\documentclass[12pt]{article}
\usepackage{amsfonts}
\usepackage{stmaryrd}
\usepackage{graphicx}
\usepackage{epsfig}
\usepackage{xcolor}
\usepackage{amsmath}
\usepackage{amsthm}
\usepackage{amssymb}
\usepackage{mathrsfs}
\usepackage{float}
\usepackage{multirow}
\usepackage{verbatim}
\usepackage{fancyhdr}
\usepackage{subcaption}
\usepackage{subcaption}
\usepackage{color}
\usepackage{mathtools}
\usepackage{mathrsfs}
\usepackage{sectsty}
\usepackage[title]{appendix}
\usepackage{threeparttable}
\usepackage{dcolumn}
\usepackage{booktabs}
\usepackage{indentfirst}
\usepackage{setspace}
\usepackage{bm}
\usepackage{enumerate}
\usepackage{lineno}
\usepackage{bbm}
\usepackage{tikz}
\usetikzlibrary{patterns, arrows.meta, decorations.pathreplacing, intersections}
\usepackage[labelfont=bf,labelsep=period]{caption}
\newcolumntype{d}[1]{D{.}{.}{#1}}
\usepackage{hyperref}
\hypersetup{colorlinks,linkcolor=blue,citecolor=blue}

\newtheorem{theorem}{Theorem}[section]
\newtheorem{example}{Example}[section]

\newtheorem{remark}{Remark}[section]

\allowdisplaybreaks

\usepackage{graphicx}

\title{An energy-based discontinuous Galerkin method for wave equations on complex geometric domains}

\author{Yangxin Fu \quad\quad  Yan Jiang \quad \quad Siyang Wang \quad\quad Lu Zhang}

\begin{document}

\maketitle

\begin{abstract}
This paper develops a high-order cut energy-based discontinuous Galerkin (CutEDG) method for second-order acoustic wave equations on complex domains and across stationary material interfaces. The method employs unfitted Cartesian meshes and addresses the small-cut-cell problem by introducing ghost-penalty stabilization directly into the local bilinear forms defining the EDG energy, while leaving the original spatial coupling and numerical-flux structure unchanged. We establish semidiscrete energy stability and a priori error estimates showing that high-order accuracy is retained. The formulation is further extended to interface problems with discontinuous wave speeds. Numerical experiments confirm the predicted convergence rates and demonstrate robustness with respect to arbitrarily small cut cells. In particular, the explicit time-step restriction is governed by the background mesh size rather than the smallest physical cut cell, and the stabilized energy Gram matrices exhibit cut-independent conditioning.
\end{abstract}

\noindent\textbf{Keywords:} Wave equation, cutEDG method, stability, high order accuracy

\noindent\textbf{AMS subject classifications 2020:} 65M60, 65M85, 65M12

% \linenumbers

\section{Introduction}

Wave propagation in geometrically complex media arises in a broad range of applications, including acoustics, seismic modeling, nondestructive testing, and wave-structure interaction; see, for example, \cite{ihlenburg1998finite,komatitsch1999introduction,schmerr2016fundamentals} and the references therein. In many of these applications, waves propagate over many wavelengths or over long time intervals, so that numerical dispersion and phase errors can accumulate significantly. High-order discretizations are therefore particularly attractive, since they can achieve a prescribed accuracy with fewer degrees of freedom per wavelength than low-order methods \cite{ainsworth2004discrete}. At the same time, practical computational domains may contain curved boundaries, embedded obstacles, or material interfaces with complicated geometry. Developing numerical methods that combine high-order accuracy with geometric flexibility is thus an important issue in computational wave propagation.

Conventional high-order finite element and discontinuous Galerkin (DG) methods typically rely on meshes that conform to the physical boundary or interface; see, e.g., \cite{sevilla2011comparison,zhang2016curved,kawecki2020finite}. Although body-fitted and curvilinear meshes are effective when a suitable mesh is available, their construction may become costly and technically demanding for complicated geometries, especially at high order. This difficulty becomes more pronounced when many geometric configurations have to be considered or when interfaces vary between simulations. Unfitted and immersed methods provide an attractive alternative \cite{muller2017high, qin2013discontinuous,bastian2009unfitted} by embedding the physical domain in a simple background mesh, often structured or Cartesian, while allowing physical boundaries and interfaces to intersect the mesh without requiring mesh conformity.

The flexibility of an unfitted mesh, however, comes with the well-known small-cut-cell problem \cite{burman2010ghost,burman2012fictitious,burman2015cutfem}. When a boundary or interface passes arbitrarily close to a mesh vertex or edge, the physical portion of a cut element can become arbitrarily small. As a consequence, discrete functions supported on such elements may receive only weak control from integration over the physical domain, leading to severe ill-conditioning and loss of uniform stability. For time-dependent wave problems, the issue is particularly important because arbitrarily small cut cells may also impose a prohibitively restrictive time-step constraint for explicit time integration. An effective unfitted wave discretization must therefore remain robust with respect to the cut configuration, including the limiting case in which the physical fraction of a cut element tends to zero.

Ghost-penalty stabilization provides a systematic mechanism for recovering such robustness and is a central ingredient of Cut Finite Element Methods (CutFEM) \cite{burman2012fictitious,burman2015cutfem}. Stabilized CutFEM formulations have been developed for elliptic, interface, and wave problems \cite{burman2012fictitious,burman2015cutfem,sticko2016stabilized}. For wave equations, higher-order extensions have demonstrated high-order accuracy and favorable explicit stability properties on unfitted meshes \cite{sticko2019higher,Weber2026}; in particular, \cite{sticko2019higher} shows numerically that the CFL restriction is essentially unaffected by the immersed boundary. Related ideas have also been extended to discontinuous finite element spaces. \cite{Gurkan2019} developed a stabilized CutDG framework for elliptic boundary value and interface problems, with error and condition-number estimates that are robust with respect to the cut configuration. Ghost-penalty-based CutDG formulations have subsequently been studied for hyperbolic problems \cite{Gurkan2020,fu2021high,fu2022high,fu2024bound}.
In particular, high-order CutDG methods for hyperbolic conservation laws have shown that explicit time stepping can be performed with a time-step restriction independent of the size of the smallest cut element \cite{fu2021high,fu2024bound}. These developments establish ghost stabilization as an effective tool for unfitted discretizations, but they do not by themselves determine how the stabilization should be incorporated into a particular wave formulation.

In this work, we consider this question within the energy-based discontinuous Galerkin (EDG) framework. DG methods are attractive for high-order wave propagation because of their local structure, high-order accuracy, and natural compatibility with explicit time integration \cite{hesthaven2008nodal}. Among the different DG formulations available for second-order wave equations (see, e.g., \cite{xing2013energy,grote2006discontinuous,shukla2022space} and reference therein), EDG is particularly well suited to the objective of the present work because its construction is organized directly around a discrete counterpart of the wave energy (see, e.g.,  \cite{appelo2015new,appelo2018energy,appelo2020energy,zhang2019energy,ren2024energy}). The mesh independent numerical fluxes can be chosen so that the semidiscrete scheme satisfies an energy-conserving or energy-dissipating relation. Thus, the energy is not only a tool for analyzing the final scheme, but also provides a natural principle for designing the discretization. This point becomes important on an unfitted mesh. In the EDG formulation, the discrete energy is associated with two local bilinear forms, one involving the gradient of the displacement variable and the other the $L^2$ inner product of the velocity variable. When the physical portion of an element becomes arbitrarily small, both forms lose uniform control of the degrees of freedom supported on that element. The small-cut problem therefore acts directly on the local forms that define the EDG energy. This suggests designing the cut stabilization at the level of these energy components, rather than transferring the mass-stiffness stabilization of continuous CutFEM directly to the EDG spatial operator.

Following this idea, we develop a high-order cut energy-based discontinuous Galerkin (CutEDG) method for the second-order acoustic wave equation on complex domains and across stationary material interfaces. Ghost-penalty terms are introduced into the two local bilinear forms associated with the discrete energy in the cut region, while no additional ghost penalty is added to the spatial coupling and numerical-flux terms of the original EDG formulation. The stabilization therefore restores control of the degrees of freedom on arbitrarily small cut cells without altering the underlying EDG flux structure. This construction preserves the energy mechanism that motivates the EDG formulation. More precisely, if $E_h(t)$ denotes the standard EDG energy on the physical domain and $D_h(t)\geq 0$ denotes the contribution of the cut stabilization, the semidiscrete CutEDG method satisfies an estimate of the form $\frac{\mathrm d}{\mathrm dt}
    \bigl(E_h(t)+D_h(t)\bigr)
    \leq 0$ (see \eqref{eq:discrete_energy_stable}). The additional term $D_h$ restores the control that is lost on arbitrarily small physical intersections, so that the augmented energy remains uniformly  meaningful with respect to the cut configuration. In this sense, the ghost stabilization extends the original EDG energy to the cut region while retaining its stability mechanism.

In this work, we establish stability and a priori error estimates for the proposed method and show that the designed high-order accuracy is retained. The formulation is further extended to stationary interface problems, where the interface may cut arbitrarily through the Cartesian background mesh. Numerical experiments demonstrate robustness with respect to decreasing cut-cell fractions and show that the explicit stability restriction is governed by the background mesh scale rather than by the smallest physical cut. We also numerically investigate the conditioning of the stabilized energy Gram matrices. 
At the polynomial orders considered, the corresponding condition numbers remain bounded with respect to the cut position and are of the same order of magnitude as, while slightly smaller than, those observed for the analogous high-order continuous CutFEM discretization, indicating favorable algebraic behavior for high-order explicit wave propagation.

The main contributions are therefore threefold. First, we develop a high-order unfitted EDG formulation for both single-domain and stationary interface wave problems on Cartesian meshes. Second, we construct a ghost stabilization tailored to the local energy form of EDG, restoring uniform control on arbitrarily small cut cells while leaving the original spatial coupling and numerical-flux structure unchanged. Third, we establish stability and high-order error estimates and demonstrate numerically cut-independent explicit stability and favorable conditioning of the stabilized discrete operators.

The remainder of this paper is organized as follows. Section \ref{sec:CutEDG method} first reviews the energy-based discontinuous Galerkin method and develops the proposed CutEDG formulation for acoustic wave equations. Stability and a priori error estimates are established on Cartesian meshes in Section \ref{eq:stability}. Section \ref{sec:interface_problem} considers stationary interface problems and presents the corresponding CutEDG formulation. Section \ref{sec:num} provides a comprehensive set of numerical experiments to verify the theoretical convergence rates and demonstrate the accuracy, stability, and robustness of the proposed method. Finally, concluding remarks are given in Section \ref{sec:concluding remarks}.

\section{The CutEDG method}\label{sec:CutEDG method}

In this section, we develop a cut-cell energy-based discontinuous Galerkin (CutEDG) method for the second-order wave equation. We first introduce the model problem and the background mesh, and then briefly review the energy-based discontinuous Galerkin (EDG) formulation of \cite{appelo2015new}. To handle the difficulties arising from arbitrarily small cut cells on unfitted meshes, we augment the EDG formulation with a ghost-penalty stabilization.

\subsection{Model problem and fitted EDG formulation}

We consider the second-order wave equation
\begin{subequations}\label{eq:wave}
\begin{align}
u_{tt} &= \Delta u + f(\mathbf{x},t),
&& \mathbf{x}\in\Omega,\quad t\in(0,T), \label{eq:wave_pde}\\
u &= 0,
&& \mathbf{x}\in\Gamma,\quad t\in[0,T], \label{eq:wave_bc}\\
u(\mathbf{x},0) &= u_0(\mathbf{x}),\qquad
u_t(\mathbf{x},0)=v_0(\mathbf{x}),
&& \mathbf{x}\in\Omega. \label{eq:wave_ic}
\end{align}
\end{subequations}
Here, $\Omega\subset\mathbb{R}^d$ is a bounded physical domain with boundary $\Gamma=\partial\Omega$. The function $f$ denotes the source term, and $u_0$ and $v_0$ are the initial displacement and velocity, respectively. 

Introducing the velocity variable $v=u_t$, the wave equation can be written as the first-order system in time
\begin{equation}\label{eq:reformulation}
\begin{aligned}
u_t &= v,\\
v_t &= \Delta u + f.
\end{aligned}
\end{equation}
We first recall the EDG formulation on a boundary-fitted mesh (see, e.g., \cite{appelo2015new}). This formulation serves as the reference discretization from which the cut-cell method will be constructed. Let $\mathcal{T}_h^{\mathrm{fit}}$ be a boundary-fitted tessellation of $\Omega$ into non-overlapping elements $K$ such that $\overline{\Omega}=\bigcup_{K\in\mathcal{T}_h^{\mathrm{fit}}}\overline{K}$. For polynomial degrees $p$ and $q$, define the approximation spaces
\begin{equation}\label{eq:approximation_space}
\begin{aligned}
U_{h,\mathrm{fit}}^{p}
&= \left\{ w_h: w_h\in\mathcal{P}^{p}(K)
\quad \forall K\in\mathcal{T}_h^{\mathrm{fit}} \right\},\\
V_{h,\mathrm{fit}}^{q}
&=\left\{w_h: w_h\in\mathcal{P}^{q}(K) \quad \forall K\in\mathcal{T}_h^{\mathrm{fit}} \right\}.
\end{aligned}
\end{equation}
Here, $\mathcal{P}^{r}(K)$ denotes the polynomial approximation space of degree $r$ on $K$. Depending on the element type, $\mathcal{P}^{r}(K)$ may denote either total-degree polynomials or the corresponding tensor-product polynomial space.

Motivated by the continuous energy of the wave equation, the EDG scheme considers the discrete energy
\begin{equation}\label{eq:discrete_energy}
E_h(t)=\frac12\sum_{K\in\mathcal{T}_h^{\mathrm{fit}}}\int_K
\left( |v_h|^2+|\nabla u_h|^2\right) \,d\mathbf{x},
\end{equation}
and constructs the spatial discretization retains such discrete energy structure. Specifically, on each element $K\in\mathcal{T}_h^{\mathrm{fit}}$, one seeks $(u_h,v_h)\in U_{h,\mathrm{fit}}^{p}\times V_{h,\mathrm{fit}}^{q}$ such that, for all $(\phi_u,\phi_v,\widetilde \phi)\in U_{h,\text{fit}}^p(K) \times  U_{h,\text{fit}}^q(K) \times U_{h,\text{fit}}^0(K)$,
\begin{subequations}\label{eq:EDG}
\begin{align}
\int_K \widetilde{\phi} \big((u_h)_t-v_h\big) \,d\mathbf{x} &=0,\label{eq:EDG_3}\\
\int_K \nabla\phi_u\cdot \nabla\big((u_h)_t-v_h\big)\,d\mathbf{x} &= \int_{\partial K} (\widehat{v_h}-v_h) \nabla\phi_u\cdot\mathbf{n}\,dS,\label{eq:EDG_1} \\
\int_K  \phi_v(v_h)_t + \nabla\phi_v\cdot\nabla u_h \,d\mathbf{x} &= \int_{\partial K} \phi_v\, \widehat{\nabla u_h}\cdot\mathbf{n} \,dS + \int_K\phi_v f\,d\mathbf{x}, \label{eq:EDG_2}.
\end{align}
\end{subequations}
Here, $\mathbf{n}$ denotes the outward unit normal to $\partial K$. To define the numerical fluxes, consider an interior face shared by two neighboring elements $K^+$ and $K^-$. For a scalar or vector-valued discrete function $w$, we denote by $w^\pm$ its traces on $F$ taken from the interior of $K^\pm$, respectively. Let $\mathbf{n}^\pm$ be the outward unit normals to $K^\pm$ on $F$, so that $\mathbf{n}^-=-\mathbf{n}^+$. For a scalar quantity $w$ and a vector quantity $\mathbf q$, we define the corresponding normal jumps by
\begin{equation}\label{eq:jump_definition}
[\![w]\!]=w^+\mathbf{n}^+ + w^-\mathbf{n}^-, \qquad
[\![\mathbf{q}]\!] =  \mathbf{q}^+\cdot\mathbf{n}^+ + \mathbf{q}^-\cdot\mathbf{n}^-.
\end{equation}
In particular,
\[[\![\nabla u_h]\!]=\nabla u_h^+\cdot\mathbf{n}^+ + \nabla u_h^-\cdot\mathbf{n}^-.\]
The numerical fluxes on an interior face are taken as
\begin{equation}\label{eq:EDG_fluxes}
\begin{aligned}
\widehat{v_h} &= \alpha v_h^+ + (1-\alpha)v_h^- - \tau[\![\nabla u_h]\!], \\
\widehat{\nabla u_h} &= (1-\alpha)\nabla u_h^+ + \alpha\nabla u_h^- - \beta[\![v_h]\!].
\end{aligned}
\end{equation}
The parameters $\alpha$, $\tau$, and $\beta$ determine the particular choice of numerical flux. On the physical boundary $\Gamma$, the boundary numerical fluxes are defined consistently with the prescribed homogeneous Dirichlet data, namely,
\begin{equation}\label{eq:flux_physical_bdry}
\widehat{v_h} = 0, \qquad \widehat{\nabla u_h} = \nabla u_h.
\end{equation}

We note that \eqref{eq:EDG_1} controls only the gradient of $(u_h)_t-v_h$ and therefore leaves its elementwise constant component undetermined. The additional relation \eqref{eq:EDG_3} determines this constant mode by enforcing the corresponding elementwise mean condition. Thus, \eqref{eq:EDG_1} and \eqref{eq:EDG_3} together provide a complete discrete representation of the relation $u_t=v$.

The formulation above assumes that the computational mesh conforms to the physical boundary. We next remove this restriction by embedding $\Omega$ into a Cartesian background mesh and restricting the physical integrals to the portions of the background cells lying inside $\Omega$.

\subsection{Background mesh and cut-cell geometry}

Let $\widetilde{\mathcal T}_h$ be a Cartesian background mesh covering the physical domain $\Omega$. The mesh is constructed independently of the physical boundary $\Gamma=\partial\Omega$, so that $\Gamma$ may intersect the background cells without conforming to their faces. We denote a generic background cell by $K$ and define
\[h_K=\operatorname{diam}(K), \qquad h=\max_{K\in\widetilde{\mathcal T}_h}h_K.\]

The discretization is supported only on those background cells having a nonzero intersection with the physical domain. This defines the active mesh and its associated computational domain,
\begin{equation}\label{eq:active_mesh}
\mathcal T_h = \left\{ K\in\widetilde{\mathcal T}_h:|K\cap\Omega|>0 \right\},\qquad \Omega_h = \bigcup_{K\in\mathcal T_h}K.
\end{equation}
For each $K\in\mathcal T_h$, we denote its physical part by
\begin{equation}\label{eq:physical_cell}
K_\Omega:=K\cap\Omega.
\end{equation}
Thus, although the discrete functions are represented on the full background cell $K$, the physical-domain contributions are evaluated only over $K_\Omega$. The active mesh $\mathcal T_h$ is naturally decomposed according to the interaction of its cells with the physical boundary. We define
\begin{equation}\label{eq:interior_cut_cells}
\mathcal T_h^{\mathrm{int}} = \left\{ K\in\mathcal T_h:
\overline K\subset\Omega\right\},
\qquad \mathcal T_h^\Gamma =  \mathcal T_h\setminus\mathcal T_h^{\mathrm{int}}.
\end{equation}
Cells in $\mathcal T_h^{\mathrm{int}}$ are unaffected by the geometric cut and satisfy $K_\Omega=K$, whereas cells in $\mathcal T_h^\Gamma$ are intersected by the physical boundary and generally satisfy $K_\Omega\subsetneq K$. See Figure \ref{fig:Illustration_2d} for an illustration in 2D.

\begin{figure}[htbp]
\centering
\begin{tikzpicture}[scale=0.9]
    % 网格参数：8×6 个单元
    \def\nx{8}
    \def\ny{6}
    \pgfmathsetmacro{\dx}{6/\nx}
    \pgfmathsetmacro{\dy}{4/\ny}
    % ---------- 1. 背景网格（灰色虚线）
    \foreach \i in {0,...,\nx} {
        \draw[gray!50, dashed, thin] (\i*\dx,0) -- (\i*\dx,\ny*\dy);
    }
    \foreach \j in {0,...,\ny} {
        \draw[gray!50, dashed, thin] (0,\j*\dy) -- (\nx*\dx,\j*\dy);
    }
    % ---------- 2. 子区域填充 ----------
    % 圆心和半径
    \def\cx{3.0}
    \def\cy{2.0}
    \def\R{1.8}
    
    % 先填充 Ω2（外部）为极淡灰色，以示区分
    \fill[gray!5, opacity=0.2] (0,0) rectangle (\nx*\dx,\ny*\dy);
    % 再填充 Ω1（内部）为淡蓝色，覆盖 Ω2
    \fill[blue!10, opacity=0.3, even odd rule] (\cx,\cy) circle (\R);
    
    % 单独填充切割单元（使其颜色与 Ω1 一致）
    \foreach \i/\j in {2/2,2/3,3/1,3/2,3/3,3/4,4/1,4/2,4/3,4/4,5/2,5/3} {
        \pgfmathsetmacro{\x}{\i*\dx}
        \pgfmathsetmacro{\y}{\j*\dy}
        \fill[blue!10, opacity=0.3] (\x,\y) rectangle (\x+\dx,\y+\dy);
    }
    % ---------- 5. 高亮切割单元（斜线 + 粗边框） ----------
    \foreach \i/\j in {2/2,2/3,3/1,3/2,3/3,3/4,4/1,4/2,4/3,4/4,5/2,5/3} {
        \pgfmathsetmacro{\x}{\i*\dx}
        \pgfmathsetmacro{\y}{\j*\dy}
        \fill[pattern=north east lines, pattern color=blue!30] (\x,\y) rectangle (\x+\dx,\y+\dy);
        % \draw[thick, blue!60] (\x,\y) rectangle (\x+\dx,\y+\dy);
    }

    % 高亮\Omega2区域
    \foreach \i/\j in {0/0,0/1,0/2,0/3,0/4,0/5,1/0,1/5,6/0,6/5,7/0,7/1,7/2,7/3,7/4,7/5} {
        \pgfmathsetmacro{\x}{\i*\dx}
        \pgfmathsetmacro{\y}{\j*\dy}
        \fill[pattern=north east lines, pattern color=gray!30] (\x,\y) rectangle (\x+\dx,\y+\dy);
        % \draw[thick, gray!60] (\x,\y) rectangle (\x+\dx,\y+\dy);
    }

    % 高亮相交区域
    \foreach \i/\j in {1/1,1/2,1/3,1/4,2/0,2/1,2/4,2/5,3/0,3/5,4/0,4/5,5/0,5/1,5/4,5/5,6/1,6/2,6/3,6/4} {
        \pgfmathsetmacro{\x}{\i*\dx}
        \pgfmathsetmacro{\y}{\j*\dy}
        \fill[pattern=north east lines, pattern color=green!40] (\x,\y) rectangle (\x+\dx,\y+\dy);
        \draw[thick, green!60] (\x,\y) rectangle (\x+\dx,\y+\dy);
    }

    % ---------- 3. 绘制界面 Γ（红色实线）
    \draw[thick, red] (\cx,\cy) circle (\R) node[above right, xshift=0.8cm, yshift=0.7cm, black] {$\Gamma$};

     % ---------- 4. 标注子区域 ----------
    \node at (\cx-0.2,\cy-0.2) {$\Omega$};   % 内部
    \node at (5.0,3.5) {$\widetilde{\mathcal{T}}_h$};     
    % ---------- 7. 坐标轴 ----
    \draw[->] (-0.5,0) -- (6.8,0) node[right] {$x$};
    \draw[->] (0,-0.5) -- (0,4.8) node[above] {$y$};
    
\end{tikzpicture}
\caption{Illustration of an unfitted two-dimensional discretization.
The physical domain $\Omega$ is bounded by $\Gamma$ and embedded in the
Cartesian background mesh $\widetilde{\mathcal{T}}_h$. The blue cells
are the interior cells fully contained in $\Omega$, whereas the green
cells are the cut cells intersected by $\Gamma$, denoted as $\mathcal{T}_h^{\mathrm{int}}$ and $\mathcal{T}^{\Gamma}_h$ respectively. The union of the
interior and cut cells forms the computational mesh $\mathcal{T}_h$ used for the
discretization of $\Omega$. }
\label{fig:Illustration_2d}
\end{figure}

To distinguish the interfaces associated with the background mesh from those induced by the physical boundary, let
\begin{equation}\label{eq:background_interior_faces}
\mathcal F_h^{\mathrm{int}} = \left\{ F=\partial K^+\cap\partial K^-: K^\pm\in\mathcal T_h\,; \,\operatorname{meas}_{d-1}(F)>0 \right\}
\end{equation}
be the set of interior faces of the active background mesh. Only the parts of these faces lying inside $\Omega$ enter the physical EDG fluxes; we therefore introduce
\begin{equation}\label{eq:physical_interior_faces}
\mathcal F_h^\Omega = \left\{ F\cap\Omega: F\in\mathcal F_h^{\mathrm{int}}\, ; \, \operatorname{meas}_{d-1}(F\cap\Omega)>0 \right\}.
\end{equation}
The portions of the physical boundary contained in the cut cells are collected in
\begin{equation}\label{eq:physical_boundary_segments}
\mathcal F_h^\Gamma =  \left\{ K\cap\Gamma:
K\in\mathcal T_h^\Gamma\,; \, \operatorname{meas}_{d-1}(K\cap\Gamma)>0
\right\}.
\end{equation}
Hence, three geometric objects play distinct roles in the CutEDG formulation: $K_\Omega$ determines the physical volume integrals, $\mathcal F_h^\Omega$ determines the numerical fluxes between neighboring active cells, and $\mathcal F_h^\Gamma$ represents the true physical boundary on which the boundary condition is imposed. This distinction will also be useful when introducing the ghost-penalty stabilization, which is defined instead on selected \emph{full} background faces.

\subsection{CutEDG discretization}

We now extend the fitted EDG formulation \eqref{eq:EDG} to the active background mesh $\mathcal{T}_h$. The discrete functions are represented by polynomials on the entire background cells, while the physical volume and boundary integrals are restricted to the intersections with $\Omega$. We define the approximation spaces on the active mesh by
\begin{equation}\label{eq:cut_finite_element_spaces}
\begin{aligned}
U_h^{p} &= \left\{ w_h: w_h\in\mathcal{P}^{p}(K) \quad \forall K\in\mathcal{T}_h \right\},\\
V_h^{q} &= \left\{ w_h: w_h\in\mathcal{P}^{q}(K) \quad \forall K\in\mathcal{T}_h \right\}.
\end{aligned}
\end{equation}
The unstabilized CutEDG formulation is obtained by replacing each fitted element $K$ in \eqref{eq:EDG} by its physical portion $K_\Omega$ defined in \eqref{eq:physical_cell}. Thus, for every $K\in\mathcal{T}_h$, we seek $(u_h,v_h)\in U_h^p\times V_h^q$ such that, for all $(\phi_u,\phi_v,\widetilde \phi)\in  U_h^p \times  V_h^q \times U_h^{0}$,
\begin{subequations}\label{eq:cutEDG_unstabilized}
\begin{align}
\int_{K_\Omega} \widetilde{\phi} \big((u_h)_t-v_h\big) \,d\mathbf{x} &=0, \label{eq:cutEDG_mean} \\
\int_{K_\Omega} \nabla\phi_u\cdot \nabla\big((u_h)_t-v_h\big)
\,d\mathbf{x} &= \int_{\partial K_\Omega} (\widehat{v_h}-v_h) \nabla\phi_u\cdot\mathbf{n}_{\Omega,K}\,dS, \label{eq:cutEDG_u} \\
\int_{K_\Omega} \phi_v(v_h)_t + \nabla\phi_v\cdot\nabla u_h  \,d\mathbf{x} &= \int_{\partial K_\Omega} \phi_v \widehat{\nabla u_h} \cdot\mathbf{n}_{\Omega,K} \,dS + \int_{K_\Omega}\phi_v f\,d\mathbf{x}, \label{eq:cutEDG_v}.
\end{align}
\end{subequations}
Here, $\mathbf{n}_{\Omega,K}$ denotes the outward unit normal to the physical cell $K_\Omega$. On a physical interior face belonging to $\mathcal{F}_h^\Omega$, the numerical fluxes are inherited directly from the fitted EDG formulation. On a boundary segment belonging to $\mathcal{F}_h^\Gamma$, the numerical fluxes are replaced by the corresponding boundary fluxes used to impose the prescribed homogeneous Dirichlet condition.

For an interior cell $K\in\mathcal{T}_h^{\mathrm{int}}$, one has $K_\Omega=K$. Consequently, the local CutEDG equations reduce exactly to the fitted EDG equations. The distinction arises only in the vicinity of the physical boundary, where the measure of $K_\Omega$ may be arbitrarily small relative to that of the background cell $K$.

\subsection{Small-cut-cell problem and ghost-penalty stabilization}

To characterize the position of the physical boundary within an active cell, we define the volume fraction
\begin{equation}\label{eq:cut_fraction}
\eta_K = \frac{|K_\Omega|}{|K|}, \qquad K\in\mathcal{T}_h.
\end{equation}
For an interior cell, $\eta_K=1$, whereas for a cut cell $\eta_K\in(0,1)$ and may become arbitrarily small as the location of $\Gamma$ changes relative to the background grid. When $\eta_K \ll 1$, the physical volume contributions associated with $K$ become small, even though the discrete polynomial on $K$ is still represented by degrees of freedom defined over the full background cell. As a consequence, the discrete system \eqref{eq:cutEDG_unstabilized} may lose uniform control of modes supported predominantly in the fictitious portion $K\setminus\Omega$. This is the familiar small-cut-cell problem and may lead to severe deterioration of the conditioning of the resulting algebraic system.

To restore control in the cut region, motivated by \cite{sticko2019higher}, we introduce a ghost-penalty stabilization on background faces adjacent to cut cells. Define
\begin{equation}\label{eq:ghost_faces}
\mathcal{F}_h^g = \left\{ F=\partial K^+\cap\partial K^- \in\mathcal{F}_h^{\mathrm{int}}: K^+\in\mathcal{T}_h^\Gamma
\text{ or } K^-\in\mathcal{T}_h^\Gamma \right\}.
\end{equation}
An important distinction is that the stabilization is integrated over the full background face $F$, rather than only over its physical portion $\Omega_F = F\cap\Omega$. 
The ghost penalty therefore transfers control from the physical domain into the neighboring fictitious parts of the active mesh. Precisely, for a polynomial degree $r$, define
\begin{equation}\label{eq:ghost_penalty}
J_r(w,z) = \sum_{F\in\mathcal{F}_h^g} \sum_{\ell=0}^{r} \omega_\ell
\frac{h^{2\ell+1}} {(2\ell+1)(\ell!)^2} \int_F [\![\partial_{n_F}^{\ell}w]\!]\, [\![\partial_{n_F}^{\ell}z]\!] \,dS,
\end{equation}
where $n_F$ is a fixed unit normal associated with $F$, $\partial_{n_F}^{\ell}$ denotes the $\ell$th-order directional derivative in the $n_F$ direction, and $\omega_\ell>0$ is a stabilization weight. Since the approximation spaces are discontinuous, the contribution with $\ell=0$ is retained and penalizes jumps of the discrete functions themselves.

Because $J_r(\cdot,\cdot)$ is defined as a sum over background faces, it is most naturally interpreted as a global stabilization form. Accordingly, the stabilized CutEDG formulation is obtained by adding the appropriate ghost-penalty contributions to the assembled CutEDG equations. In abstract form, the two evolution equations may be written as 
\begin{equation}\label{eq:cutEDG_stabilized_1}
\begin{aligned}
&\sum_{K\in\mathcal{T}_h} \int_{K_\Omega} \nabla\phi_u\cdot
\nabla\big((u_h)_t-v_h\big) \,d\mathbf{x} + \gamma_M^{u}h^{-2} J_p\big((u_h)_t,\phi_u\big) \\
= &\sum_{K\in\mathcal{T}_h} \int_{\partial K_\Omega} (\widehat{v_h}-v_h) \nabla\phi_u\cdot\mathbf{n}_{\Omega,K} \,dS,
\end{aligned}
\end{equation}
and
\begin{equation}\label{eq:cutEDG_stabilized_2}
\begin{aligned}
&\sum_{K\in\mathcal{T}_h} \int_{K_\Omega}  \left( \phi_v(v_h)_t + \nabla\phi_v\cdot\nabla u_h \right) \,d\mathbf{x} + \gamma_M^{v} J_q\big((v_h)_t,\phi_v\big) \\
= &\sum_{K\in\mathcal{T}_h} \int_{\partial K_\Omega} \phi_v \widehat{\nabla u_h}\cdot\mathbf{n}_{\Omega,K} \,dS + \sum_{K\in\mathcal{T}_h} \int_{K_\Omega} \phi_vf\,d\mathbf{x}.
\end{aligned}
\end{equation}
The element-wise mean condition associated with the relation $u_t=v$ is imposed in the stabilized formulation as 
\begin{equation}\label{eq:cutEDG_stabilized_3}
\sum_{K\in \mathcal T_h} \int_{K_\Omega} h^{-2} \widetilde \phi\big((u_h)_t - v_h\big)\,d{\bf x} + \gamma_M^{u}h^{-2} J_p\big((u_h)_t,\widetilde\phi\big) = 0.
\end{equation}
We note that the global form \eqref{eq:cutEDG_stabilized_1}--\eqref{eq:cutEDG_stabilized_3} makes explicit that each ghost face contributes only once to the assembled system. Nevertheless, the resulting couplings remain local: the physical volume terms involve individual physical cells $K_\Omega$, the numerical fluxes couple neighboring elements across the relevant portions of $\partial K_\Omega$, and the ghost-penalty terms couple only pairs of elements sharing a face in $\mathcal F_h^g$. Consequently, cells away from the cut region receive no ghost-penalty contribution, and their discretization coincides with the fitted EDG scheme.

\begin{remark}
The gradient equation does not control the elementwise constant component of $(u_h)_t-v_h$. The mean-value condition is therefore a complementary algebraic constraint required to determine this otherwise undetermined mode. To incorporate this constraint consistently into the stabilized formulation, the mean equation \eqref{eq:cutEDG_mean} is first scaled by $h^{-2}$. A ghost-penalty term acting on $(u_h)_t$ is then added with the same $h^{-2}$ scaling as in \eqref{eq:cutEDG_stabilized_1}. This scaling places the mean constraint on the same dimensional level as the gradient equation and provides additional control of the constant mode in the cut region (see \eqref{eq:cutEDG_stabilized_3}). Nonetheless, since $\widetilde \phi$ is piecewise constant, only the zeroth-order contribution of $J_p\big((u_h)_t,\widetilde\phi\big)$ is nonzero.
\end{remark}

\begin{remark}
The ghost penalty introduces additional coupling only across faces in $\mathcal F_h^g$. Hence, the coupling induced by the cut-cell stabilization is confined to a neighborhood of the physical boundary. Away from this region, the discrete operator retains the same element-local structure as the fitted EDG method. In particular, the ghost penalty modifies only the part of the discrete system associated with cut cells and their neighboring elements.
\end{remark}

\begin{remark}
The CutEDG formulation \eqref{eq:cutEDG_unstabilized} inherits its underlying structure from the EDG discretization \eqref{eq:EDG}. In particular, the physical-domain terms retain the same energy structure as in the fitted formulation. The ghost-penalty terms introduce additional stabilization in the cut region, and their contribution to the discrete energy balance depends on the particular stabilization operators and numerical fluxes. The resulting energy properties of the stabilized CutEDG formulation are therefore analyzed separately below.
\end{remark}

\section{Stability and error analysis}\label{eq:stability}

In this section, we analyze the stability and convergence of the proposed CutEDG method \eqref{eq:cutEDG_stabilized_1}--\eqref{eq:cutEDG_stabilized_3}. We first establish a semi-discrete energy estimate and then derive an a priori error bound.

\subsection{Discrete energy stability}

We begin with the homogeneous problem, for which the source term and Dirichlet boundary data vanish. The main stability result is stated below.

\begin{theorem}
\label{thm:semi_discrete_stability}
Assume that $f=0$, and let $\tau,\beta\geq0$. Then the semi-discrete CutEDG solution obtained from \eqref{eq:cutEDG_stabilized_1}--\eqref{eq:cutEDG_stabilized_3} satisfies
\begin{equation}\label{eq:discrete_energy_stable}
\frac{d}{dt}\left(E_h(t) + \gamma_M^{(u)}h^{-2}J_p(u_h,u_h) +\gamma_M^{(v)}J_q(v_h,v_h)\right)\leq0,
\end{equation}
where the discrete energy is defined by
\begin{equation} \label{eq:discrete_energy}
E_h(t) = \frac{1}{2} \sum_{K\in\mathcal{T}_h} \int_{K_\Omega}|\nabla u_h|^2+|v_h|^2\,d\mathbf{x}.
\end{equation}
\end{theorem}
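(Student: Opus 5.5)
The plan is the standard EDG energy argument: choose test functions so that the volume terms telescope into $\frac{d}{dt}$ of the energy, and then show that the surviving face terms are nonpositive. The weight-one form of \eqref{eq:discrete_energy_stable} is not reached at the end of the plan. What the argument actually delivers is the same inequality with the ghost-penalty terms halved; this is explained in the last paragraph.

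\emph{Step 1 (test functions).} Set $\phi_u=u_h$ in \eqref{eq:cutEDG_stabilized_1} and $\phi_v=v_h$ in \eqref{eq:cutEDG_stabilized_2}, and add the two equations. Using the bilinearity and symmetry of $J_r$,
\[
\int_{K_\Omega}\nabla u_h\cdot\nabla (u_h)_t\,d\mathbf{x}=\frac12\frac{d}{dt}\int_{K_\Omega}|\nabla u_h|^2\,d\mathbf{x},\qquad J_p\big((u_h)_t,u_h\big)=\frac12\frac{d}{dt}J_p(u_h,u_h),
\]
and likewise for $v_h$ and $J_q$. The mixed volume terms $-\int_{K_\Omega}\nabla u_h\cdot\nabla v_h$ and $+\int_{K_\Omega}\nabla v_h\cdot\nabla u_h$ cancel. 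This leaves
\[
\frac{d}{dt}\Big(E_h+\tfrac12\gamma_M^{(u)}h^{-2}J_p(u_h,u_h)+\tfrac12\gamma_M^{(v)}J_q(v_h,v_h)\Big)=\mathcal B_h,
\]
where $\mathcal B_h$ collects all terms over $\partial K_\Omega$. The mean equation \eqref{eq:cutEDG_stabilized_3} is not used here. It is needed only to determine $(u_h)_t$ uniquely, and it is compatible with the identity above because $J_p(\cdot,\widetilde\phi)$ involves only the $\ell=0$ jump.

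\emph{Step 2 (sign of $\mathcal B_h$).} This is routine and follows the fitted EDG analysis of \cite{appelo2015new}. Because $\mathcal B_h$ consists only of integrals over physical faces, the ghost terms never enter it.
\begin{itemize}
\item On a boundary segment in $\mathcal F_h^\Gamma$, the fluxes \eqref{eq:flux_physical_bdry} give the contribution $-v_h\nabla u_h\cdot\mathbf n+v_h\nabla u_h\cdot\mathbf n=0$.
\item On a physical interior face in $\mathcal F_h^\Omega$, the fluxes \eqref{eq:EDG_fluxes} reduce the sum of the two sides to
\[
-\tau\,[\![\nabla u_h]\!]^2-\beta\,|[\![v_h]\!]|^2\le0,
\]
since $\tau,\beta\ge0$. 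The $\alpha$-dependent parts cancel exactly, as in the fitted case.
\end{itemize}
Hence $\mathcal B_h\le0$.

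\emph{Step 3 (the factor $\tfrac12$).} Steps 1--2 prove monotonicity of $E_h+\tfrac12\gamma_M^{(u)}h^{-2}J_p(u_h,u_h)+\tfrac12\gamma_M^{(v)}J_q(v_h,v_h)$, not of the quantity in \eqref{eq:discrete_energy_stable}. The stated inequality with weight one is strictly stronger and does not follow. Since $\frac{d}{dt}J_p(u_h,u_h)=2J_p\big((u_h)_t,u_h\big)$, one would additionally need
\[
\gamma_M^{(u)}h^{-2}J_p\big((u_h)_t,u_h\big)+\gamma_M^{(v)}J_q\big((v_h)_t,v_h\big)\le-\mathcal B_h.
\]
These ghost terms have no sign, and neither the flux dissipation nor the cross-coupling controls them.

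I see two ways to recover \eqref{eq:discrete_energy_stable} literally, and neither is a proof of the statement as worded.
\begin{itemize}
\item Normalize $J_r$ in the energy with the same $\tfrac12$ that appears in $E_h$. This is a convention change in the statement.
\item Replace $\gamma_M$ by $2\gamma_M$ in \eqref{eq:cutEDG_stabilized_1}--\eqref{eq:cutEDG_stabilized_3}. This changes the scheme.
\end{itemize}
Apart from this factor, the substantive content, namely energy decay with a nonnegative stabilization contribution $D_h$ that restores control on small cut cells, is exactly what Steps 1--2 establish.
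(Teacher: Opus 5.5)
Your Steps 1--2 are the paper's proof: test with $\phi_u=u_h$, $\phi_v=v_h$, use the symmetry of $J_r$, cancel the cross volume terms, and reduce the face terms to $-\tau[\![\nabla u_h]\!]^2-\beta|[\![v_h]\!]|^2$ on $\mathcal F_h^\Omega$ and to $0$ on $\mathcal F_h^\Gamma$. Your factor-$\tfrac12$ objection is also correct, and the slip is in the paper itself. The paper writes $J_p((u_h)_t,u_h)=\tfrac12\tfrac{d}{dt}J_p(u_h,u_h)$ and then drops the $\tfrac12$, so its argument actually proves your identity, with $\tfrac12$ in front of the ghost terms.

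You can sharpen ``does not follow'' to ``false in general''. Take $\tau=\beta=0$ and write $D_h:=\gamma_M^{(u)}h^{-2}J_p(u_h,u_h)+\gamma_M^{(v)}J_q(v_h,v_h)$. Your identity then gives $\tfrac{d}{dt}(E_h+D_h)=\tfrac12\tfrac{d}{dt}D_h=\gamma_M^{(u)}h^{-2}J_p((u_h)_t,u_h)+\gamma_M^{(v)}J_q((v_h)_t,v_h)$. In this case $(u_h)_t$ depends linearly on $v_h$ alone and $(v_h)_t$ on $u_h$ alone, so this quantity is odd under $u_h(0)\mapsto-u_h(0)$. The weight-one inequality can therefore hold for all initial data only if $D_h$ is conserved by itself, which there is no reason to expect. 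The correctly stated theorem is the one you prove.
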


\begin{proof}
By the construction \eqref{eq:ghost_penalty}, the ghost-penalty forms are symmetric and nonnegative,
\[J_p(w,w)\geq0, \qquad J_q(w,w)\geq0.\]
It therefore follows directly from \eqref{eq:discrete_energy} that
$E_h(t)\geq0$.

For each $K\in\mathcal{T}_h$, we choose $\phi_u=u_h$ and $\phi_v=v_h$ in the stabilized global CutEDG formulation \eqref{eq:cutEDG_stabilized_1}--\eqref{eq:cutEDG_stabilized_2}, and sum the resulting equations. By symmetry of the ghost-penalty forms,
\begin{equation}
J_p((u_h)_t,u_h) = \frac{1}{2}\frac{d}{dt}J_p(u_h,u_h),\quad 
J_q((v_h)_t,v_h) = \frac{1}{2}\frac{d}{dt}J_q(v_h,v_h).
\end{equation}
The volume coupling terms cancel after summation, giving
\begin{equation}
\begin{aligned}
&\frac{d}{dt}\left(E_h(t)+\gamma_M^{(u)}h^{-2}J_p(u_h,u_h) +\gamma_M^{(v)}J_q(v_h,v_h)\right) \\
= &\sum_{K\in\mathcal{T}_h} \int_{\partial K_\Omega} (\widehat{v_h}-v_h) \nabla u_h\cdot\mathbf{n} + \widehat{\nabla u_h}\cdot\mathbf{n} v_h \,dS.
\end{aligned}
\end{equation}
Using the numerical fluxes defined in \eqref{eq:EDG_fluxes} and combining the contributions from the two elements sharing each physical interior face yields
\begin{equation} \label{eq:energy_dissipation}
\begin{aligned}
&\frac{d}{dt}\left(E_h(t) + \gamma_M^{(u)}h^{-2}J_p(u_h,u_h) +\gamma_M^{(v)}J_q(v_h,v_h)\right) \\
= &- \sum_{F\in\mathcal{F}_h^\Omega} \int_F
\tau [\![\nabla u_h]\!]^2\,dS + \beta |[\![ v_h]\!]|^2\,dS.
\end{aligned}
\end{equation}
Since $\tau,\beta\geq0$, we obtain \eqref{eq:discrete_energy_stable}, which completes the proof.
\end{proof}

\subsection{A priori error estimate}

We next derive an a priori error estimate for the semi-discrete CutEDG approximation \eqref{eq:cutEDG_stabilized_1}--\eqref{eq:cutEDG_stabilized_3}. For the projection argument below, we assume that the exact solution admits sufficiently smooth extensions to a neighborhood containing the active domain $\Omega_h$, and we use the same notation for these extensions.

\begin{theorem}\label{thm:error_estimate}
Consider the homogeneous problem $f=0$. Assume that
\[u,u_t\in H^{p+1}(\Omega),\qquad v,v_t\in H^{q+1}(\Omega),\]
with the corresponding norms uniformly bounded for $t\in[0,T]$. Suppose further that
\begin{equation}\label{eq:degree}
p-2\leq q\leq p
\end{equation}
and that the initial discrete solution is chosen using the projections introduced below. Then
\begin{equation}
\label{eq:error_estimate}
\begin{aligned}
\|\nabla(u-u_h)\|_{L^2(\Omega)}^2 + \|v-v_h\|_{L^2(\Omega)}^2 \lesssim\;& h^{2\gamma} \left( |u|_{H^{p+1}(\Omega)}^2 + |v|_{H^{q+1}(\Omega)}^2 \right)\\
&+h^{2p}|u_t|_{H^{p+1}(\Omega)}^2+h^{2(q+1)}|v_t|_{H^{q+1}(\Omega)}^2,
\end{aligned}
\end{equation}
where $\gamma=\min\{p',q'\}$, with
\begin{equation}
\label{eq:pq_prime}
p'=\begin{cases}
p-1, & \tau=0,\\
p-\dfrac{1}{2}, & \tau>0,
\end{cases} \qquad q' = \begin{cases} q, & \beta=0,\\
q+\dfrac{1}{2}, & \beta>0.
\end{cases}
\end{equation}
Here and below, $A\lesssim B$ means that
$A\leq CB$ for a constant $C>0$ independent of $h$.
\end{theorem}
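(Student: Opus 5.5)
The plan is the standard EDG error argument of \cite{appelo2015new}, lifted to the cut setting by replacing the energy with the augmented energy of Theorem~\ref{thm:semi_discrete_stability}. We split the errors as
\[
u-u_h=(u-\pi_u u)+(\pi_u u-u_h)=:\eta_u+e_u,\qquad v-v_h=(v-\pi_v v)+(\pi_v v-v_h)=:\eta_v+e_v,
\]
where $\pi_u$ and $\pi_v$ are elementwise projections onto $U_h^p$ and $V_h^q$ applied to the smooth extensions on the full background cells $K$. We take $\pi_v$ to be the $L^2(K)$ projection. We take $\pi_u$ to be the projection matching the elementwise mean and the $H^1$ seminorm on $K$, which is natural for \eqref{eq:cutEDG_stabilized_1} and \eqref{eq:cutEDG_stabilized_3}. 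The initial data are $u_h(0)=\pi_u u_0$ and $v_h(0)=\pi_v v_0$, so $e_u(0)=e_v(0)=0$.

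Standard approximation bounds on $K$, together with trace inequalities on $\partial K_\Omega\subset$ (faces of $K$) $\cup\,(K\cap\Gamma)$, give
\[
\|\nabla\eta_u\|_{L^2(K)}\lesssim h^p|u|_{H^{p+1}(K)},\qquad \|\eta_v\|_{L^2(K)}\lesssim h^{q+1}|v|_{H^{q+1}(K)}.
\]
The trace bounds add a factor $h^{-1/2}$ on faces, and this must be uniform with respect to the cut. Using the trace inequality for a cut boundary piece $K\cap\Gamma$ posed on the full cell $K$ makes it independent of $\eta_K$. The ghost penalty contributions of the projection errors are also needed. The exact solution is smooth, so $[\![\partial^\ell_{n_F}u]\!]=0$ on each $F\in\mathcal F_h^g$. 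Hence $J_p(\eta_u,\eta_u)\lesssim h^{2p+2}|u|^2_{H^{p+1}}$ and $J_q(\eta_v,\eta_v)\lesssim h^{2q+2}|v|^2_{H^{q+1}}$, and the scaling $h^{-2}J_p$ gives $h^{2p}$.

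The next step is the error equation. The exact solution satisfies the unstabilized scheme \eqref{eq:cutEDG_unstabilized} with consistent fluxes. Subtracting the stabilized scheme \eqref{eq:cutEDG_stabilized_1}--\eqref{eq:cutEDG_stabilized_3}, we get equations for $(e_u,e_v)$ that have the same structure as the scheme plus residuals. These residuals are the volume terms in $\eta$ (time derivatives such as $\nabla\partial_t\eta_u$ and $\partial_t\eta_v$), the flux terms in $\eta$ on $\mathcal F_h^\Omega\cup\mathcal F_h^\Gamma$, and the ghost penalty inconsistency terms $\gamma^u_M h^{-2}J_p(\partial_t\pi_u u,\cdot)$ and $\gamma^v_M J_q(\partial_t\pi_v v,\cdot)$. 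The last ones equal $J(\partial_t\eta,\cdot)$ up to sign by the smoothness just noted.

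Testing with $\phi_u=e_u$, $\phi_v=e_v$, $\widetilde\phi$ the mean of $e_u$, and repeating the computation of Theorem~\ref{thm:semi_discrete_stability}, gives
\[
\frac{d}{dt}\mathcal E(t)+\sum_{F\in\mathcal F_h^\Omega}\int_F\tau[\![\nabla e_u]\!]^2+\beta|[\![e_v]\!]|^2\,dS=\mathcal R(t),
\]
where $\mathcal E=E_h(e_u,e_v)+\gamma^u_Mh^{-2}J_p(e_u,e_u)+\gamma^v_MJ_q(e_v,e_v)$. The volume and ghost residuals are bounded by Cauchy--Schwarz against $\mathcal E^{1/2}$, since the ghost forms are part of $\mathcal E$. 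They produce the terms $h^{2p}|u_t|^2_{H^{p+1}}$ and $h^{2(q+1)}|v_t|^2_{H^{q+1}}$. Here $\pi_v$ being the $L^2$ projection on $K$ only kills $\partial_t\eta_v$ against $e_v$ on $K$ and not on $K_\Omega$, so we bound this term directly rather than using orthogonality. The same applies to the mean constraint.

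The main obstacle is the flux residuals, e.g.\ $\int_F(\widehat{\eta_v}-\eta_v)\nabla e_u\cdot n$ and $\int_F\widehat{\nabla\eta_u}\cdot n\,e_v$. These must be controlled by $\mathcal E$ and the dissipation, uniformly in the cut. There are two cases.
\begin{itemize}
\item When $\tau>0$ (resp.\ $\beta>0$), we absorb the jump parts into the dissipation, which loses only $h^{1/2}$ from the trace of $\eta$. This gives $p'=p-\tfrac12$ and $q'=q+\tfrac12$.
\item Otherwise we use an inverse trace inequality $\|\nabla e_u\|_{L^2(F\cap\Omega)}\lesssim h^{-1/2}\|\nabla e_u\|_{L^2(K)}$ on the \emph{full} cell. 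This loses a full power and gives $p-1$ and $q$.
\end{itemize}
The crux is the cut-independent norm equivalence
\[
\|\nabla w\|^2_{L^2(\Omega_h)}\lesssim\|\nabla w\|^2_{L^2(\Omega)}+h^{-2}J_p(w,w),\qquad \|w\|^2_{L^2(\Omega_h)}\lesssim\|w\|^2_{L^2(\Omega)}+J_q(w,w),
\]
which converts full cell norms back into $\mathcal E$. We prove it by the standard ghost penalty argument: a Taylor expansion across each face $F\in\mathcal F_h^g$ with the weights $h^{2\ell+1}/((2\ell+1)(\ell!)^2)$ is exact for polynomials, and we walk from each cut cell to an interior neighbor in a bounded number of steps. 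The condition $p-2\le q\le p$ enters when balancing the $\widehat{\nabla u_h}$-residual (order $p$) against the $e_v$ terms and the $\widehat{v_h}$-residual (order $q+1$) against $\nabla e_u$. This ensures that the mixed terms are dominated by $h^{\gamma}$. Finally, Gronwall on $[0,T]$ with $\mathcal E(0)=0$ bounds $\mathcal E(t)$ by the right-hand side of \eqref{eq:error_estimate}, and the triangle inequality with the projection bounds completes the proof.
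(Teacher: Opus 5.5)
Your proof has a genuine gap in the error equation: it leaves out the volume residuals that are not time derivatives. Testing $B_h(\eta+e,\Phi)=0$ with $\Phi=(e_u,e_v)$ produces $\int_{K_\Omega}\nabla\partial_t\eta_u\cdot\nabla e_u$ and $\int_{K_\Omega}\partial_t\eta_v\,e_v$. It also produces the coupling terms $\int_{K_\Omega}\nabla\eta_u\cdot\nabla e_v$ and $-\int_{K_\Omega}\nabla\eta_v\cdot\nabla e_u$, which your argument omits.

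\emph{The first coupling term.} It cannot be bounded against $\mathcal E^{1/2}$ by Cauchy--Schwarz, because $\mathcal E$ controls $e_v$ but not $\nabla e_v$. Any inverse estimate or integration by parts costs a full power of $h$.

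\emph{The second coupling term.} Bounded directly, it gives only $h^{q}$.

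\emph{How the paper removes them.} It uses the projections \eqref{eq:projection}, defined by orthogonality on the physical cells $K_\Omega$.
\begin{itemize}
\item Since $q\le p$, $e_v$ is an admissible test function in the $H^1$-type projection, so the first term vanishes.
\item Integrating the second term by parts on $K_\Omega$, the volume part $\int_{K_\Omega}\eta_v\,\Delta e_u$ vanishes by $L^2(K_\Omega)$-orthogonality, because $\Delta e_u\in\mathcal P^{p-2}\subseteq\mathcal P^{q}$. The boundary part cancels the $-\eta_v$ in $(\widehat{\eta_v}-\eta_v)$.
\end{itemize}
This is where $p-2\le q\le p$ is actually used, not in a balancing of mixed terms. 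It is also what turns your one-sided residual $\int(\widehat{\eta_v}-\eta_v)\nabla e_u\cdot\mathbf n$ into $\int_F\widehat{\eta_v}\,[\![\nabla e_u]\!]$, i.e. the paper's $\mathcal R_{\mathrm{flux}}$. Without this step your face residual is not a jump of $\nabla e_u$, so the claimed absorption into the $\tau$-dissipation does not go through.

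\emph{Why your full-cell projections do not fix this.} These orthogonalities hold on uncut cells but fail on every cut cell, for exactly the reason you noted for $\partial_t\eta_v$. There you would need $\|\nabla e_v\|_{L^2(K)}\lesssim h^{-1}\|e_v\|_{L^2(K)}$ plus your ghost-penalty norm equivalence, and similarly for $\int_{K_\Omega}\eta_v\Delta e_u$. This yields contributions of order $h^{p-1}$ and $h^{q}$. That suffices when $\gamma=\min\{p-1,q\}$, but not for the half-order gains in the statement. Recovering those would require exploiting that cut cells form an $O(h)$ strip around $\Gamma$, together with more regularity than $H^{p+1}$, $H^{q+1}$.

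\emph{Your options.} You can either adopt the paper's $K_\Omega$-based projections, or add such a strip argument. The paper's choice has its own cost: its trace and ghost-jump approximation bounds are not obviously uniform in $\eta_K$. The paper asserts these rather than proves them, and your full-cell choice does handle them correctly.

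\emph{The $\tau$/$\beta$ assignment.} Once the residual is in the form $\sum_F\int_F\widehat{\eta_v}[\![\nabla e_u]\!]+\widehat{\nabla\eta_u}\cdot[\![e_v]\!]$, the $\tau$-dissipation absorbs the term whose $v$-part is $O(h^{q+1/2})$. The $\beta$-dissipation absorbs the term whose $u$-part is $O(h^{p-1/2})$. So it is $\tau$ that lifts the $v$-contribution from $h^{q}$ to $h^{q+1/2}$, and $\beta$ that lifts the $u$-contribution from $h^{p-1}$ to $h^{p-1/2}$. Your assignment, $\tau\mapsto p'$ and $\beta\mapsto q'$, follows the case split in the statement and pairs them the other way, so that step needs rechecking.
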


\begin{proof}
Let $\mathcal U=(u,v), \mathcal U_h=(u_h,v_h)$, and $\Phi=(\phi_u,\phi_v)\in U_h^p\times V_h^q$. For convenience, we introduce the global residual form associated with the two energy equations of the CutEDG scheme,
\begin{equation}\label{eq:error_bilinear_form}
\begin{aligned}
B_h(\mathcal U_h,\Phi) := &\sum_{K\in\mathcal{T}_h} \int_{K_\Omega}
\nabla\big((u_h)_t-v_h\big)\cdot\nabla\phi_u + (v_h)_t\phi_v + \nabla u_h\cdot\nabla\phi_v \,d\mathbf{x} \\
&- \sum_{K\in\mathcal{T}_h} \int_{\partial(K\cap\Omega)} (\widehat{v_h}-v_h) (\nabla\phi_u\cdot\mathbf{n}) + (\widehat{\nabla u_h}\cdot\mathbf{n})
\phi_v \,dS\\
&+
\gamma_M^{(u)}h^{-2}
J_p((u_h)_t,\phi_u)
+
\gamma_M^{(v)}
J_q((v_h)_t,\phi_v).
\end{aligned}
\end{equation}
The elementwise mean constraint for $(u_h)_t-v_h$ does not enter the energy argument and is therefore omitted from $B_h$. By consistency of the numerical fluxes and the ghost-penalty terms, the exact and numerical solutions satisfy
\[ B_h(\mathcal U,\Phi)=0, \qquad B_h(\mathcal U_h,\Phi)=0. \]
Hence the error $U-U_h$ satisfies
\begin{equation}\label{eq:error_equation}
B_h(\mathcal U-\mathcal U_h,\Phi)=0 \qquad \forall\Phi\in U_h^p\times V_h^q.
\end{equation}

To estimate the error $\mathcal U-\mathcal U_h$, we compare the CutEDG solution $\mathcal U_h$ with suitablie projections of the exact solution $\mathcal U$. In particular, we define the projections
$(\tilde u_h,\tilde v_h)\in U_h^{p}\times V_h^{q}$ by
\begin{equation}\label{eq:projection}
\int_{K_\Omega} \nabla(u-\tilde u_h)\cdot\nabla\phi_u \,d\mathbf{x} =0,\quad \int_{K_\Omega} (v - \tilde v_h) \phi_v\ d{\bf x} = 0,\quad 
\int_{K_\Omega} u-\tilde u_h \,d\mathbf{x} =0,
\end{equation}
for all $(\phi_u,\phi_v)\in U_h^{p}\times V_h^{q}$. We now split the errors as
\begin{equation}
\label{eq:error_splitting}
\begin{aligned}
e_u := u-u_h &= \tilde e_u - \delta_u,  & \tilde e_u&:=\tilde u_h -u_h, & \delta_u&:= \tilde u_h - u,\\
e_v := v-v_h &= \tilde e_v - \delta_v,  & \tilde e_v &:=\tilde v_h -v_h,& \delta_v&:= \tilde v_h - v.
\end{aligned}
\end{equation}
Substituting \eqref{eq:error_splitting} into
\eqref{eq:error_equation} gives
\begin{equation}\label{eq:discrete_error_equation}
B_h((\tilde e_u,\tilde e_v),\Phi) = B_h((\delta_u,\delta_v),\Phi).
\end{equation}

Choose $\Phi = (\tilde e_u, \tilde e_v)$ and invoke \eqref{eq:error_bilinear_form}, we obtain
\begin{equation}\label{eq:simplified}
\begin{aligned}
&\dfrac{1}{2} \dfrac{d}{dt} \bigg(\sum_{K\in\mathcal T_h}\int_{K_\Omega} |\nabla\tilde{e}_u|^2 + \tilde{e}_v^2 \, d\mathbf{x} +\gamma_M^{(u)}h^{-2}J(\tilde{e}_u,\tilde{e}_u) + \gamma_M^{(v)}J(\tilde{e}_v,\tilde{e}_v)\bigg) \\
= & \underbrace{-\sum_{F\in\mathcal{F}_h^\Omega}\int_F [\![\nabla\tilde{e}_u]\!] \, \widehat{\delta_v} + [\![\tilde{e}_v]\!]\cdot\widehat{\nabla\delta_u} +  \beta [\![\tilde{e}_v]\!]^2+\tau [\![\nabla\tilde{e}_u]\!]^2 \, dS}_{\mathcal{R}_{\mathrm{flux}} } \\
     & + \gamma_M^{(u)}h^{-2}J\big((\delta_u)_t,\tilde{e}_u\big) 
     + \gamma_M^{(v)}J\big((\delta_v)_t,\tilde{e}_v\big),
\end{aligned}
\end{equation}
where we have used projection orthogonality to the volume integrals in $B_h((\delta_u,\delta_v),(\tilde e_u,\tilde e_v))$. To estimate the right-hand-side of \eqref{eq:simplified}, similar to \cite{appelo2015new}, the approximation properties of the projection defined in \eqref{eq:projection}, together with \eqref{eq:degree} and standard trace estimates yield
\begin{equation}\label{eq:projection_flux_estimate}
\mathcal{R}_{\mathrm{flux}} \lesssim \|\nabla \tilde e_u\|_{L^2(\Omega)}^2 + \|\tilde e_v\|_{L^2(\Omega)}^2 + h^{2p'}|u|_{H^{p+1}(\Omega)}^2 + h^{2q'}|v|_{H^{q+1}(\Omega)}^2,
\end{equation}
where $p'$ and $q'$ are given by \eqref{eq:pq_prime}. Additionally, for the stabilization terms in \eqref{eq:simplified}, we have
\begin{equation}\label{eq:ghost_projection_u}
    \begin{aligned}
\gamma_M^{(u)}h^{-2}J((\delta_u)_t,\tilde{e}_u)
        \lesssim & \ \gamma_M^{(u)}h^{-2}\sum_{F\in\mathcal{F}_h^g}\sum_{k=0}^pw_k\dfrac{h^{2k+1}}{(2k+1)(k!)^2}\int_F  |[\![\partial_{n_F}^k(\delta_u)_t]\!]|^2 + |[\![\partial_{n_F}^k \tilde{e}_u ]\!]|^2 \, dS  \\
        \lesssim &\  h^{-2} \sum_{F\in\mathcal{F}_h^g}\sum_{k=0}^pw_kh^{2k+1}h^{2(p+1)-2k-1}|u_t|^2_{H^{p+1}(\Omega)} + \gamma_M^{(u)}h^{-2}J(\tilde{e}_u,\tilde{e}_u)\\
        \lesssim &\  h^{2p}|u_t|^2_{H^{p+1}(\Omega)} + \gamma_M^{(u)}h^{-2} J(\tilde{e}_u,\tilde{e}_u).
    \end{aligned}
\end{equation}
and
\begin{equation}\label{eq:ghost_projection_v}
\begin{aligned}
\gamma_M^{(v)}J((\delta_v)_t,\tilde{e}_v)
          \lesssim&\  \gamma_M^{(v)}\sum_{F\in\mathcal{F}_h^g}\sum_{k=0}^q w_k\dfrac{h^{2k+1}}{(2k+1)(k!)^2}\int_F |[\![\partial_{n_F}^k(\delta_v)_t]\!]|^2 + |[\![\partial_{n_F}^k \tilde e_v]\!]|^2\, dS\\
         \lesssim & \ \sum_{F\in\mathcal{F}_h^g}\sum_{k=0}^q w_kh^{2k+1}h^{2(q+1)-2k-1}|v_t|^2_{H^{q+1}(\Omega)} + \gamma_M^{(v)}J(\tilde{e}_v,\tilde{e}_v) \\
        \lesssim & \ h^{2(q+1)}|v_t|^2_{H^{q+1}(\Omega)} + \gamma_M^{(v)}J(\tilde{e}_v,\tilde{e}_v).
\end{aligned}
\end{equation}
Plugging \eqref{eq:projection_flux_estimate}--\eqref{eq:ghost_projection_v} into \eqref{eq:simplified}, we get
\begin{equation}\label{eq:error_energy_inequality}
    \begin{aligned}
    &\dfrac{1}{2} \dfrac{d}{dt} \left(\sum_K\int_{K_\Omega}  |\nabla\tilde{e}_u|^2 + \tilde{e}_v^2\, d\mathbf{x} +\gamma_M^{(u)}h^{-2}J(\tilde{e}_u,\tilde{e}_u) + \gamma_M^{(v)}J(\tilde{e}_v,\tilde{e}_v)\right)  \\
    \lesssim & \ \sum_K\int_{K_\Omega} |\nabla\tilde{e}_u|^2+ \tilde{e}_v^2\, d\mathbf{x} + \gamma_M^{(u)}h^{-2}J(\tilde{e}_u,\tilde{e}_u) + \gamma_M^{(v)}J(\tilde{e}_v,\tilde{e}_v)\\
    &+ h^{2\gamma} \left( |u|^2_{H^{p+1}(\Omega)} + |v|^2_{H^{q+1}(\Omega)} \right) + h^{2p}|u_t|^2_{H^{p+1}(\Omega)} + h^{2(q+1)}|v_t|^2_{H^{q+1}(\Omega)}.
    \end{aligned}
\end{equation}
Since the discrete initial data are chosen from the corresponding projections, the discrete component of the initial error vanishes. Applying Gronwall's inequality to \eqref{eq:error_energy_inequality} therefore yields
\begin{equation}
\begin{aligned}
& \|\nabla\tilde e_u\|_{L^2(\Omega)}^2 + \|\tilde e_v\|_{L^2(\Omega)}^2 + \gamma_M^{(u)}h^{-2}J_p(\tilde e_u,\tilde e_u) + \gamma_M^{(v)}J_q(\tilde e_v,\tilde e_v) \\
\lesssim &\  h^{2\gamma} \left( |u|_{H^{p+1}(\Omega)}^2 + |v|_{H^{q+1}(\Omega)}^2 \right) + h^{2p}|u_t|_{H^{p+1 (\Omega)}}^2 + h^{2(q+1)} |v_t|_{H^{q+1}(\Omega)}^2.
\end{aligned}
\end{equation}
Finally, combining this estimate with the approximation properties of the projections and applying the triangle inequality gives \eqref{eq:error_estimate}.
\end{proof}

\begin{remark}
The restriction \eqref{eq:degree} is imposed solely for the derivation of the error estimate; no corresponding relation between the polynomial degrees $p$ and $q$ is required for the stability of the scheme. In our numerical experiments, the choice $q=p-1$ yields particularly robust results, consistent with commonly used polynomial-degree combinations in standard EDG formulations (see, e.g., \cite{appelo2015new}). Moreover, for this choice of polynomial degrees, the suboptimal convergence rate stated in Theorem \ref{thm:error_estimate} can be improved to the optimal rate with special designed numerical fluxes by employing suitably constructed projection operators for which the relevant projection errors vanish on element boundaries. We refer to \cite{appelo2015new} for the corresponding analysis.
\end{remark}

\section{Interface problems}\label{sec:interface_problem}

We now extend the CutEDG formulation to wave propagation problems with discontinuous wave speeds across an internal material interface. Such transmission problems arise naturally in acoustic and seismic wave propagation through heterogeneous media. When the material interface has a complicated geometry, requiring the computational mesh to conform to the interface can lead to substantial mesh-generation costs. We therefore retain the unfitted framework developed in the preceding sections and employ a Cartesian background mesh whose construction is independent of the interface location.

\subsection{Model problem and interface coupling}\label{subsec:model_interface_problem}

Let the physical domain $\Omega$ be decomposed into two non-overlapping subdomains $\Omega_1$ and $\Omega_2$ such that $\overline{\Omega}=\overline{\Omega}_1\cup\overline{\Omega}_2$, $\Omega_1\cap\Omega_2=\emptyset$, with the internal interface $\Gamma_I=\partial\Omega_1\cap\partial\Omega_2$. For $i=1,2$, let $u_i$ denote the wave field in $\Omega_i$, and let $c_i>0$ denote the corresponding constant wave speed. In each subdomain, we consider
\begin{equation}\label{eq:interface_wave_equation}
(u_i)_{tt} = c_i^2\Delta u_i+f_i, \qquad (\mathbf{x},t)\in\Omega_i\times(0,T].
\end{equation}
The two subdomain solutions are coupled through the transmission conditions
\begin{equation}\label{eq:interface_transmission_conditions}
u_1=u_2, \qquad c_1^2\nabla u_1\cdot\mathbf{n}_1 + c_2^2\nabla u_2\cdot\mathbf{n}_2 = 0 \qquad \text{on }\Gamma_I,
\end{equation}
where $\mathbf{n}_i$ denotes the outward unit normal to $\Omega_i$ on $\Gamma_I$. The first condition enforces continuity of the wave field, whereas the second expresses conservation of the normal flux across the interface. The problem is supplemented with appropriate boundary conditions on $\partial\Omega$ and the initial conditions
\begin{equation}\label{eq:interface_initial_conditions}
u_i(\mathbf{x},0) = u_{0,i}(\mathbf{x}), \qquad (u_i)_t(\mathbf{x},0) = v_{0,i} (\mathbf{x}), \qquad \mathbf{x}\in\Omega_i, \qquad i=1,2.
\end{equation}
As in the single-domain case, we introduce $v_i=(u_i)_t$, so that \eqref{eq:interface_wave_equation} can be written as the first-order system in time
\begin{equation}\label{eq:interface_first_order_system}
\begin{cases}
(u_i)_t=v_i,\\
(v_i)_t=c_i^2\Delta u_i+f_i,
\end{cases} \qquad (\mathbf{x},t)\in\Omega_i\times(0,T], \qquad i=1,2.
\end{equation}
Since the interface is stationary, the transmission conditions imply
\begin{equation}\label{eq:interface_transmission_conditions_vu}
v_1=v_2, \qquad c_1^2\nabla u_1\cdot\mathbf{n}_1 + c_2^2\nabla u_2\cdot\mathbf{n}_2 = 0 \qquad \text{on }\Gamma_I.
\end{equation}
When $\Gamma_I$ is aligned with the computational mesh, the standard EDG formulation can be applied independently in the two subdomains, with the numerical traces coupled across the interface through \eqref{eq:interface_transmission_conditions_vu}. Our interest here is in the more general case in which $\Gamma_I$ intersects the background mesh arbitrarily. We therefore introduce the corresponding CutEDG discretization below.

\subsection{CutEDG interface formulation} \label{subsec:cutedg_interface_formulation}
The unfitted discretization is constructed separately on the active mesh associated with each subdomain. Since the geometric setting and ghost-penalty stabilization are analogous to those introduced for the single-domain problem, we adopt the same notation with an additional subscript $i$ to indicate the corresponding subdomain. For $i=1,2$, we define the active mesh and active domain by
\begin{equation}\label{eq:interface_active_mesh}
\mathcal{T}_{h,i} = \left\{K\in\widetilde{\mathcal{T}}_h:|K\cap\Omega_i|>0\right\}, \qquad \Omega_{h,i} = \bigcup_{K\in\mathcal{T}_{h,i}}K.
\end{equation}
The corresponding set of cut cells is
\begin{equation}\label{eq:interface_cut_cells}
\mathcal{T}_{h,i}^{\mathrm{cut}} = \left\{K\in\mathcal{T}_{h,i}:0<|K\cap\Omega_i|<|K|\right\}.
\end{equation}
Thus, $\mathcal{T}_{h,i}^{\mathrm{cut}}$ contains cells intersected by the internal interface $\Gamma_I$ as well as, when applicable, cells intersected by the exterior boundary $\Gamma = \partial\Omega$; see Figure \ref{fig:interface} for an illustration in two dimensions.

\begin{figure}[htbp]
\centering
\begin{tikzpicture}[scale=0.9]
    % 网格参数：8×6 个单元
    \def\nx{8}
    \def\ny{6}
    \pgfmathsetmacro{\dx}{6/\nx}
    \pgfmathsetmacro{\dy}{4/\ny}
    
    % ---------- 1. 背景网格（灰色虚线） ----------
    \foreach \i in {0,...,\nx} {
        \draw[gray!50, dashed, thin] (\i*\dx,0) -- (\i*\dx,\ny*\dy);
    }
    \foreach \j in {0,...,\ny} {
        \draw[gray!50, dashed, thin] (0,\j*\dy) -- (\nx*\dx,\j*\dy);
    }
    
    % ---------- 2. 子区域填充 ----------
    % 圆心和半径
    \def\cx{3.0}
    \def\cy{2.0}
    \def\R{1.8}
    
    % 先填充 Ω2（外部）为极淡灰色，以示区分
    \fill[gray!5, opacity=0.2] (0,0) rectangle (\nx*\dx,\ny*\dy);
    % 再填充 Ω1（内部）为淡蓝色，覆盖 Ω2
    \fill[blue!10, opacity=0.3, even odd rule] (\cx,\cy) circle (\R);
    
    % 单独填充切割单元（使其颜色与 Ω1 一致）
    \foreach \i/\j in {2/2,2/3,3/1,3/2,3/3,3/4,4/1,4/2,4/3,4/4,5/2,5/3} {
        \pgfmathsetmacro{\x}{\i*\dx}
        \pgfmathsetmacro{\y}{\j*\dy}
        \fill[blue!10, opacity=0.3] (\x,\y) rectangle (\x+\dx,\y+\dy);
    }

    % ---------- 5. 高亮切割单元（斜线 + 粗边框） ----------
    \foreach \i/\j in {2/2,2/3,3/1,3/2,3/3,3/4,4/1,4/2,4/3,4/4,5/2,5/3} {
        \pgfmathsetmacro{\x}{\i*\dx}
        \pgfmathsetmacro{\y}{\j*\dy}
        \fill[pattern=north east lines, pattern color=blue!30] (\x,\y) rectangle (\x+\dx,\y+\dy);
        \draw[thick, blue!60] (\x,\y) rectangle (\x+\dx,\y+\dy);
    }

    % 高亮\Omega2区域
    \foreach \i/\j in {0/0,0/1,0/2,0/3,0/4,0/5,1/0,1/5,6/0,6/5,7/0,7/1,7/2,7/3,7/4,7/5} {
        \pgfmathsetmacro{\x}{\i*\dx}
        \pgfmathsetmacro{\y}{\j*\dy}
        \fill[pattern=north east lines, pattern color=green!30] (\x,\y) rectangle (\x+\dx,\y+\dy);
        \draw[thick, green!60] (\x,\y) rectangle (\x+\dx,\y+\dy);
    }

    % 高亮相交区域
    % 高亮\Omega2区域
    \foreach \i/\j in {1/1,1/2,1/3,1/4,2/0,2/1,2/4,2/5,3/0,3/5,4/0,4/5,5/0,5/1,5/4,5/5,6/1,6/2,6/3,6/4} {
        \pgfmathsetmacro{\x}{\i*\dx}
        \pgfmathsetmacro{\y}{\j*\dy}
        \fill[pattern=north east lines, pattern color=pink!40] (\x,\y) rectangle (\x+\dx,\y+\dy);
        \draw[thick, pink!60] (\x,\y) rectangle (\x+\dx,\y+\dy);
    }

    % ---------- 3. 绘制界面 Γ（红色实线） ----------
    \draw[thick, red] (\cx,\cy) circle (\R) node[above right, xshift=0.8cm, yshift=0.7cm, black] {$\Gamma_I$};

    % ---------- 4. 标注子区域 ----------
    \node at (\cx-0.2,\cy-0.2) {$\Omega_1$};   % 内部
    \node at (5.0,3.5) {$\Omega_2$};
    \node at (0.4,0.4) {$\Gamma$};
    
    % ---------- 7. 坐标轴 ----------
    \draw[->] (-0.5,0) -- (6.8,0) node[right] {$x$};
    \draw[->] (0,-0.5) -- (0,4.8) node[above] {$y$};
    
\end{tikzpicture}
\caption{The active mesh $\mathcal{T}_{h,i}$. Subdomains $\Omega_i$ are separated by the immersed interface $\Gamma_I$, $\Gamma$ is the exterior boundary, and the red region corresponds to the cut elements $\mathcal{T}_{h,i}^{\mathrm{cut}}$.}
\label{fig:interface}
\end{figure}

To construct the EDG formulation, we approximate $u_i$ and $v_i$ by piecewise polynomials of degrees $p$ and $q$, respectively. Precisely, we define
\begin{equation}\label{eq:approximation_space_interface}
\begin{aligned}
U_{h,i}^{p} &:= \{w_h({\bf x})\big| w_h({\bf x}) \in \mathcal{P}^{p}(K), {\bf x} \in K\quad \forall K \in \mathcal{T}_{h,i} \},\\
V_{h,i}^{q} &:= \{w_h({\bf x})\big| w_h({\bf x}) \in \mathcal{P}^{q}(K), {\bf x} \in K\quad \forall K \in \mathcal{T}_{h,i} \}.
\end{aligned}
\end{equation}
For convenience, let $K_{\Omega_i}=K\cap\Omega_i$. For each $i=1,2$, the CutEDG approximation seeks $(u_{h,i},v_{h,i})\in U_{h,i}^{p}\times V_{h,i}^{q}$ such that, for all $(\phi_{u_i},\phi_{v_i},\widetilde{\phi}_i)\in U_{h,i}^{p}\times V_{h,i}^{q}\times U_{h,i}^0$, the mean constraint
\begin{equation}\label{eq:interface_cutEDG_mean}
\sum_{K\in\mathcal{T}_{h,i}}\int_{K_{\Omega_i}}h^{-2}\widetilde{\phi}_i\big((u_{h,i})_t-v_{h,i}\big)\,d\mathbf{x}+\gamma_M^{(u_i)}h^{-2}J_{p,i}\big((u_{h,i})_t,\widetilde{\phi}_i\big)=0,
\end{equation}
and the energy equations
\begin{subequations}\label{eq:interface_cutEDG}
\begin{align}
&\sum_{K\in\mathcal{T}_{h,i}}\int_{K_{\Omega_i}}c_i^2\nabla\phi_{u_i}\cdot\nabla\big((u_{h,i})_t-v_{h,i}\big)\,d\mathbf{x}+\gamma_M^{(u_i)}h^{-2}J_{p,i}\big((u_{h,i})_t,\phi_{u_i}\big)
\nonumber\\
=&\sum_{K\in\mathcal{T}_{h,i}}\int_{\partial K_{\Omega_i}}c_i^2\nabla\phi_{u_i}\cdot\mathbf{n}\big(\widehat{v_{h,i}}-v_{h,i}\big)\,dS,\label{eq:interface_cutEDG_u}\\
&\sum_{K\in\mathcal{T}_{h,i}}\int_{K_{\Omega_i}}\phi_{v_i}(v_{h,i})_t+ c_i^2\nabla\phi_{v_i}\cdot\nabla u_{h,i}\,d\mathbf{x}+\gamma_M^{(v_i)}J_{q,i}\big((v_{h,i})_t,\phi_{v_i}\big)
\nonumber\\
= &\sum_{K\in\mathcal{T}_{h,i}}\int_{\partial K_{\Omega_i}}\phi_{v_i}\widehat{c_i^2\nabla u_{h,i}}\cdot\mathbf{n}\,dS
+\sum_{K\in\mathcal{T}_{h,i}}\int_{K_{\Omega_i}}f_i\phi_{v_i}\,d\mathbf{x}
\label{eq:interface_cutEDG_v}
\end{align}
\end{subequations}
are satisfied. Here, $\gamma_M^{(u_i)}>0$ and $\gamma_M^{(v_i)}>0$ are stabilization parameters. For a polynomial degree $r$, the ghost-penalty form in subdomain $\Omega_i$ is defined by
\begin{equation}\label{eq:interface_ghost_penalty}
J_{r,i}(w,z)=\sum_{F\in\mathcal{F}_{h,i}^{g}}\sum_{\ell=0}^{r}\omega_\ell\frac{h^{2\ell+1}}
{(2\ell+1)(\ell!)^2}\int_F [\![\partial_{n_F}^{\ell}w]\!][\![\partial_{n_F}^{\ell}z]\!]
\,dS,
\end{equation}
where the corresponding set of ghost-penalty faces is
\begin{equation}\label{eq:interface_ghost_faces}
\mathcal{F}_{h,i}^{g} = \left\{
\begin{aligned}
F:\;&F\text{ is a common $(d-1)$-dimensional face of }
K^+,K^-\in\mathcal{T}_{h,i},\\
&K^+\in\mathcal{T}_{h,i}^{\mathrm{cut}} \text{ or } K^-\in\mathcal{T}_{h,i}^{\mathrm{cut}}
\end{aligned}
\right\}.
\end{equation}

It remains to specify the numerical fluxes $\widehat{v_{h,i}}$ and $\widehat{c_i^2\nabla u_{h,i}}$. On interior mesh faces contained in $\Omega_i$, we employ the same EDG numerical fluxes as in \eqref{eq:EDG_fluxes}, with the spatial flux $c_i^2\nabla u_{h,i}$ replacing $\nabla u_h$. On the exterior boundary $\partial\Omega$, the boundary fluxes are imposed as in the single-domain formulation. On the internal interface $\Gamma_I$, the two subdomain solutions are coupled through the transmission conditions \eqref{eq:interface_transmission_conditions_vu}. In particular, the numerical trace of $v$ is chosen to be single-valued across the interface,
\begin{equation}\label{eq:interface_numerical_flux_v}
\widehat{v_{h,1}} = \widehat{v_{h,2}} \qquad \text{on }\Gamma_I,
\end{equation}
whereas conservation of the numerical spatial flux requires
\begin{equation}\label{eq:interface_numerical_flux_grad}
\widehat{c_1^2\nabla u_{h,1}}\cdot\mathbf{n}_1 + \widehat{c_2^2\nabla u_{h,2}}\cdot\mathbf{n}_2 = 0 \qquad \text{on }\Gamma_I.
\end{equation}

To define an energy-stable interface flux, we fix the orientation $\mathbf{n}=\mathbf{n}_1=-\mathbf{n}_2$ and introduce the one-sided normal fluxes
\begin{equation}\label{eq:interface_normal_flux}
q_{h,i}=c_i^2\nabla u_{h,i}\cdot\mathbf{n},\qquad i=1,2.
\end{equation}
With this common orientation, the exact transmission conditions are equivalent to
\[v_1=v_2,\qquad q_1=q_2 \qquad \text{on }\Gamma_I.\]
We then define 
\begin{equation}\label{eq:interface_fluxes}
\begin{aligned}
\widehat{v}&=\alpha v_{h,1}+(1-\alpha)v_{h,2},\\
\widehat{q}&=(1-\alpha)q_{h,1}+\alpha q_{h,2},
\end{aligned}
\end{equation}
where $\alpha\in[0,1]$. The numerical traces on the two sides of the interface are consequently assigned as
\begin{equation}\label{eq:interface_flux_assignment}
\widehat{v_{h,1}}=\widehat{v_{h,2}}=\widehat{v}, \qquad \widehat{c_1^2\nabla u_{h,1}}\cdot\mathbf{n}_1 = \widehat{q},
\qquad \widehat{c_2^2\nabla u_{h,2}}\cdot\mathbf{n}_2 = -\widehat{q}.
\end{equation}
Thus, the interface conservation conditions
\eqref{eq:interface_numerical_flux_v}--\eqref{eq:interface_numerical_flux_grad}
are satisfied by construction.

\subsection{Energy stability}\label{subsec:interface_stability}

The stability of the interface formulation follows from the same energy argument used for the single-domain CutEDG method. The only additional contribution arises from the internal interface, where the numerical fluxes in \eqref{eq:interface_fluxes} provide the required dissipation.

\begin{theorem}\label{thm:interface_stability}
Assume that $c_1,c_2>0$, $f_i=0$, and that homogeneous energy-stable boundary conditions are imposed on $\partial\Omega$. Let the interface numerical fluxes be given by \eqref{eq:interface_fluxes}, with $\tau,\beta\geq0$. Then the semi-discrete CutEDG interface formulation \eqref{eq:interface_cutEDG_mean}--\eqref{eq:interface_cutEDG} satisfies
\begin{equation}\label{eq:interface_energy_stability}
\frac{d}{dt}\left(E_h^I(t) +\sum_{i=1}^2c_i^2\gamma_M^{(u_i)}h^{-2}J_{p,i}(u_{h,i},u_{h,i})+\gamma_M^{(v_i)}J_{q,i}(v_{h,i},v_{h,i})\right)\leq0,
\end{equation}
where the discrete energy is defined by
\begin{equation}\label{eq:interface_discrete_energy}
E_h^I(t)=\frac{1}{2}\sum_{i=1}^{2}\sum_{K\in\mathcal{T}_{h,i}}\int_{K\cap\Omega_i}c_i^2|\nabla u_{h,i}|^2+|v_{h,i}|^2 \,d\mathbf{x}.
\end{equation}
\end{theorem}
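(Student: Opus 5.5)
The proof will follow the same energy argument as Theorem~\ref{thm:semi_discrete_stability}, carried out separately in each subdomain and then summed; the only new ingredient is the treatment of the interface terms on $\Gamma_I$. In each subdomain $\Omega_i$ I choose $\phi_{u_i}=u_{h,i}$ in \eqref{eq:interface_cutEDG_u} and $\phi_{v_i}=v_{h,i}$ in \eqref{eq:interface_cutEDG_v}, add the two equations, and sum over $i=1,2$. The mean constraint \eqref{eq:interface_cutEDG_mean} plays no role here, exactly as in the single-domain case.

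\emph{Volume terms.} The terms $-c_i^2\nabla u_{h,i}\cdot\nabla v_{h,i}$ from the $u$-equation and $+c_i^2\nabla v_{h,i}\cdot\nabla u_{h,i}$ from the $v$-equation cancel. The remaining volume terms combine into $\frac{d}{dt}$ of the physical energy in \eqref{eq:interface_discrete_energy}. By symmetry of $J_{p,i}$ and $J_{q,i}$, the ghost-penalty terms become $\frac12\frac{d}{dt}$ of the corresponding stabilization energies. One bookkeeping point must be checked. With the test functions above, the $u$-stabilization appears as $\gamma_M^{(u_i)}h^{-2}J_{p,i}$ without a factor $c_i^2$, while the statement writes $c_i^2\gamma_M^{(u_i)}$. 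I would reconcile this by absorbing $c_i^2$ into the (arbitrary positive) parameter $\gamma_M^{(u_i)}$. Alternatively, one reads the ghost penalty in \eqref{eq:interface_cutEDG_u} as carrying the same $c_i^2$ weight as the volume term. Either way nonnegativity is unaffected, and the harmless factor $\tfrac12$ is dropped as in Theorem~\ref{thm:semi_discrete_stability}.

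\emph{Face terms away from $\Gamma_I$.} The boundary integrals reduce to sums over faces of
\[
c_i^2\nabla u_{h,i}\cdot\mathbf n\,(\widehat{v_{h,i}}-v_{h,i})+v_{h,i}\,\widehat{c_i^2\nabla u_{h,i}}\cdot\mathbf n .
\]
On interior faces in $\mathcal F_{h,i}^\Omega$ the fluxes are \eqref{eq:EDG_fluxes} with $\nabla u$ replaced by $c_i^2\nabla u$. The computation leading to \eqref{eq:energy_dissipation} then applies verbatim and gives $-\int_F \tau[\![c_i^2\nabla u_{h,i}]\!]^2+\beta|[\![v_{h,i}]\!]|^2\,dS\le 0$. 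On $\partial\Omega$ the contribution is nonpositive by the assumed energy-stable boundary conditions; for the Dirichlet fluxes \eqref{eq:flux_physical_bdry} it is exactly zero.

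\emph{Interface terms (the main step).} On $\Gamma_I$, fix $\mathbf n=\mathbf n_1=-\mathbf n_2$ and use \eqref{eq:interface_normal_flux}. Side 1 contributes $q_{h,1}(\widehat v-v_{h,1})+v_{h,1}\widehat q$. Side 2, whose outward normal is $-\mathbf n$, contributes $-q_{h,2}(\widehat v-v_{h,2})-v_{h,2}\widehat q$ by \eqref{eq:interface_flux_assignment}. The sum is
\[
\widehat v\,(q_{h,1}-q_{h,2})+\widehat q\,(v_{h,1}-v_{h,2})-q_{h,1}v_{h,1}+q_{h,2}v_{h,2}.
\]
Substituting \eqref{eq:interface_fluxes} and collecting the coefficients of $v_{h,1}q_{h,1}$, $v_{h,1}q_{h,2}$, $v_{h,2}q_{h,1}$ and $v_{h,2}q_{h,2}$, each coefficient vanishes for every $\alpha\in[0,1]$. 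So the interface is exactly energy-neutral. If penalty terms in $\tau,\beta$ were added to \eqref{eq:interface_fluxes}, they would produce the same nonpositive squares as on interior faces.

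This algebraic cancellation is the step I expect to need the most care, specifically getting the signs from the opposite orientation on side 2 right. Once it is verified, combining the three contributions yields \eqref{eq:interface_energy_stability}.
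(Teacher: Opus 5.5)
Your proposal is correct and follows the paper's proof essentially step for step. You test with $(u_{h,i},v_{h,i})$, cancel the volume coupling, treat interior faces as in Theorem~\ref{thm:semi_discrete_stability}, and show that the interface term $\widehat v(q_{h,1}-q_{h,2})+\widehat q(v_{h,1}-v_{h,2})-v_{h,1}q_{h,1}+v_{h,2}q_{h,2}$ vanishes identically for the fluxes \eqref{eq:interface_fluxes}. One caveat, which the paper shares: the factor $\tfrac12$ is not actually harmless. Your identity shows that $E_h^I$ plus \emph{one half} of the ghost-penalty energies is nonincreasing, with the $u$-penalty weighted exactly as it appears in the scheme, and dropping the $\tfrac12$ adds a sign-indefinite term $\tfrac12\tfrac{d}{dt}(\cdot)$, so \eqref{eq:interface_energy_stability} should be read with that normalization.
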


\begin{proof}
For each $i=1,2$, we choose $\phi_{u_i}=u_{h,i}, \phi_{v_i}=v_{h,i}$ in \eqref{eq:interface_cutEDG_u}--\eqref{eq:interface_cutEDG_v} and sum the result equaitons. Using the symmetry of the ghost-penalty forms,
\[
J_{p,i}((u_{h,i})_t,u_{h,i}) = \frac{1}{2} \frac{d}{dt} J_{p,i}(u_{h,i},u_{h,i}),\qquad J_{q,i}((v_{h,i})_t,v_{h,i})=\frac{1}{2}\frac{d}{dt}J_{q,i}(v_{h,i},v_{h,i}).
\]
The contributions from interior faces in each subdomain are treated exactly as in the single-domain stability analysis. Consequently,
\begin{equation}\label{eq:dE_interface}
\begin{aligned}
&\frac{d}{dt}\left(E_h^I(t) +\sum_{i=1}^2c_i^2\gamma_M^{(u_i)}h^{-2}J_{p,i}(u_{h,i},u_{h,i})+\gamma_M^{(v_i)}J_{q,i}(v_{h,i},v_{h,i})\right) \\
= & -\sum_{i=1}^{2} \sum_{F\in\mathcal{F}_h^{\Omega_i}}  \int_F \tau\left|[\![\nabla u_{h,i}]\!]\right|^2  + \beta 
\left|[\![ v_{h,i}]\!]\right|^2 \,dS \\
&+ \underbrace{ \int_{\Gamma_I}  \widehat{v}(q_{h,1}-q_{h,2}) + \widehat{q}(v_{h,1}-v_{h,2}) - v_{h,1}q_{h,1} + v_{h,2}q_{h,2} \,dS}_{\mathcal{D}_{\Gamma_I}}.
\end{aligned}
\end{equation}
 Using the interface fluxes \eqref{eq:interface_fluxes}, we obtain $\mathcal{D}_{\Gamma_I} = 0$. Substitution into \eqref{eq:dE_interface} therefore yields \eqref{eq:interface_energy_stability}, which proves the result.
\end{proof}

%%%%%%%%%%%%%%%%%%%%%%%%%%%%%%%%%%%%%%%%%%%%%%%%%%%%%%%%%
\section{Numerical experiments} \label{sec:num}

In this section, we present numerical tests for one-dimensional and two-dimensional boundary cut problems as well as interface problems to verify our theoretical results. In all numerical experiments, the third-order strong stability-preserving Runge–Kutta (SSPRK3) scheme is employed for time discretization. To isolate the spatial discretization error in the accuracy tests, we set
$\Delta t=\frac{1}{(p+1)^2}h^2$, so that the temporal
discretization error is sufficiently small and does not affect the observed
spatial convergence rates.
It should be noted that numerical integration over irregular domains, including volume and surface integrals, is required for two-dimensional problems and interface problems. Accordingly, we utilize the $deal.II$ library to implement the proposed numerical algorithm.

%%%%%
\begin{example}\label{ex1} Accuracy test for one dimensional problem
\end{example}
We consider the one-dimensional wave equation 
\[u_{tt} = u_{xx}\]
in domain $\Omega = (-1,1)$, subject to the initial value conditions $u(x,0) = \sin \pi x,\quad u_t(x,0) =  0$. And the homogeneous Dirichlet boundary conditions are imposed on both boundaries. The corresponding exact solution is given by $u(x,t) = \sin (\pi x)\cos(\pi t)$.

For convenience, we only consider the case where the left boundary does not coincide with the mesh in our numerical examples.
Figure \ref{fig1} presents the $L^2$ errors of $u_h$ on uniform meshes at the final time $t=0.8$ for various cut-cell ratios $\epsilon=1,10^{-1},10^{-2},10^{-3},10^{-6},10^{-12}$ and polynomial degrees $p=2,3,4,5$, with $q=p-1$. Here, $\epsilon$ denotes the ratio of the cut-cell size to the regular cell size. It is showed that the optimal \((p+1)\)-th order convergence rate is maintained even for extremely small cut cells with \(\epsilon=10^{-12}\), indicating that the stability and accuracy of the proposed method is essentially insensitive to the cut-cell size.

Figure \ref{fig2} shows the condition number of the mass matrix and the maximum absolute value of the eigenvalues of the semi-discrete system matrix for a fixed number of cells, $N=20$, with polynomial degrees $p=2,3,4,5$ and $q=p-1$, respectively. To investigate their dependence on the cut-cell size, we set $\epsilon=10^{-m}$, where $\epsilon = \eta$ defined in \eqref{eq:cut_fraction}, and uniformly sample 30 values of $m$ over the interval $[0,6]$. The numerical results demonstrate that the condition number of the mass matrix remains uniformly bounded with respect to the cut-cell configuration, and does not deteriorate as the volume fraction of the cut cells tends to zero, providing concrete evidence for the effectiveness of our proposed strategy. Moreover, the maximum absolute eigenvalue of the semi-discrete operator remains uniformly bounded as $\epsilon$ decreases, indicating that the proposed method effectively avoids the severe time-step restriction induced by arbitrarily small cut cells.

\begin{remark}
    All numerical simulations are implemented with alternating flux with $\alpha = \beta = \tau = 0$ in \eqref{eq:EDG_fluxes}, we can observe the optimal convergence rate $p + 1$. Furthermore, we also conduct numerical experiments using Sommerfeld flux ($\alpha = \frac12, \beta = \tau = \frac12$) and central flux ($\alpha = \frac12, \beta = \tau =0$), and the resulting convergence rates remain consistent with those of the original EDG method \cite{appelo2015new}.  
\end{remark}

\begin{figure}[!ht]
\centering
\includegraphics[width=0.32\textwidth]{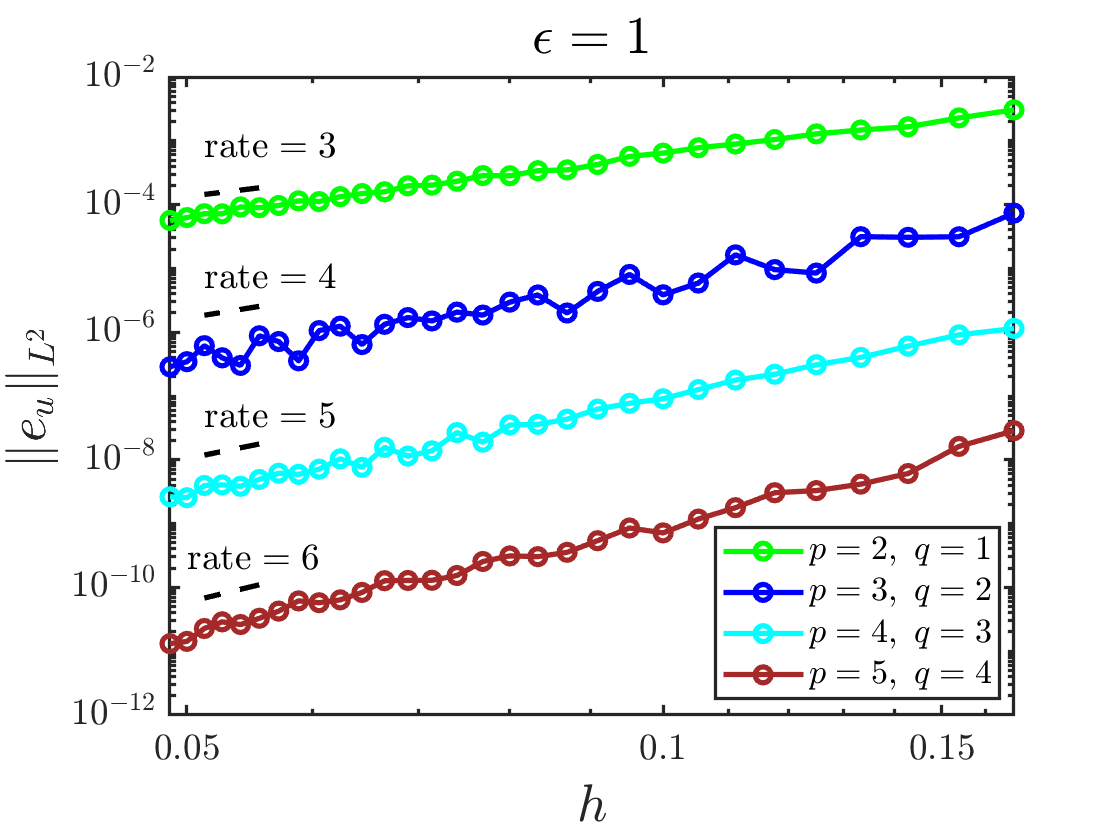}
\includegraphics[width=0.32\textwidth]{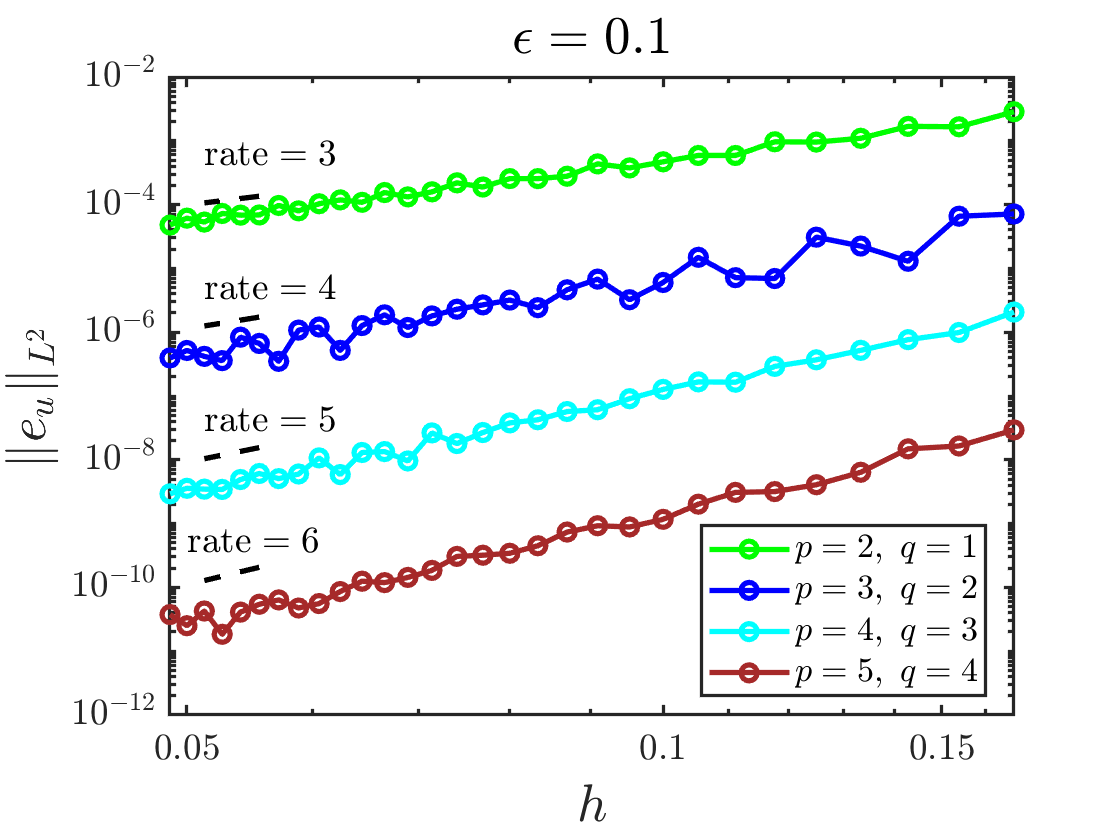}
\includegraphics[width=0.32\textwidth]{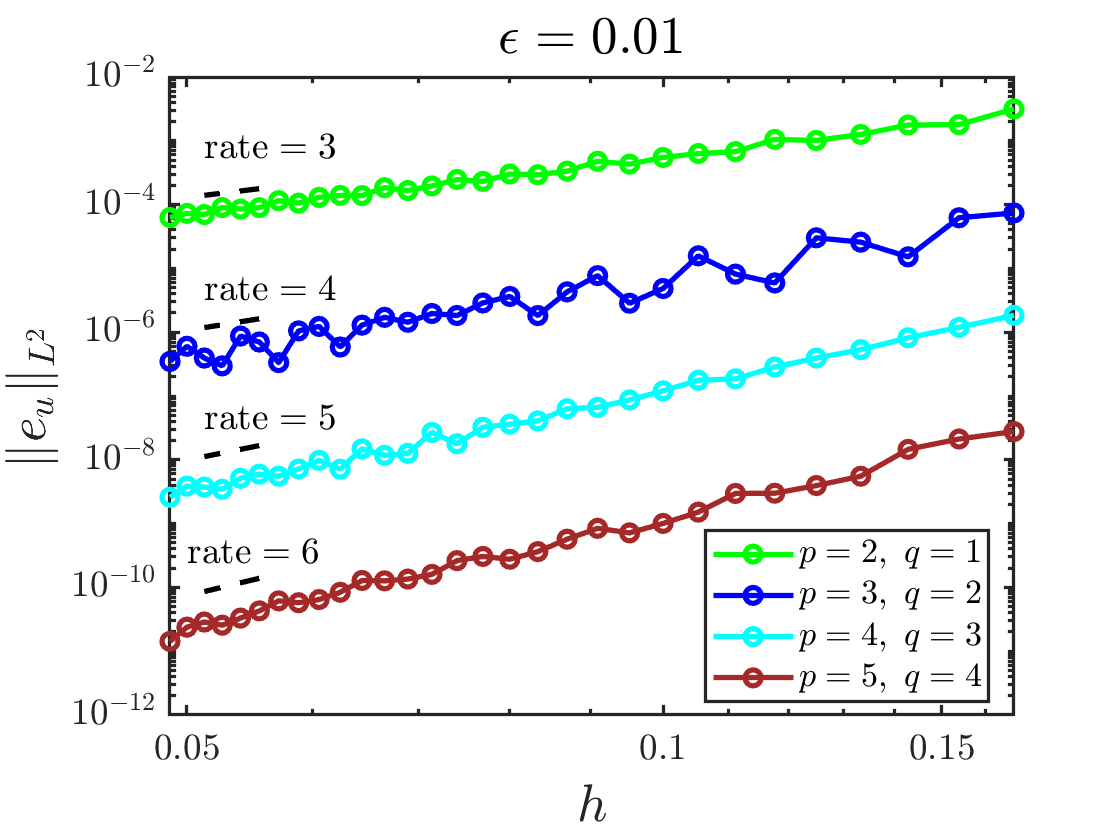}\\
\includegraphics[width=0.32\textwidth]{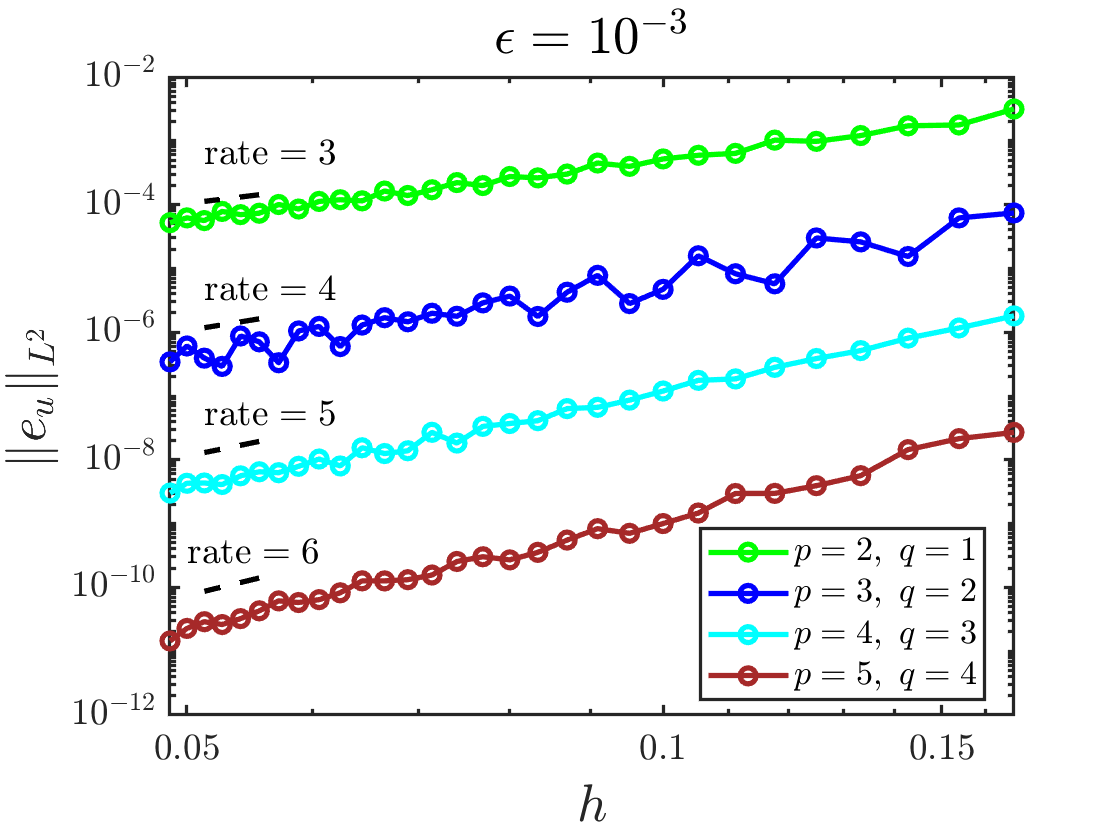}
\includegraphics[width=0.32\textwidth]{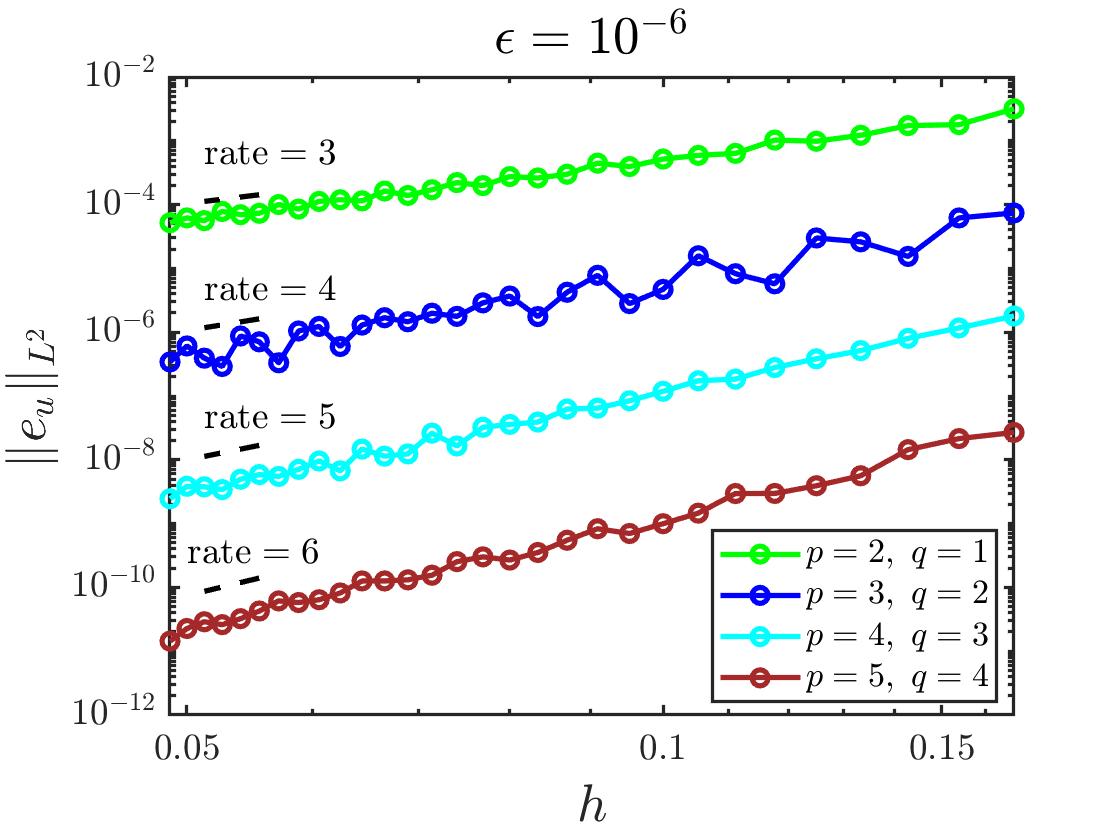}
\includegraphics[width=0.32\textwidth]{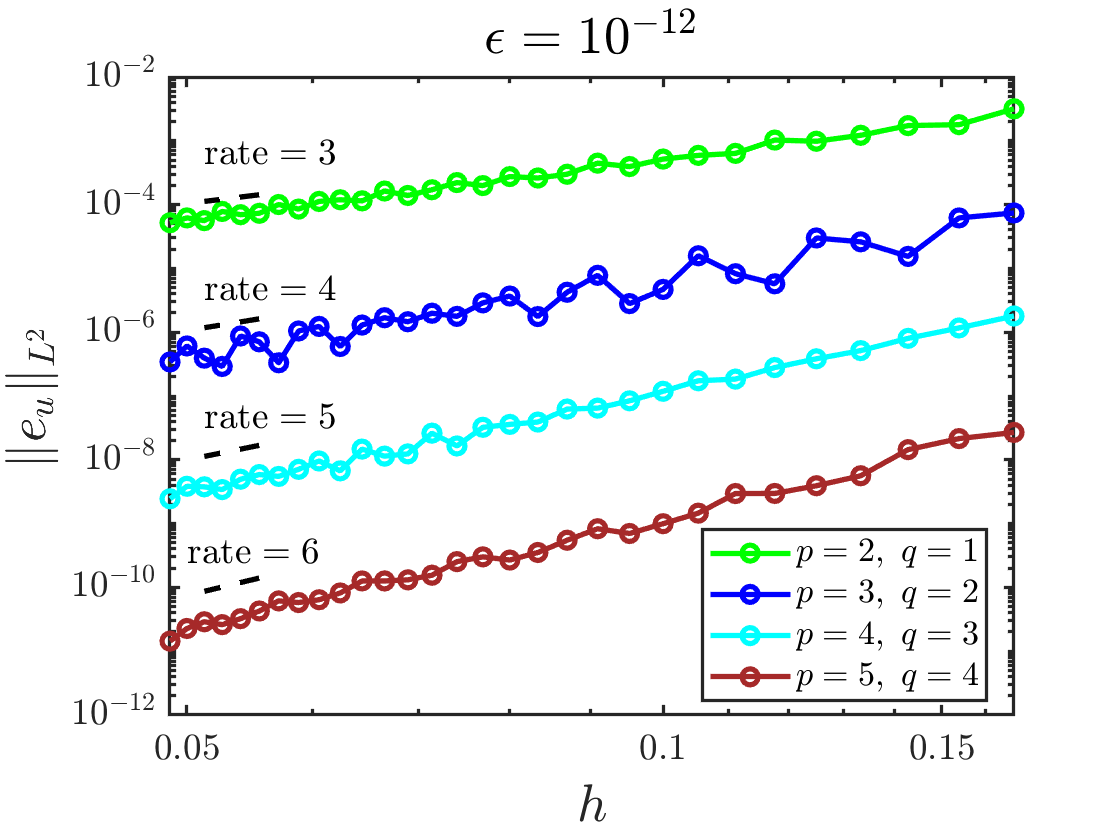}
\caption{Example \ref{ex1}: $L^2$ errors of the CutEDG scheme at $t=0.8$ with different cut‑cell ratios $\epsilon$ and polynomial degrees $(p,q)$.}
\label{fig1}
\end{figure}

\begin{figure}[!ht]
\begin{center}
\includegraphics[width=0.32\textwidth]{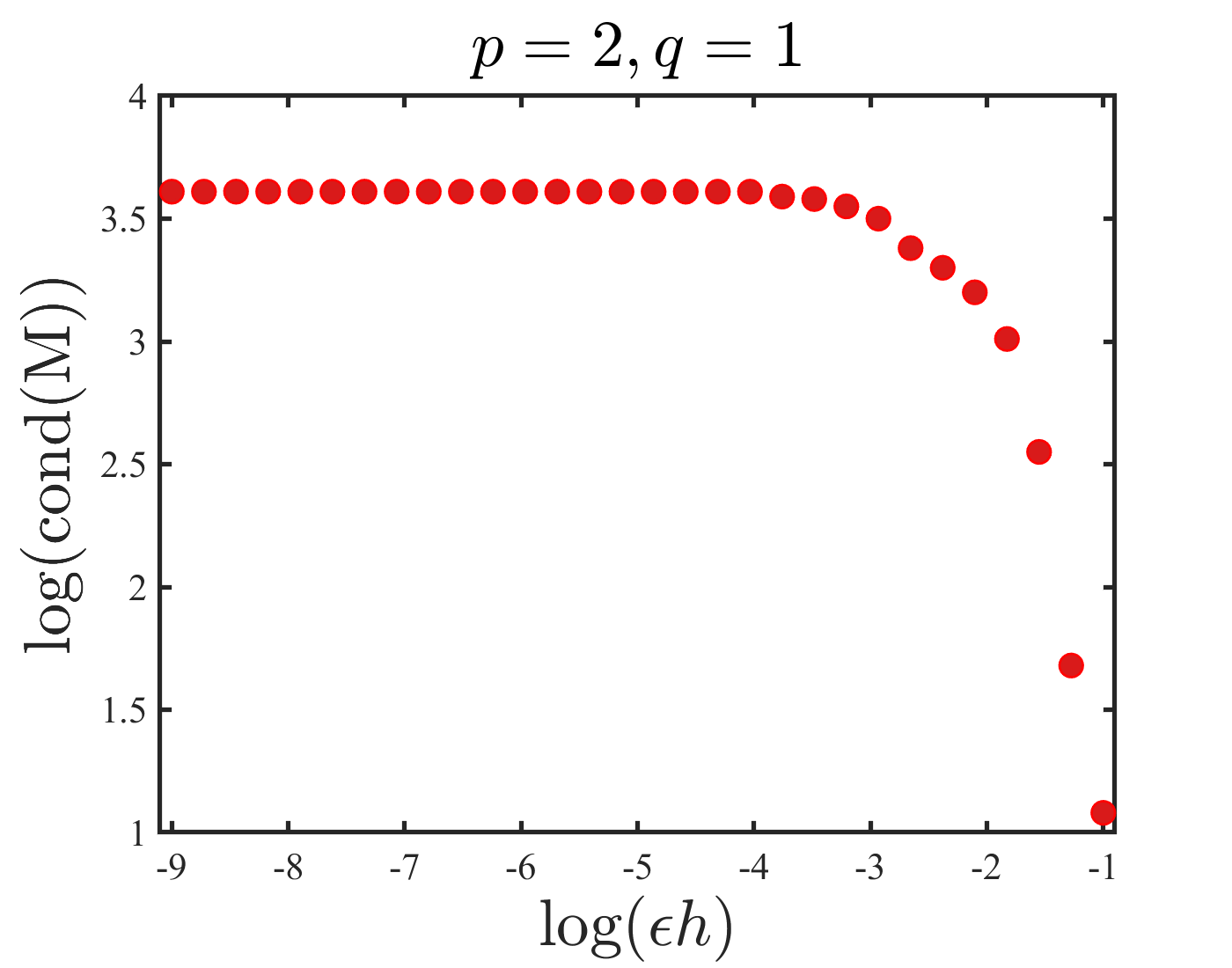}
\includegraphics[width=0.32\textwidth]{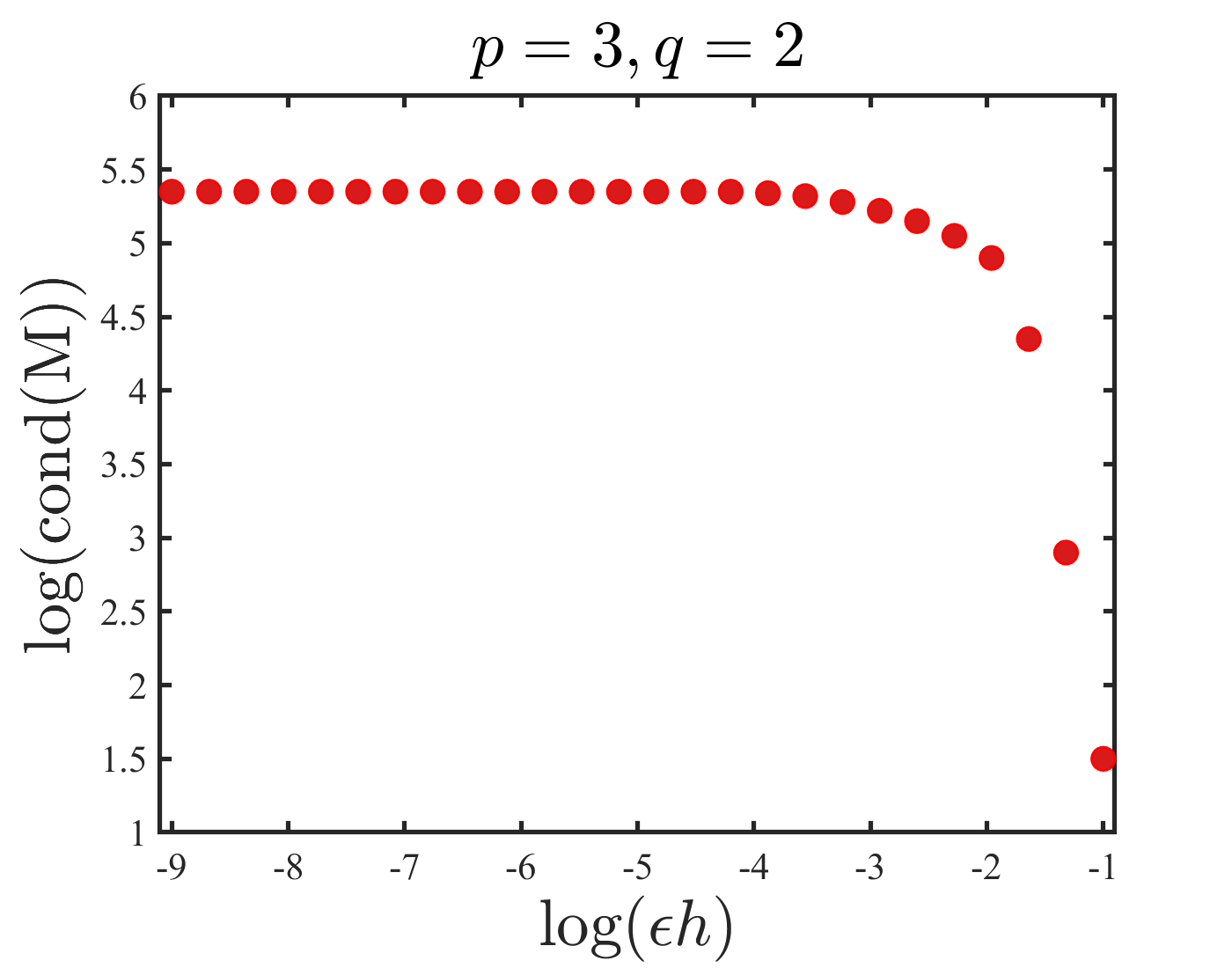} 
\includegraphics[width=0.32\textwidth]{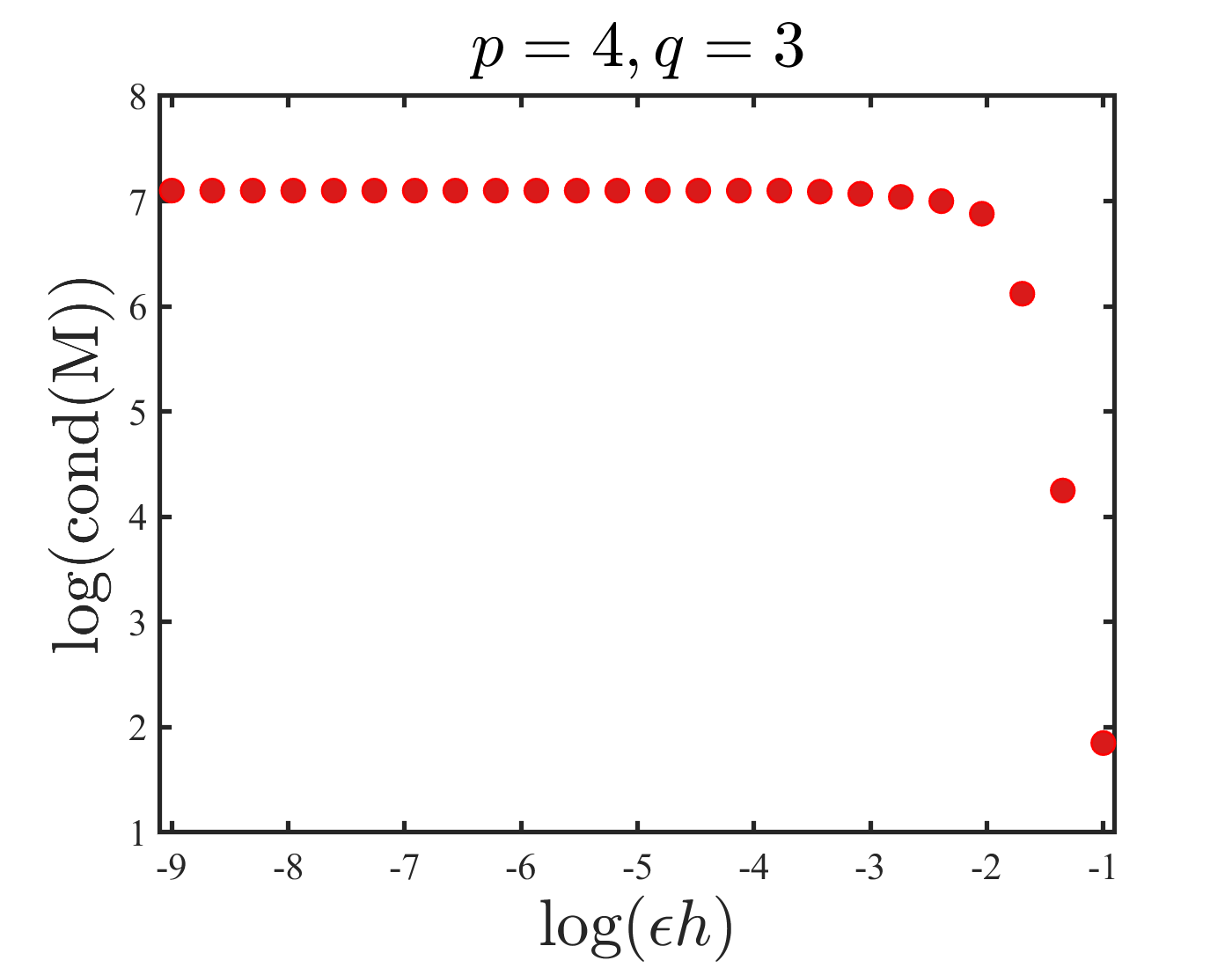}\\
\includegraphics[width=0.32\textwidth]{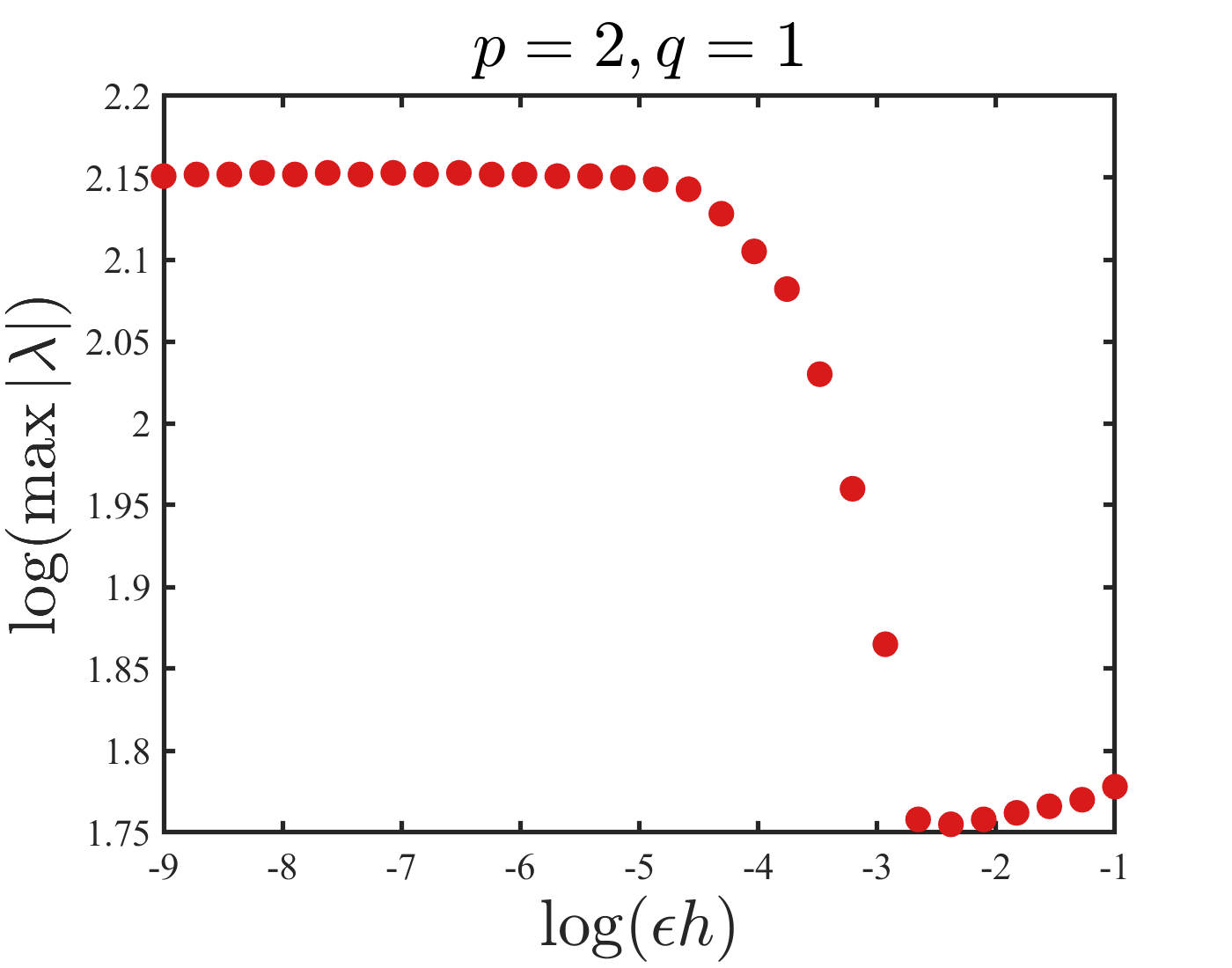}
\includegraphics[width=0.32\textwidth]{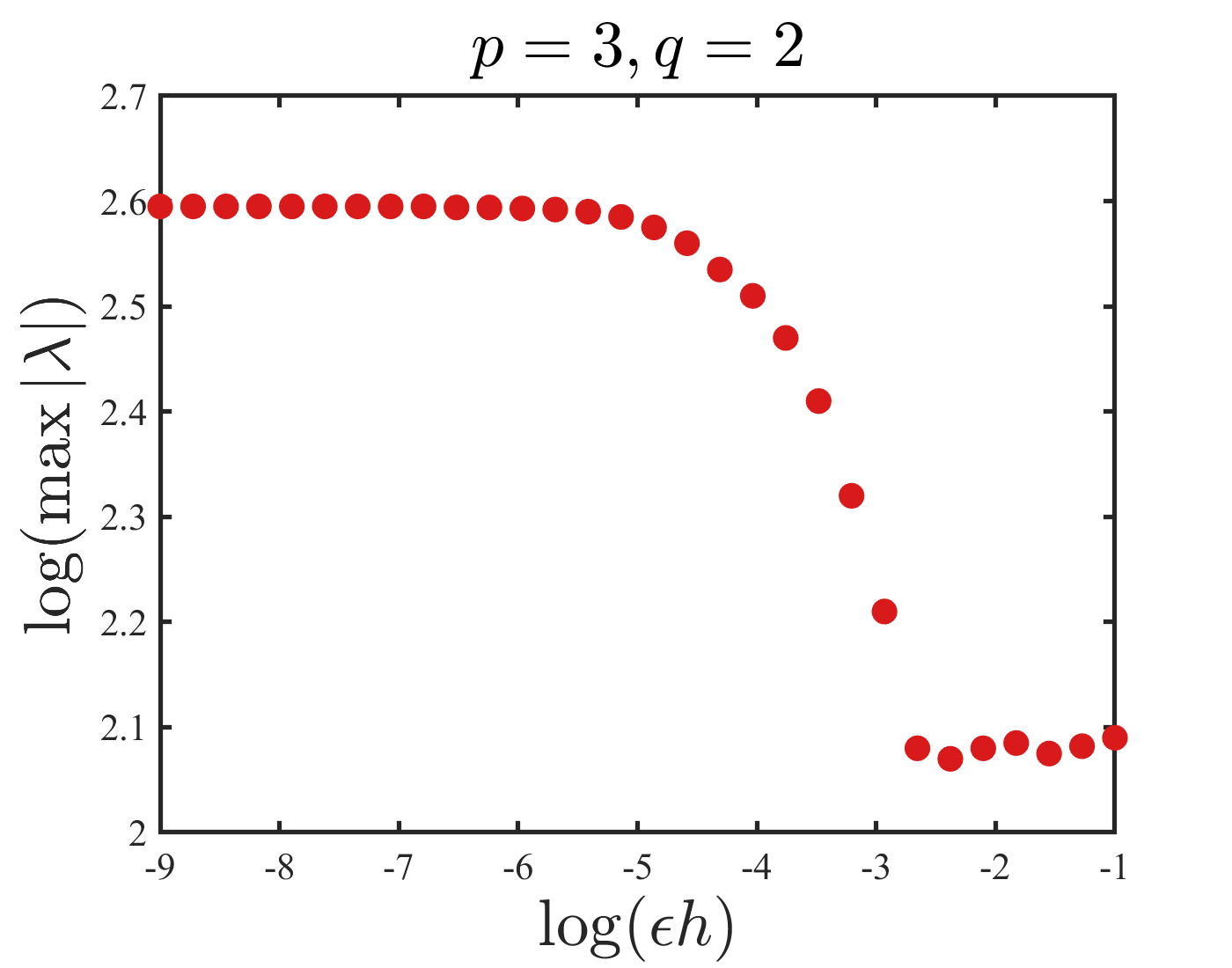}
\includegraphics[width=0.32\textwidth]{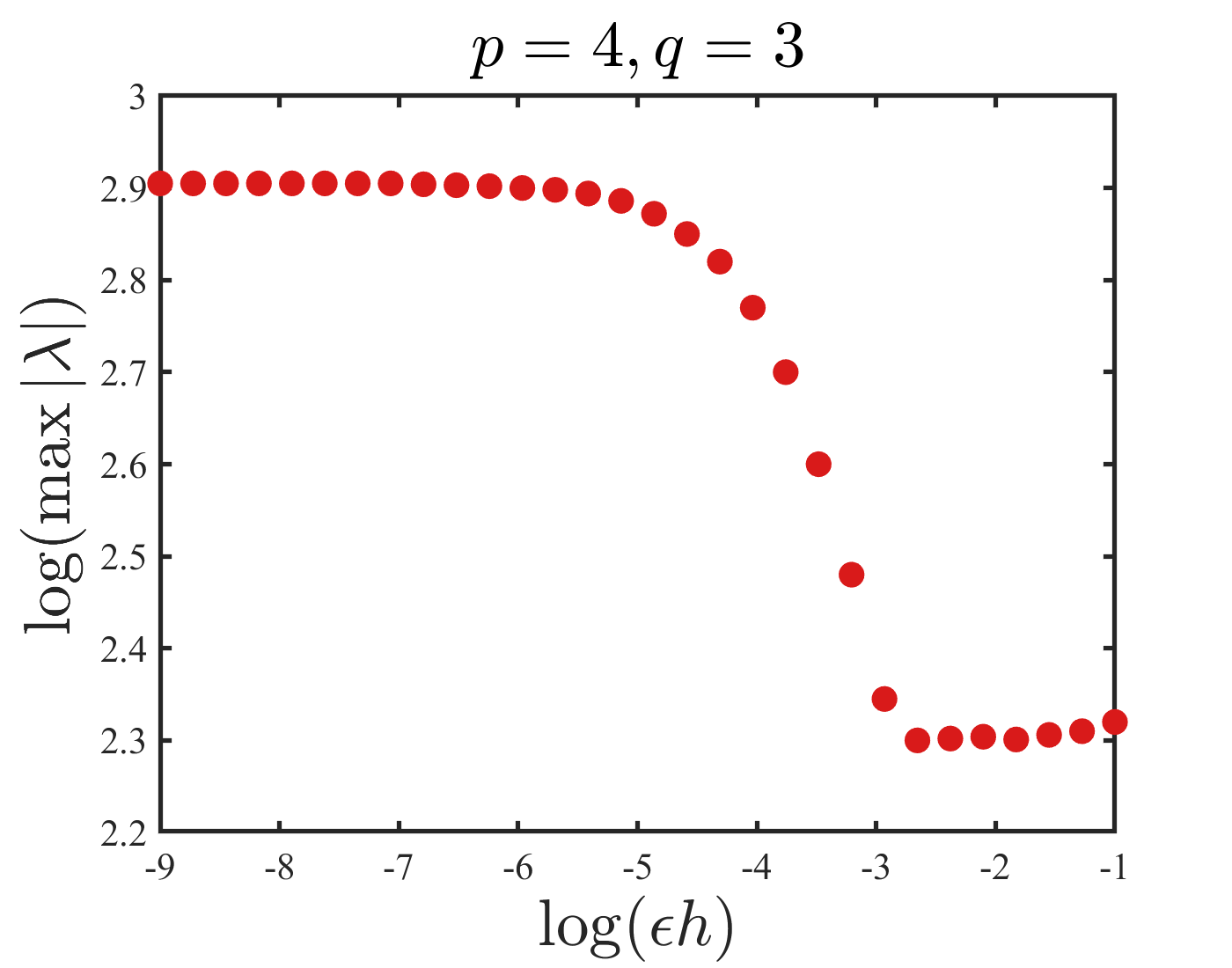}
\end{center}
\caption{Example \ref{ex1}: \textbf{Top:} Condition number of mass matrix $M$; \textbf{Bottom:} Maximum absolute value of semi-discrete matrix eigenvalues $\lambda$.}
\label{fig2}
\end{figure}

%%%%%%%%%%%%%%%%%%%%%%%%
\begin{example}\label{ex2} Accuracy test for two dimensional problem
\end{example}

We now consider the two-dimensional wave equation 
\begin{equation}\label{equation2}
    u_{tt} = u_{xx} + u_{yy},
\end{equation}
In this example, two different domains are considered:
\begin{itemize}
\item Rectangular domain $\Omega = [-\pi,\pi]^2$, and the initial conditions and boundary conditions are given such that the exact solution is $u(x,y,t) = \sin(x)\sin(y)\cos(\sqrt{2}t)$. Small cut elements exist in both $x$- and $y$-directions. The corresponding mesh configuration is illustrated in Figure \ref{fig:cutconfig}. And parameters $\epsilon_x$ and $\epsilon_y$ are utilized to evaluate the relative cut-cell sizes in the $x$- and $y$-directions, respectively, see Figure \ref{fig:cutconfig_a}.

\item A circular domain $\Omega=\{(x,y): x^2+y^2\leq 1\}$, and initial conditions and homogeneous
Dirichlet boundary are imposed to match the exact solution $u(x,y,t)=J_0\left( \alpha_3 \sqrt{x^2+y^2} \right) \cos(\alpha_3 t)$, where, $J_0$ is the zeroth-order Bessel function of the first kind, and
$\alpha_n$ denote its $n$-th positive zero. %and $\omega_3=\alpha_3$.
The fictitious domain is chosen as
$\Omega'=[-1.1,1.1]^2$,
which is discretized by a uniform Cartesian background mesh, as illustrated in Figure \ref{fig:cutconfig_b} 
\end{itemize}
For both cases, we set the final time $t = 0.25$ and consider the polynomial degree $p = 2, 3, 4$, $q = p-1$. 

In Figure \ref{fig4} and ~\ref{fig6}, we show the $L^2$ error of $u_h$ on rectangular domain and circle domain, respectively.
Those numerical results demonstrate that the proposed CutEDG scheme can achieve the optimal order $p+1$ in both cases, further confirming its robustness and effectiveness. Figure~\ref{fig6} reports the condition numbers of the mass
matrix for different polynomial degrees. The condition numbers of
the proposed CutEDG scheme remain of the same order of magnitude as
those of the high-order continuous CutFEM~\cite{sticko2019higher},
with slightly smaller values for the polynomial degrees considered
here. This indicates that the stabilization incorporated into the
CutEDG formulation effectively controls the ill-conditioning
associated with the cut cells, while maintaining a condition number
comparable to that of the corresponding high-order CutFEM
discretization. Nevertheless, the condition number still increases
with the polynomial degree $p$, indicating that the dependence of
the mass-matrix conditioning on the polynomial degree persists.

\begin{figure}[htbp]
\begin{subfigure}{0.48\textwidth} 
    \centering
    \includegraphics[width=\linewidth]{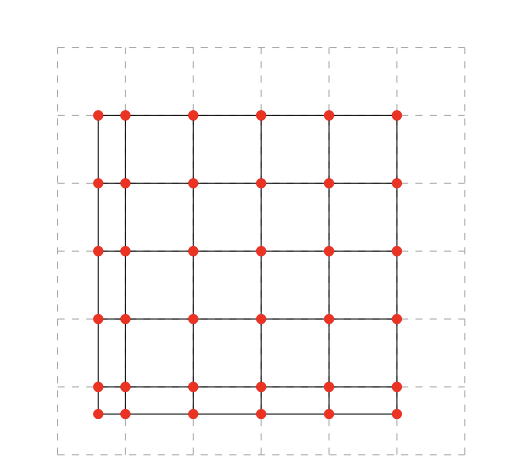} 
    \caption{Rectangular domain.} 
    \label{fig:cutconfig_a}
\end{subfigure}
\hfill 
\begin{subfigure}{0.48\textwidth}
\centering
    \includegraphics[width=\linewidth]{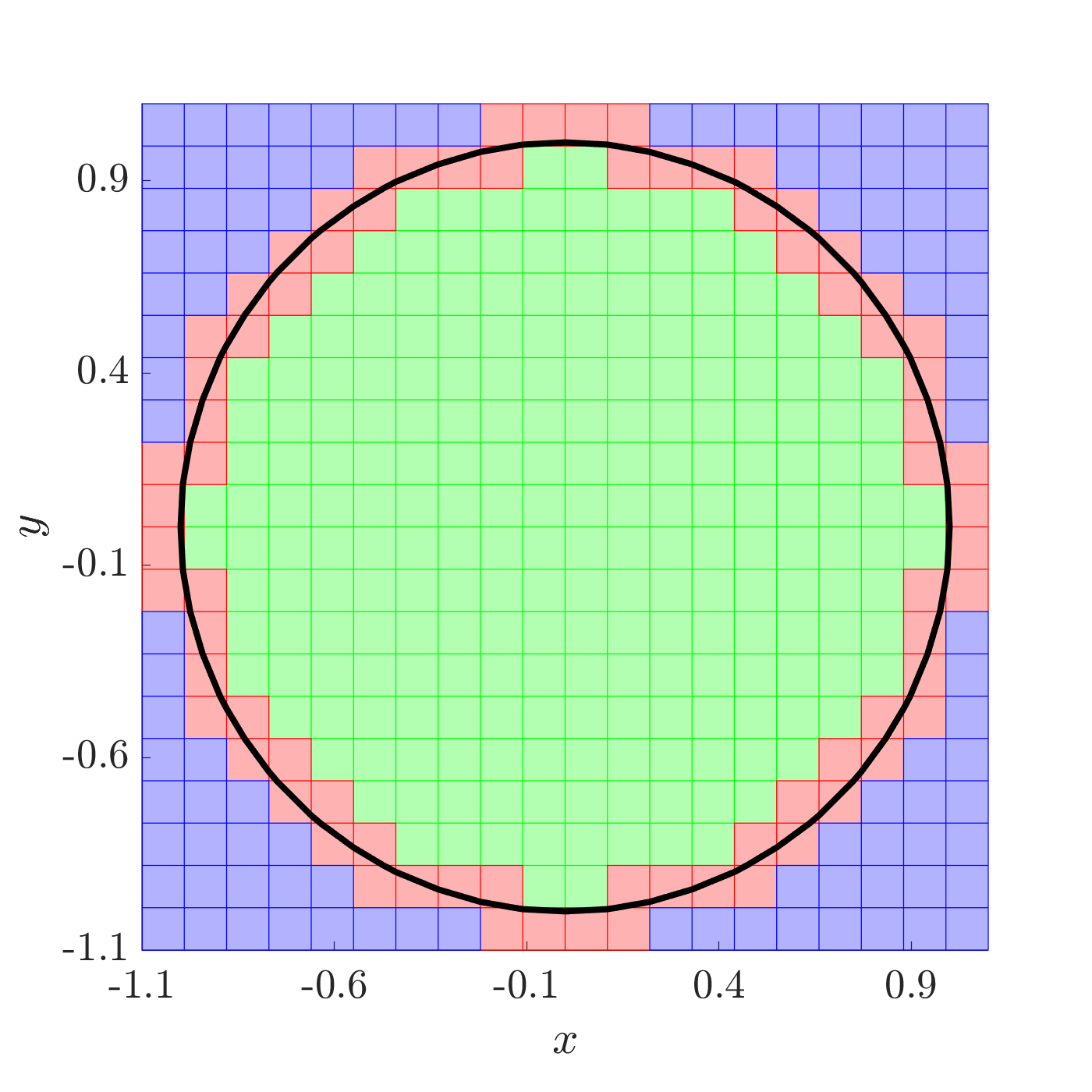}
    \caption{Circle domain.}
    \label{fig:cutconfig_b}
\end{subfigure}
\caption{Example \ref{ex2}: Illustration of the two mesh configurations. }
\label{fig:cutconfig}
\end{figure}

\begin{figure}[!ht]
\centering 
\includegraphics[width=0.32\textwidth]{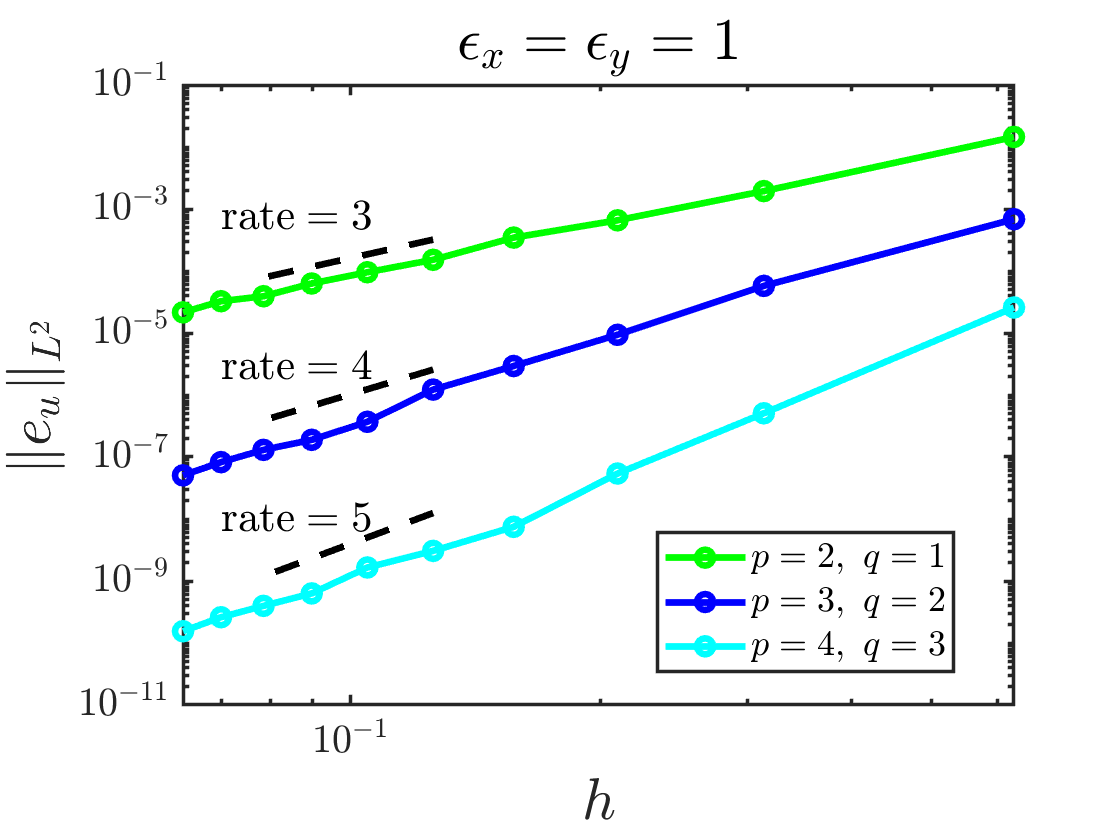}
\includegraphics[width=0.32\textwidth]{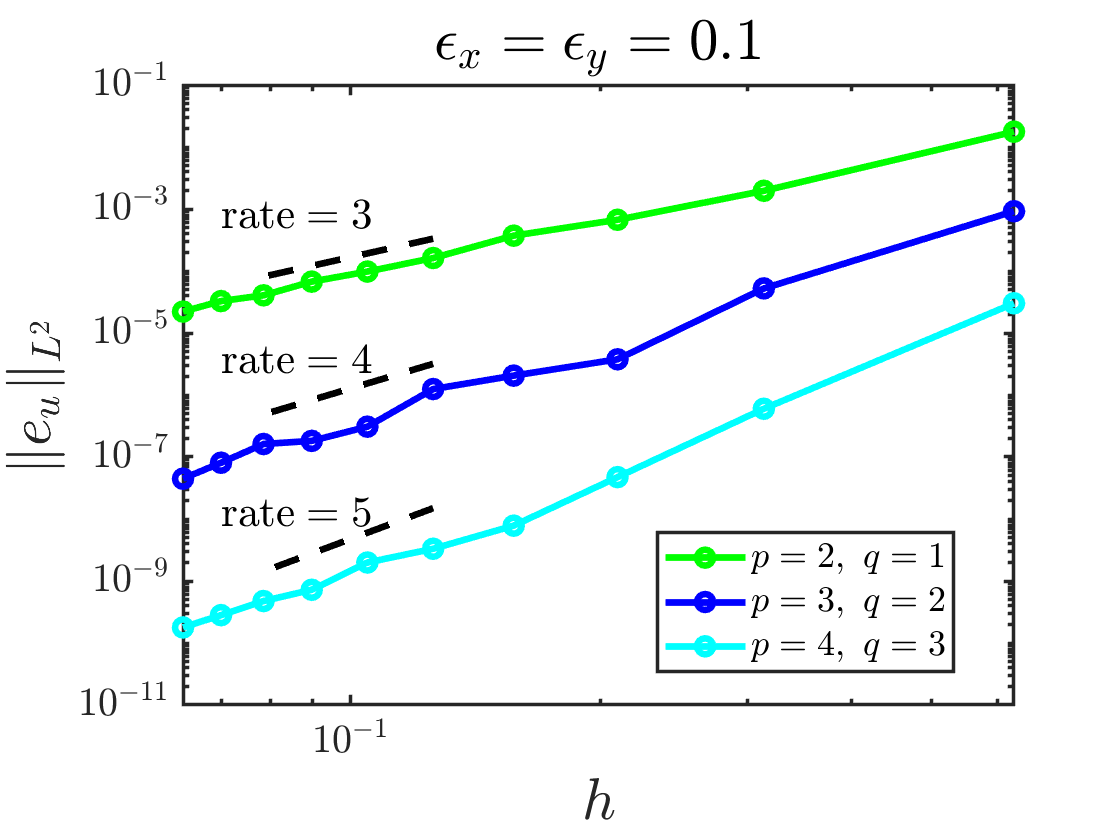}
\includegraphics[width=0.32\textwidth]{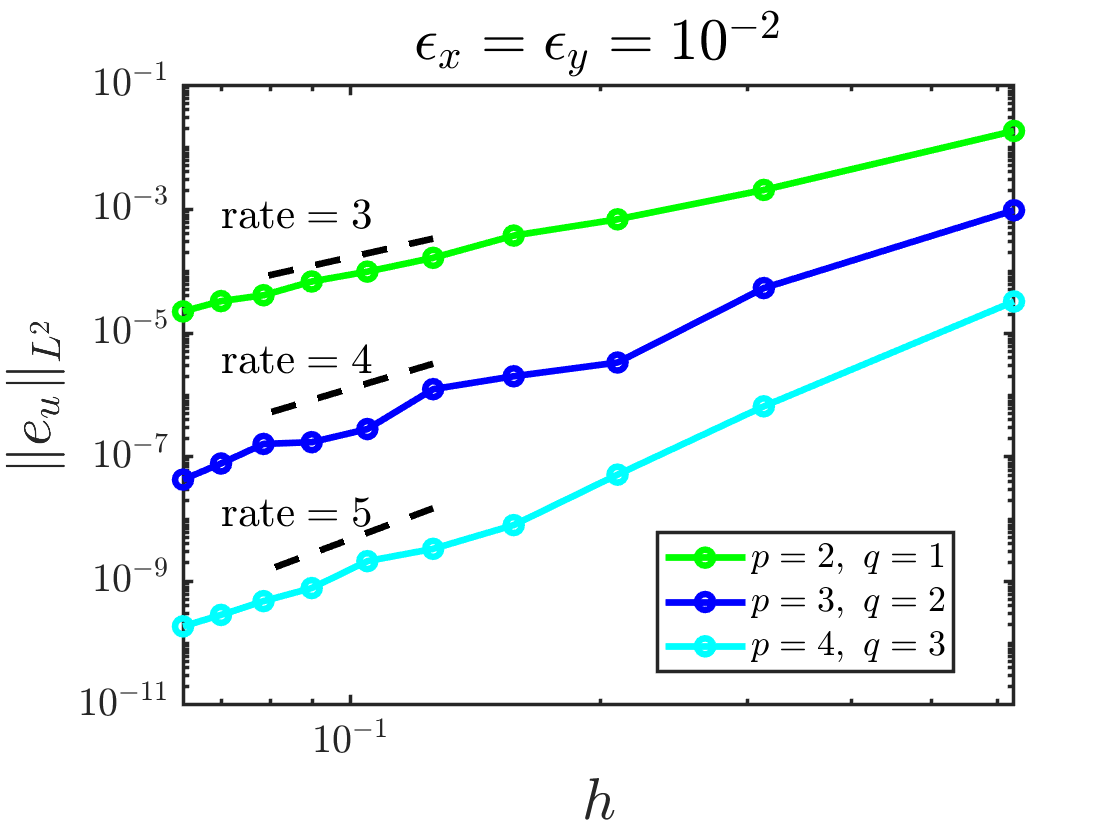}\\
\includegraphics[width=0.32\textwidth]{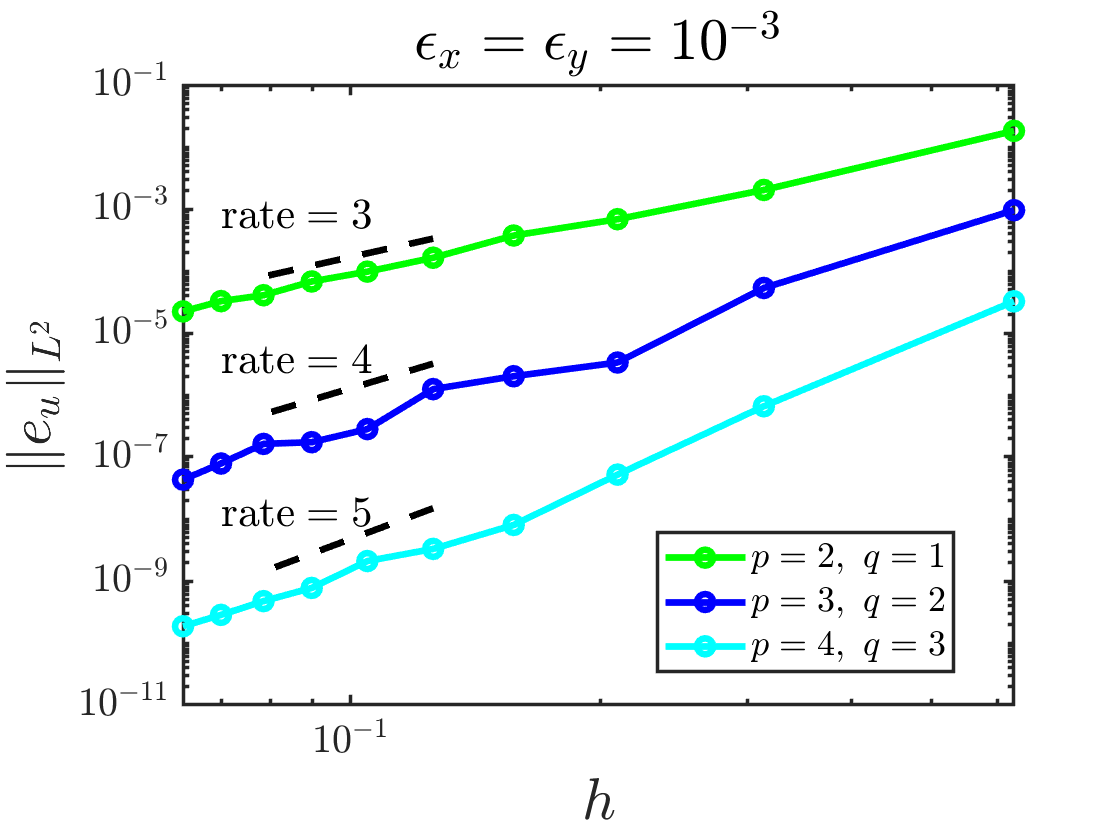}
\includegraphics[width=0.32\textwidth]{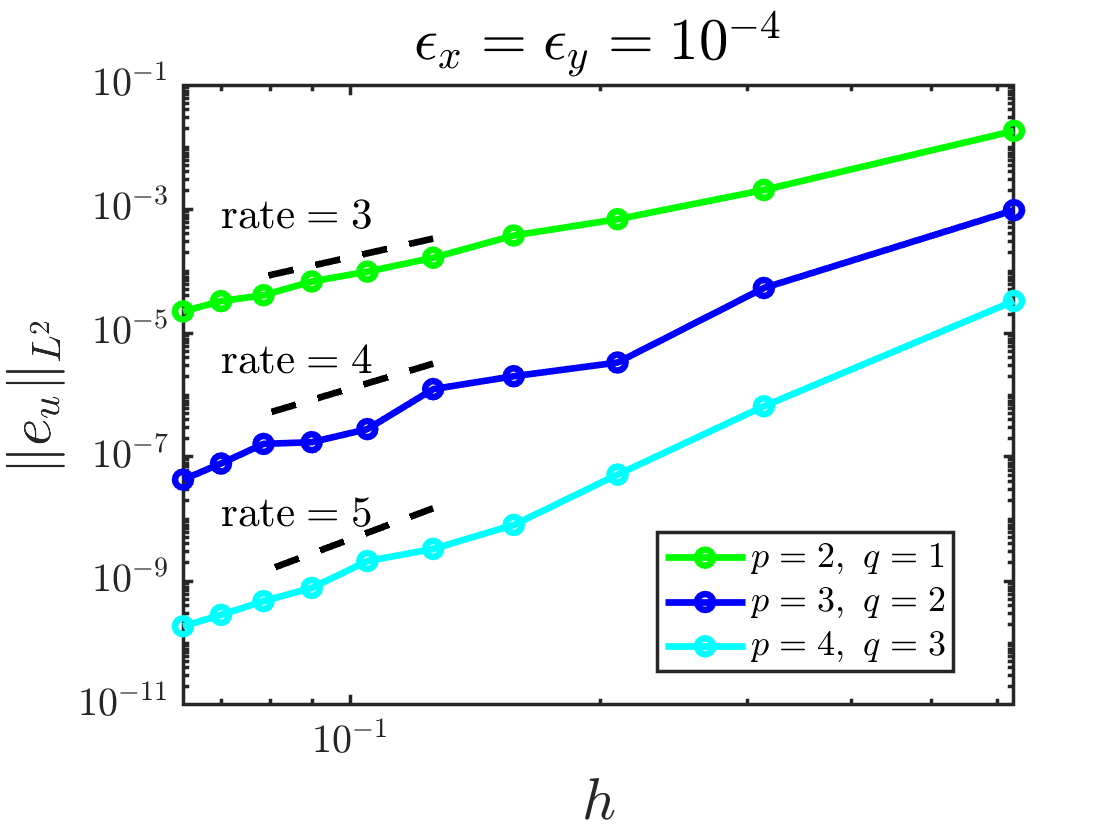}
\includegraphics[width=0.32\textwidth]{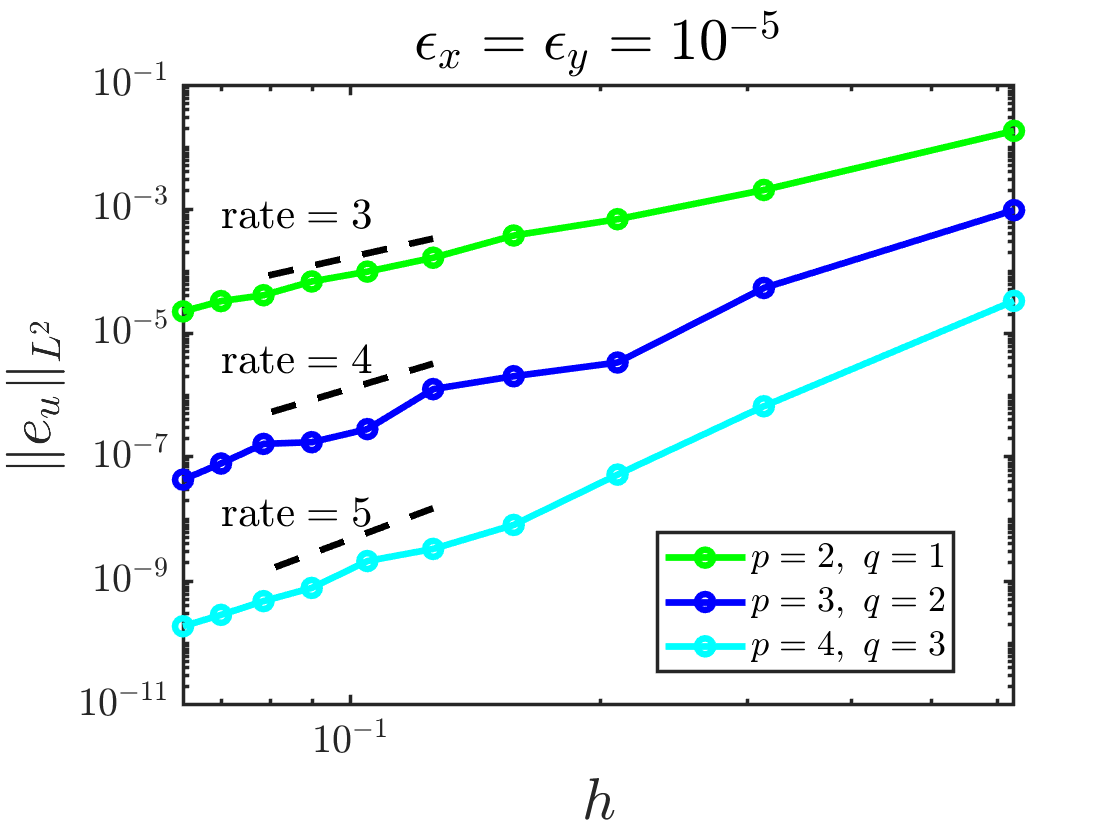}
\caption{Example \ref{ex2}: $L^2$ errors with different cut sizes on rectangular domain at $t=0.25$.}
\label{fig4} 
\end{figure}

\begin{figure}[!ht]
\begin{center}
\includegraphics[width=3.0in]{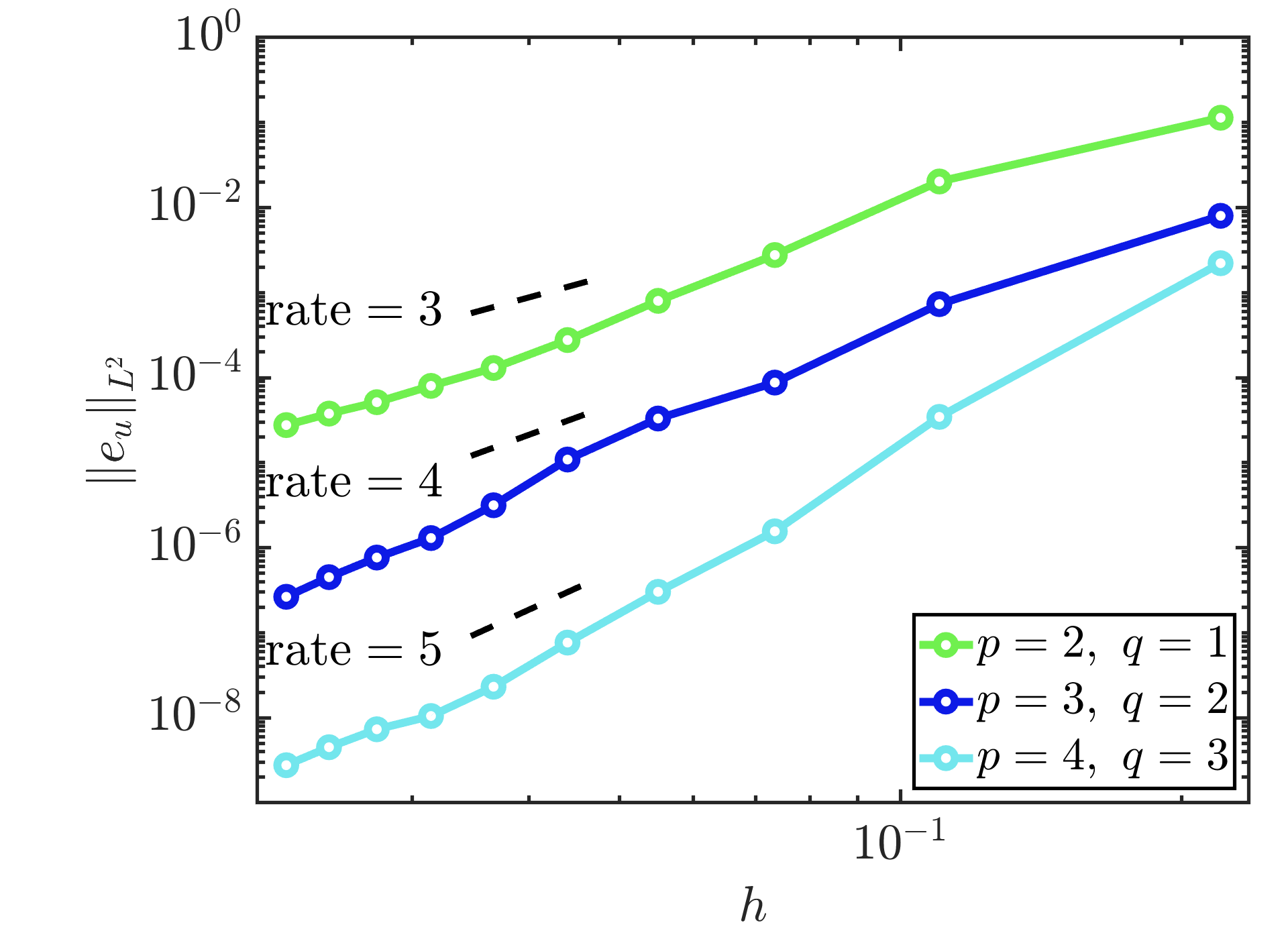}
\includegraphics[width=3.0in]{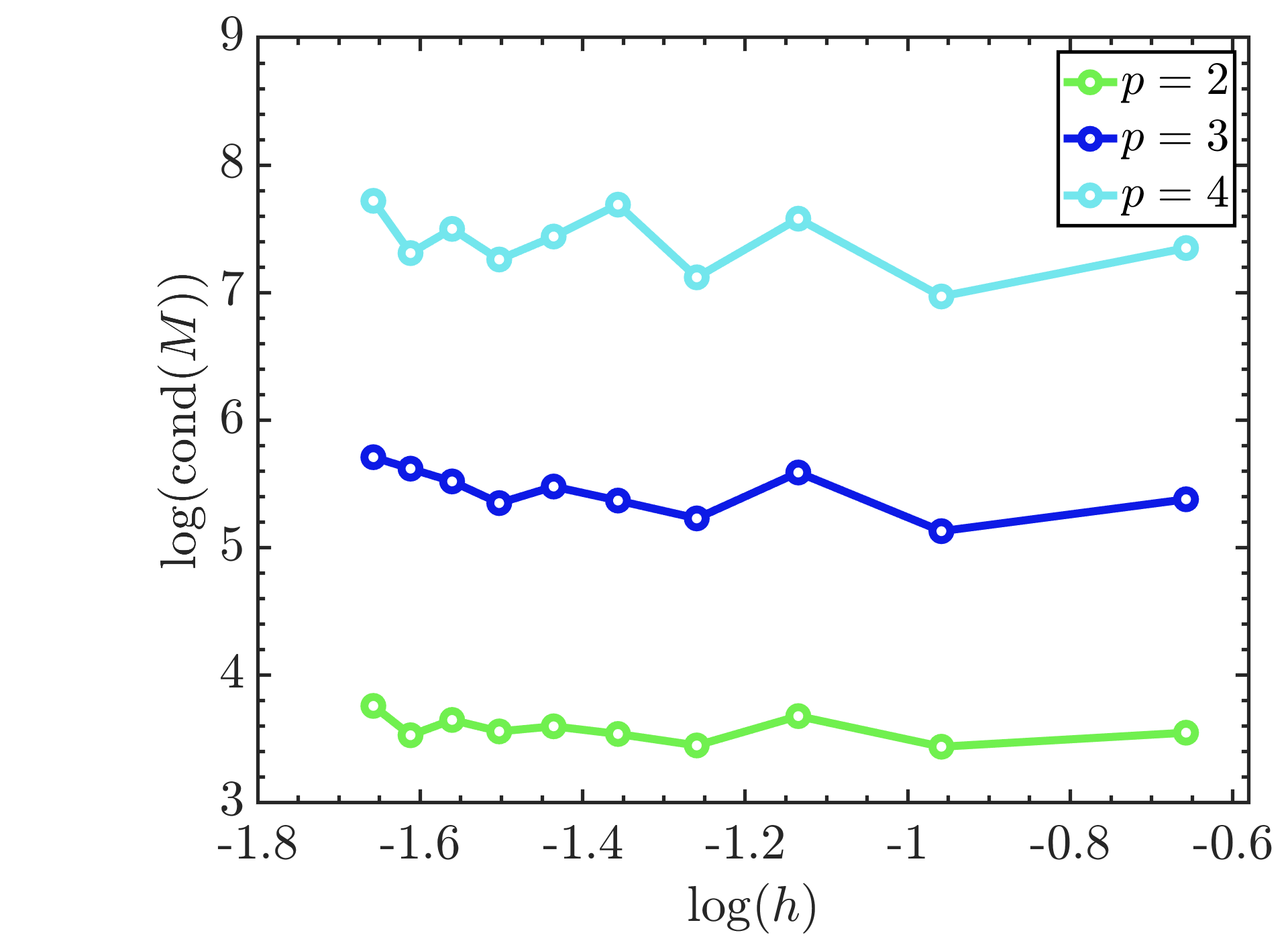}
\end{center}
\caption{Example \ref{ex2}: Left: $L^2$ errors of the cutEDG scheme on a circle domain at t = 0.25; right: Mass matrix condition number for different polynomial degrees.} 
\label{fig6}
\end{figure}

%%%%%%%%%%%%%%%%%%%%%%%%%%%%%%%%
\begin{example}\label{ex4} Wave propagation around a triangular obstacle 
\end{example}
We consider the wave equation \eqref{equation2} on the computational domain $\Omega = [-6,6]^2$ with a triangular hole whose vertices are $(-1,0),(1,0),$ and $(0,\sqrt{3})$. Homogeneous Dirichlet boundary conditions are enforced along the boundary of the domain both internal boundary and outer boundary. The initial conditions are chosen as
\begin{equation*}
    u|_{t=0} = \dfrac{1}{10}y(36-x^2-y^2)(\sqrt{3}x-y+\sqrt{3})(\sqrt{3}x+y-\sqrt{3})e^{-(x^2+y^2)},\quad u_t|_{t=0} = 0.
\end{equation*}

\begin{figure}[!ht]
\begin{center}
\includegraphics[width=3.0in]{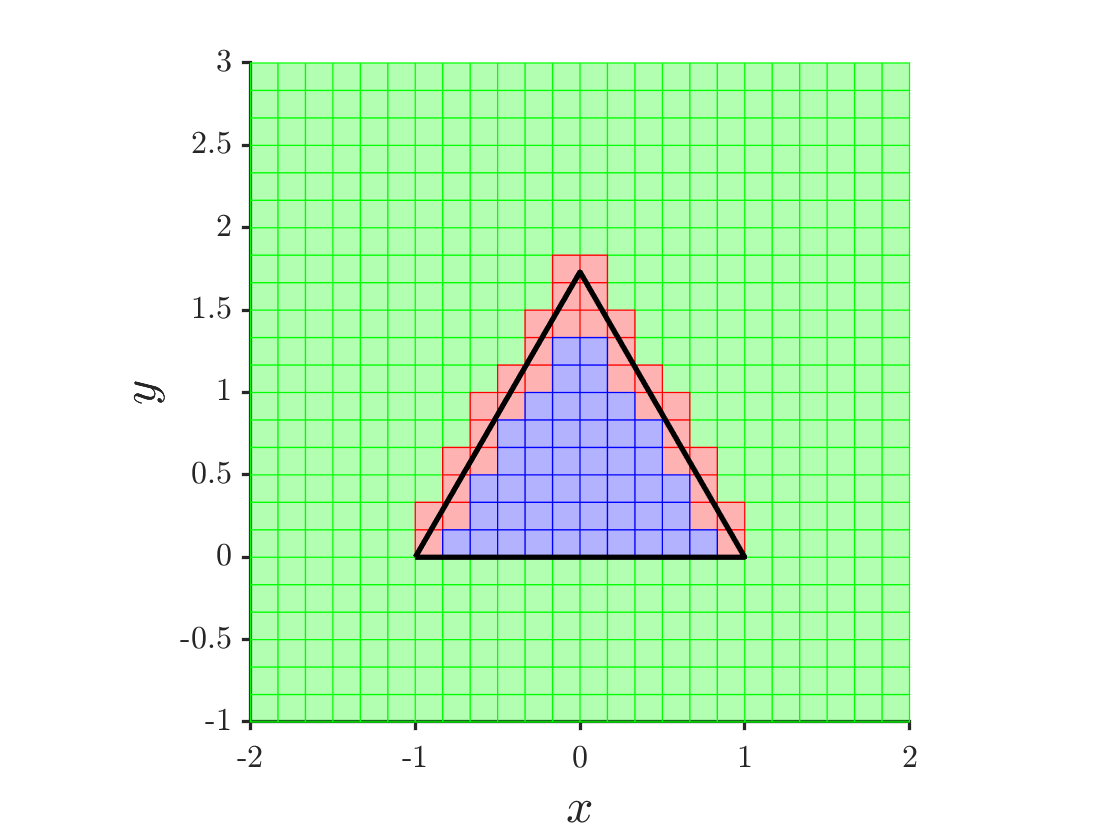}
\end{center}
\caption{Example \ref{ex4}: 
Illustration of Cartesian mesh discretization containing a triangular hole. The mesh is non-conforming to the triangular boundary.}
\label{fig8}
\end{figure}
We adopt Cartesian grids for spatial discretization, and the corresponding mesh partition schematic is illustrated in Figure \ref{fig8}. We choose $p=2$ and $q=1$ in the numerical simulation and the numerical solutions are computed at $t=0$, $0.2$, $0.5$, and $0.8$. As shown in Figure \ref{fig9}, the numerical solutions exhibit clear convergence as mesh refined, demonstrating the robustness of the proposed method for wave propagation in domains with non-grid-aligned boundaries.

\begin{figure}[!ht]
\begin{center}
\includegraphics[width=0.24\textwidth,trim={2.2cm 0.4cm 2.2cm 0.4cm}, clip=true]{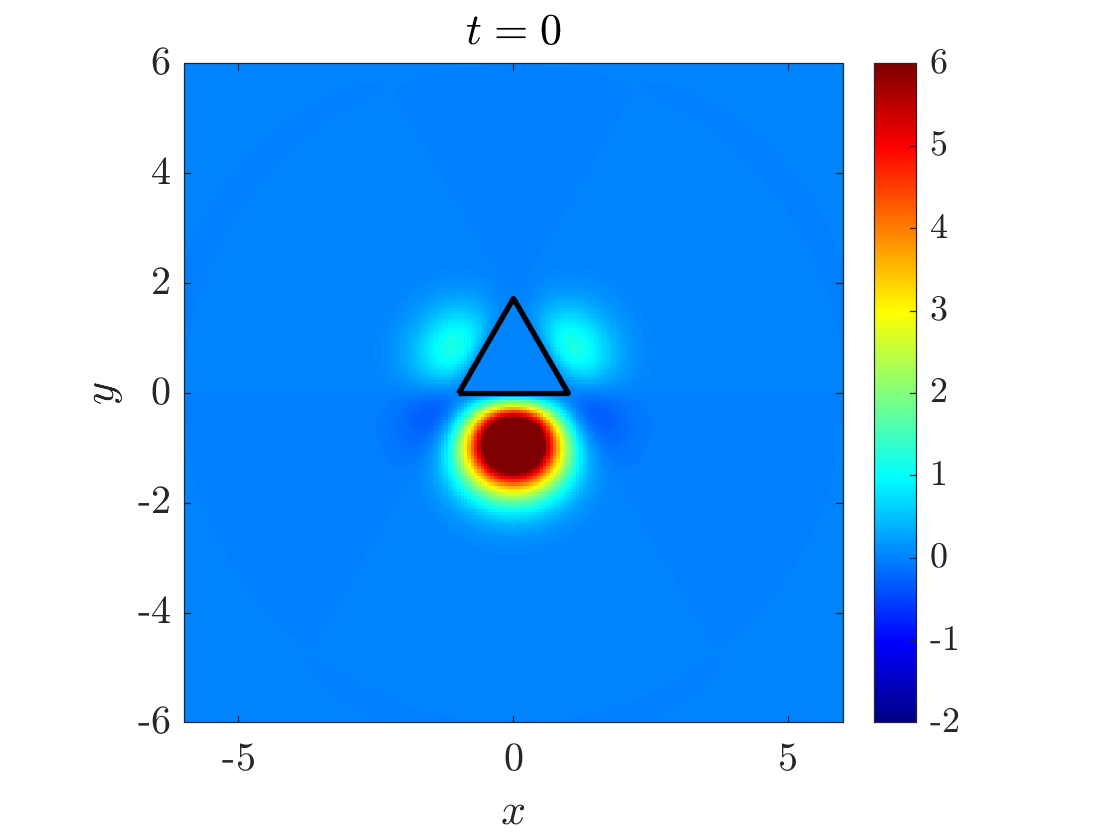}
\includegraphics[width=0.24\textwidth,trim={2.2cm 0.4cm 2.2cm 0.4cm}, clip=true]{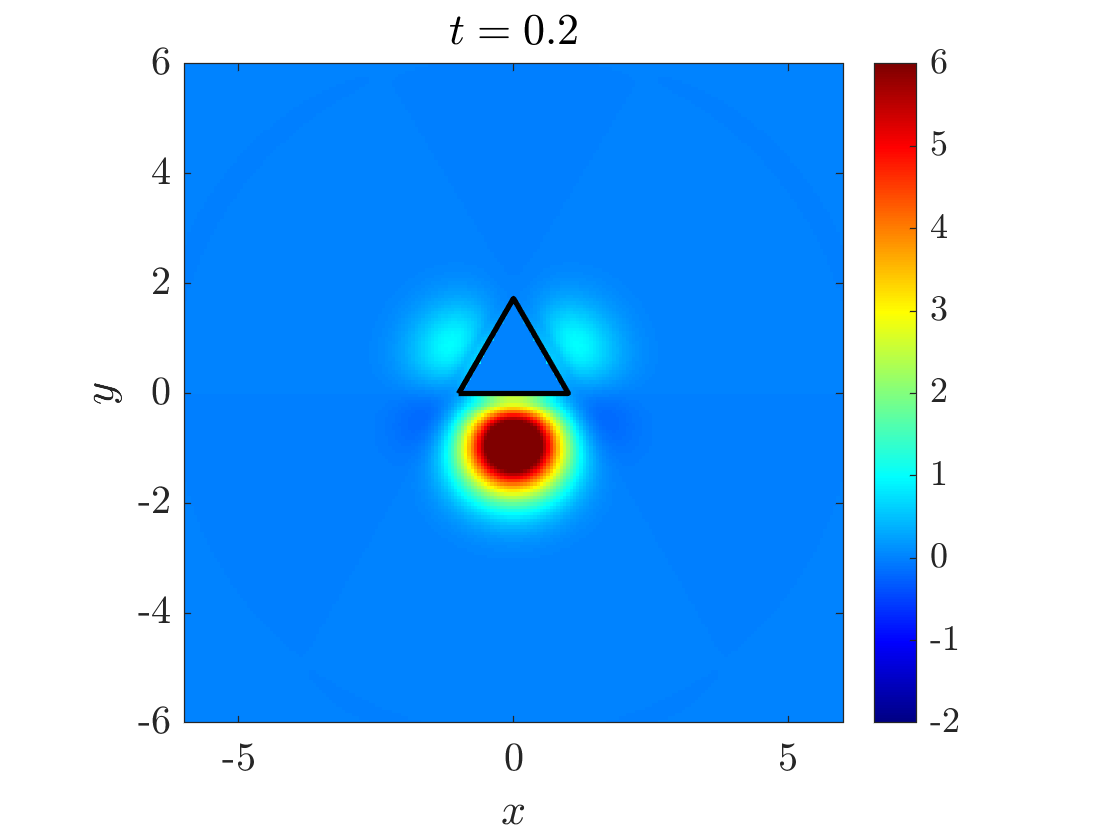}
\includegraphics[width=0.24\textwidth,trim={2.2cm 0.4cm 2.2cm 0.4cm}, clip=true]{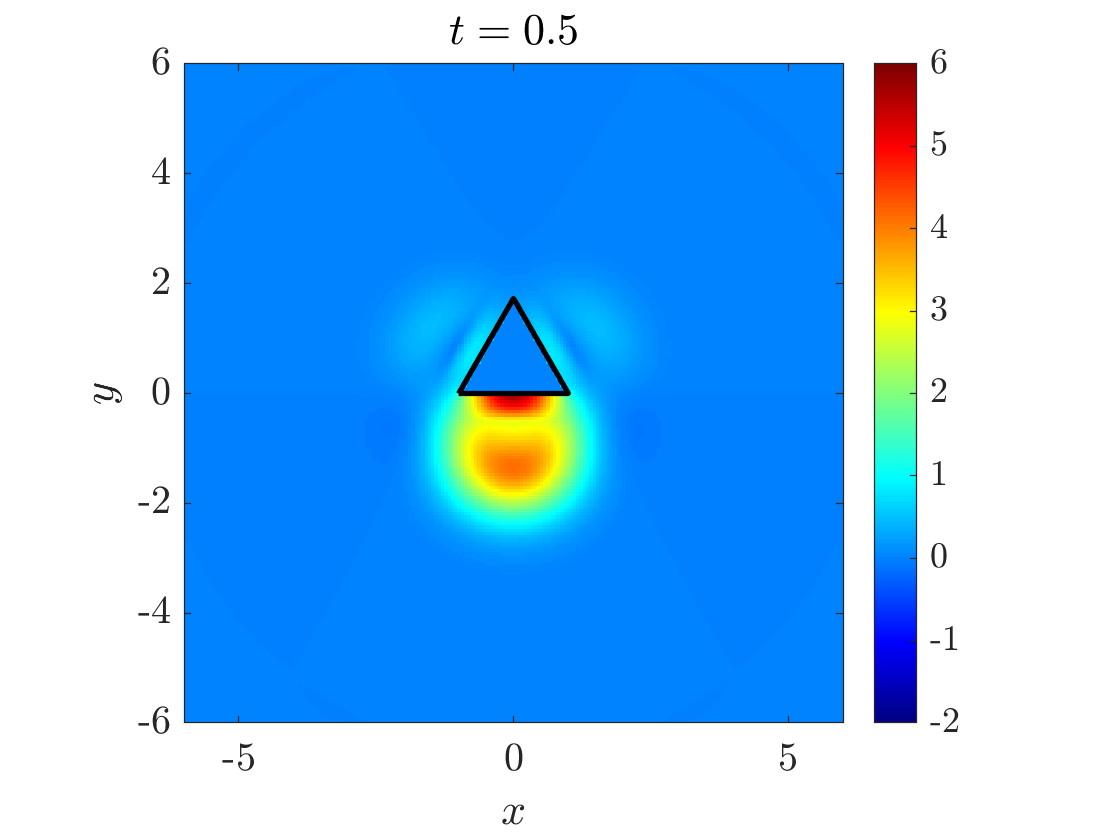}
\includegraphics[width=0.24\textwidth,trim={2.2cm 0.4cm 2.2cm 0.4cm}, clip=true]{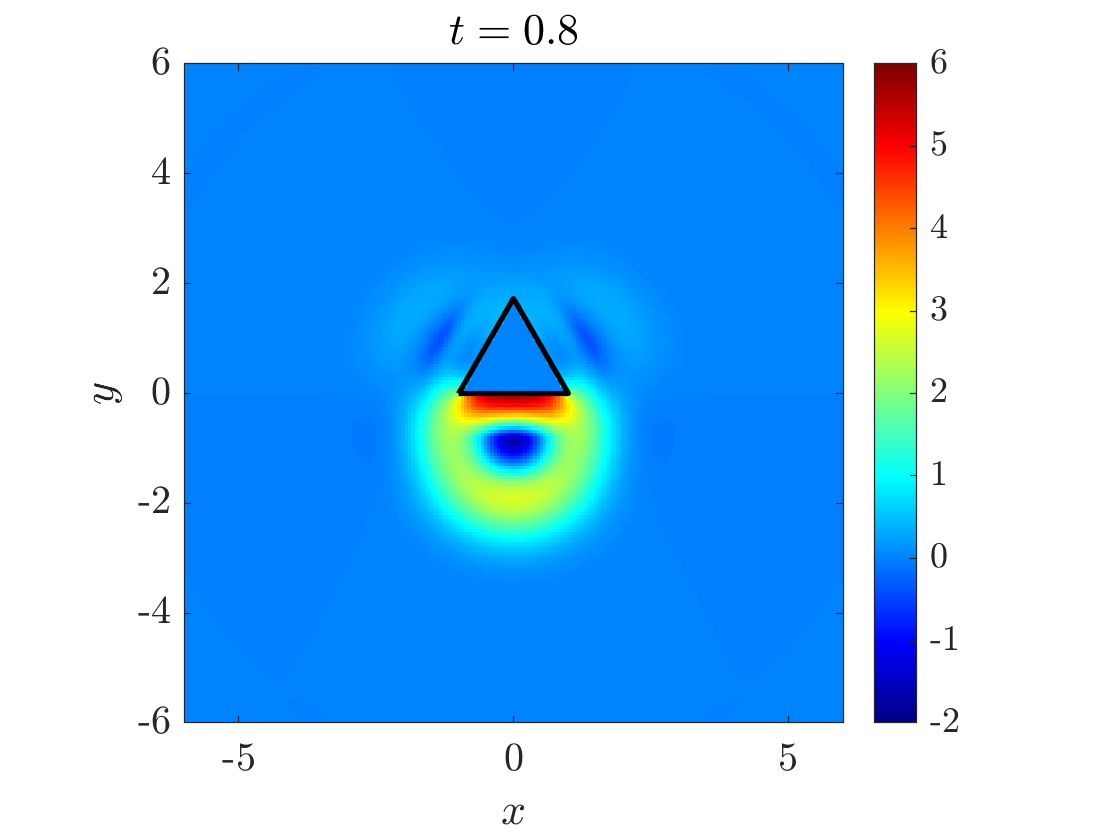}\\
\includegraphics[width=0.24\textwidth,trim={2.2cm 0.4cm 2.2cm 0.4cm}, clip=true]{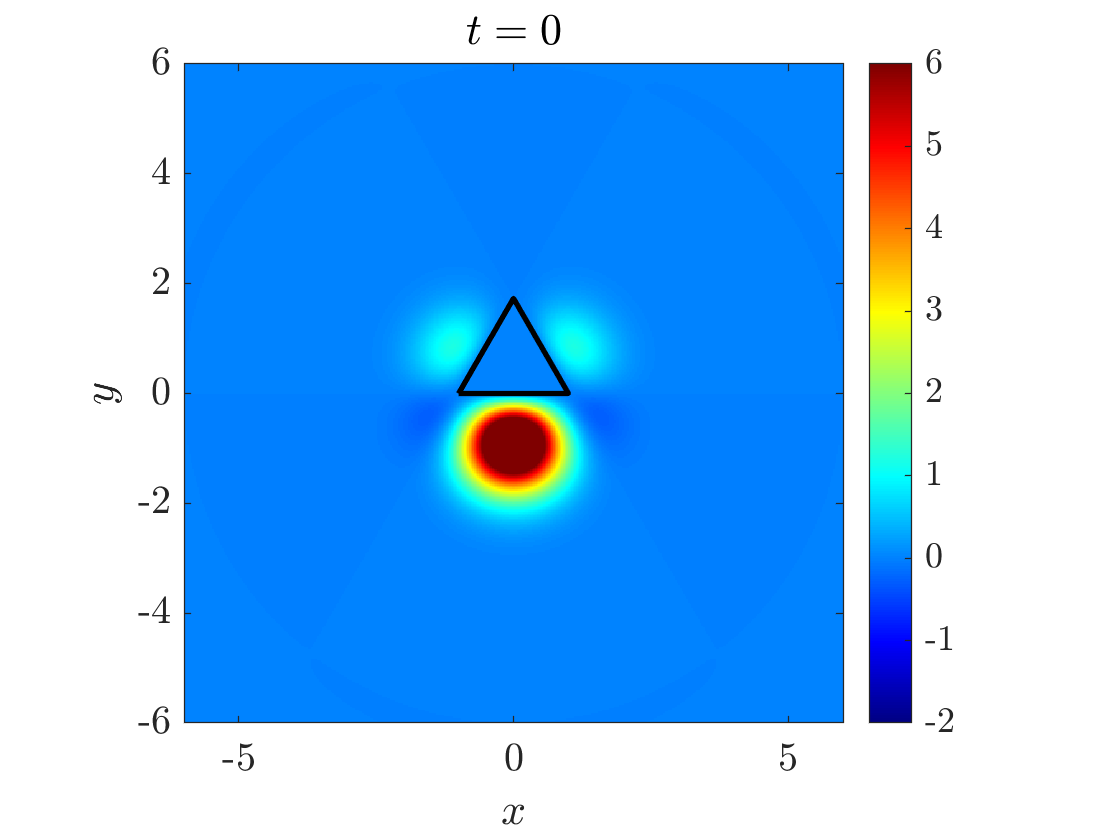}
\includegraphics[width=0.24\textwidth,trim={2.2cm 0.4cm 2.2cm 0.4cm}, clip=true]{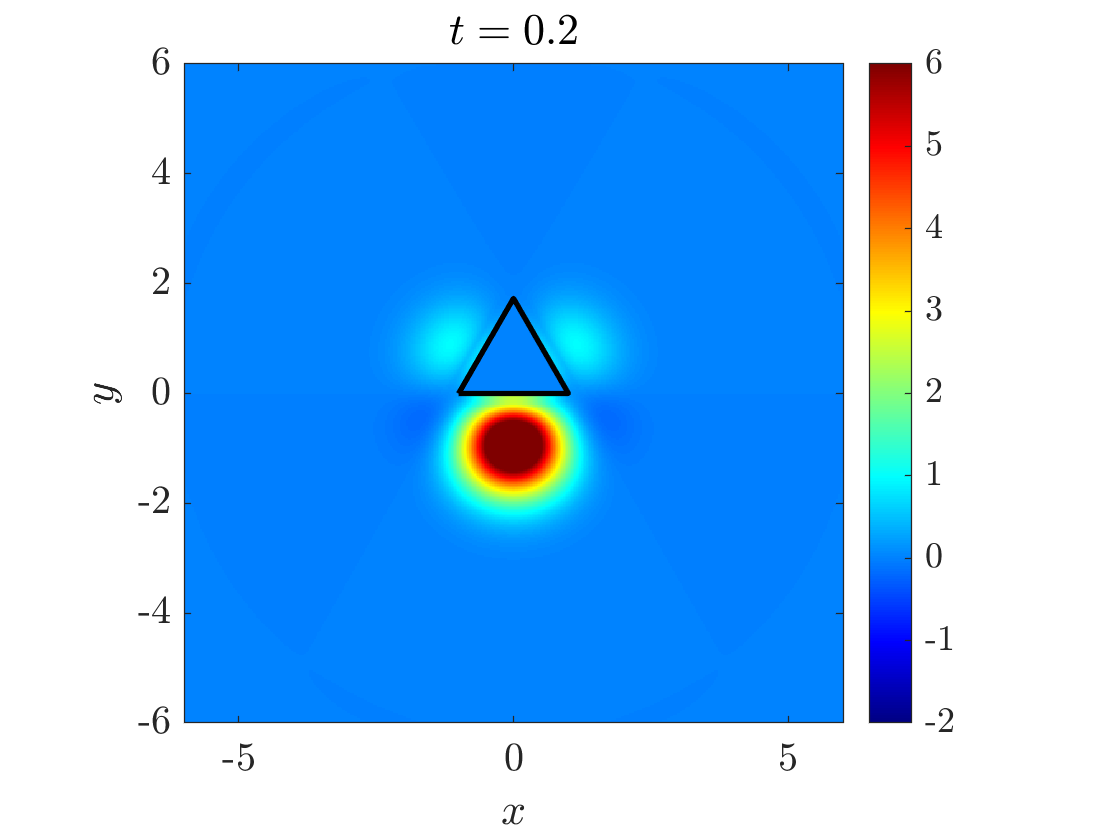}
\includegraphics[width=0.24\textwidth,trim={2.2cm 0.4cm 2.2cm 0.4cm}, clip=true]{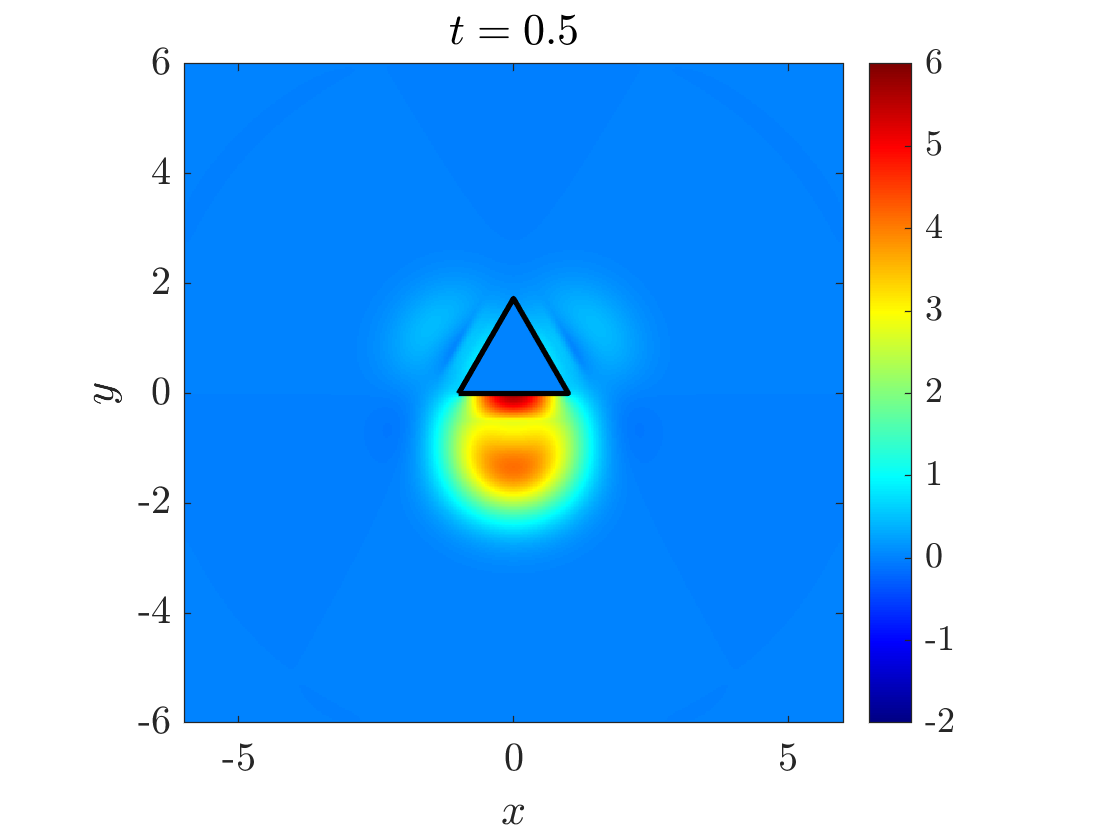}
\includegraphics[width=0.24\textwidth,trim={2.2cm 0.4cm 2.2cm 0.4cm}, clip=true]{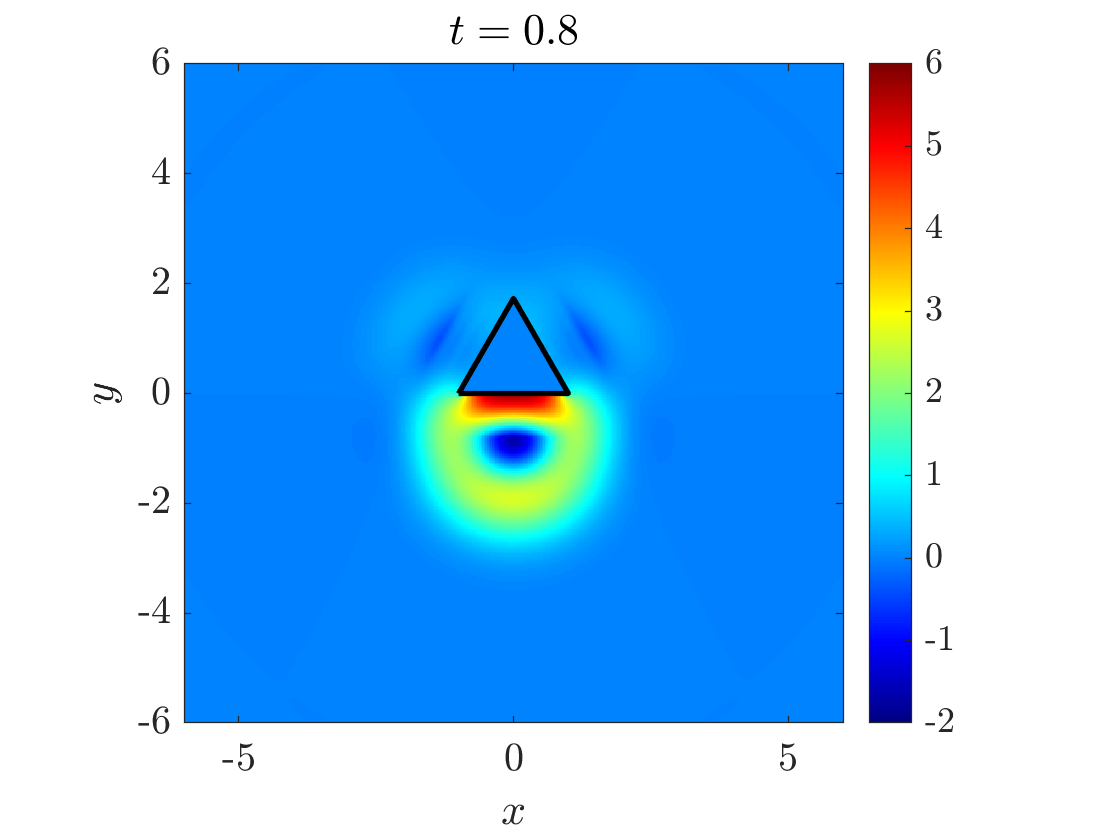}
\end{center}
\caption{Example \ref{ex4}: Snapshots of the numerical solution at t = 0, 0.2, 0.5, 0.8 (from left to right), and mesh $N=200$ (\textbf{top}) and $N = 400$ (\textbf{bottom}) computed on a Cartesian grid. 
}
\label{fig9}
\end{figure}

%%%%%%%%%%%%%%%%%%%%%%%%%%%%%
\begin{example}\label{ex5} An Immersed Outer Problem
\end{example}
We further consider an exterior problem whose geometry is illustrated in Figure \ref{fig10}. The interior boundary is given by the zero level set of the function 
\begin{equation*}
    \phi(x,y) = -\left(s^4(x^4 + y^4) + \dfrac{1}{4}s^2xy - 2s^2x^2 +2s^2y^2 - \frac{1}{4}\right)
\end{equation*}
where $s = 1.5$. The computational domain $\Omega$ is defined as
\begin{equation}
    \Omega = \{(x,y)\in \mathbb{R}^2: x,y\in(-1.5,1.5):\phi(x,y)<0\}.
\end{equation}
The initial conditions are homogeneous with $u|_{t=0} = 0, u_t|_{t=0} = 0$. A homogeneous Neumann boundary condition, $\dfrac{\partial u}{\partial \mathbf{n}}|_{\Gamma_I} = 0$, is imposed on the internal boundary $\Gamma_I$.
where $\mathbf{n}$ denotes the unit outward normal to $\Gamma_I$. On the external boundaries, Dirichlet conditions are prescribed:
\begin{equation*}
u =
\begin{cases}
g_D(x, t), & x \in \Gamma_B, \\
0, & x \in \Gamma_L \cup \Gamma_R \cup \Gamma_U,
\end{cases}
\end{equation*}
where $g_D$ is given by
\begin{equation*}
    g_D(x,t) = \cos\left(\frac{\pi}{3}x\right)e^{-(t-t_c)^2/\sigma^2},\qquad \sigma = 0.25, t_c = 3.
\end{equation*}
Several snapshots of the numerical solution are presented in Figure \ref{fig11}. As an explicit exact solution is not available for the present test case, the numerical solution on a sufficiently refined mesh is taken as the reference solution for assessing the discretization error and convergence order.
As shown in Figure \ref{fig_Ex5_err}, we compute the numerical errors corresponding to mesh resolutions of 8$\times$8, 16$\times$16, 32$\times$32,  64$\times$64, and 128$\times$128 respectively. It is observed that the numerical solution remains convergent, and the computed convergence rates achieve the optimal orders.

\begin{figure}[!ht]
\begin{center}
\includegraphics[width=0.45\textwidth] {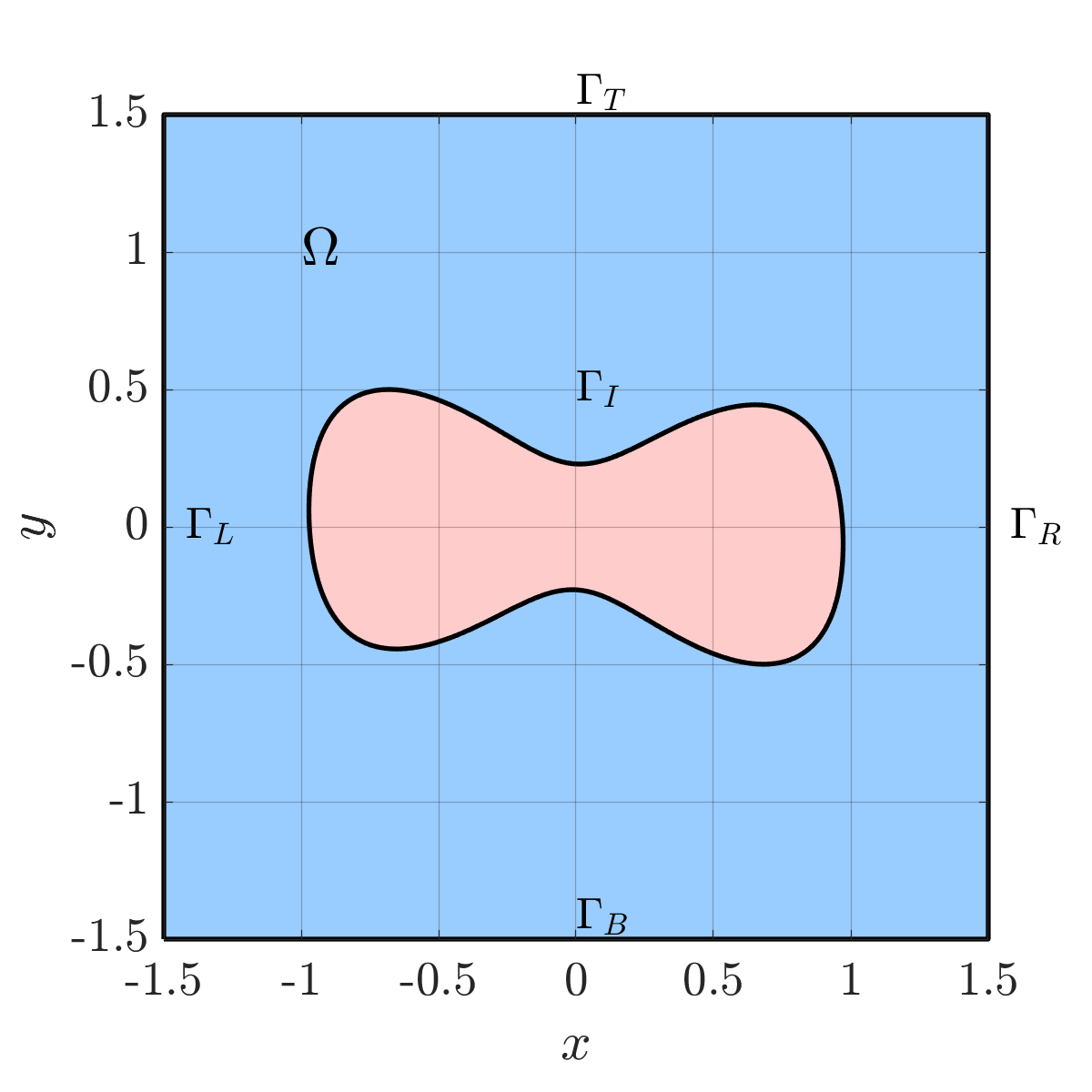}
\end{center}
\caption{Example \ref{ex5}: Uniform Cartesian background mesh over $[-1.5,1.5]^2$; the computational domain $\Omega$ is the blue region ($\phi<0$), with internal boundary $\Gamma_I$ (Neumann) and outer boundaries $\Gamma_L,\Gamma_R,\Gamma_T,\Gamma_B$ (Dirichlet).}
\label{fig10}
\end{figure}

\begin{figure}[!ht]
\begin{center}
\includegraphics[width=0.24\textwidth,trim={2.2cm 0.4cm 2.2cm 0.4cm}, clip=true]{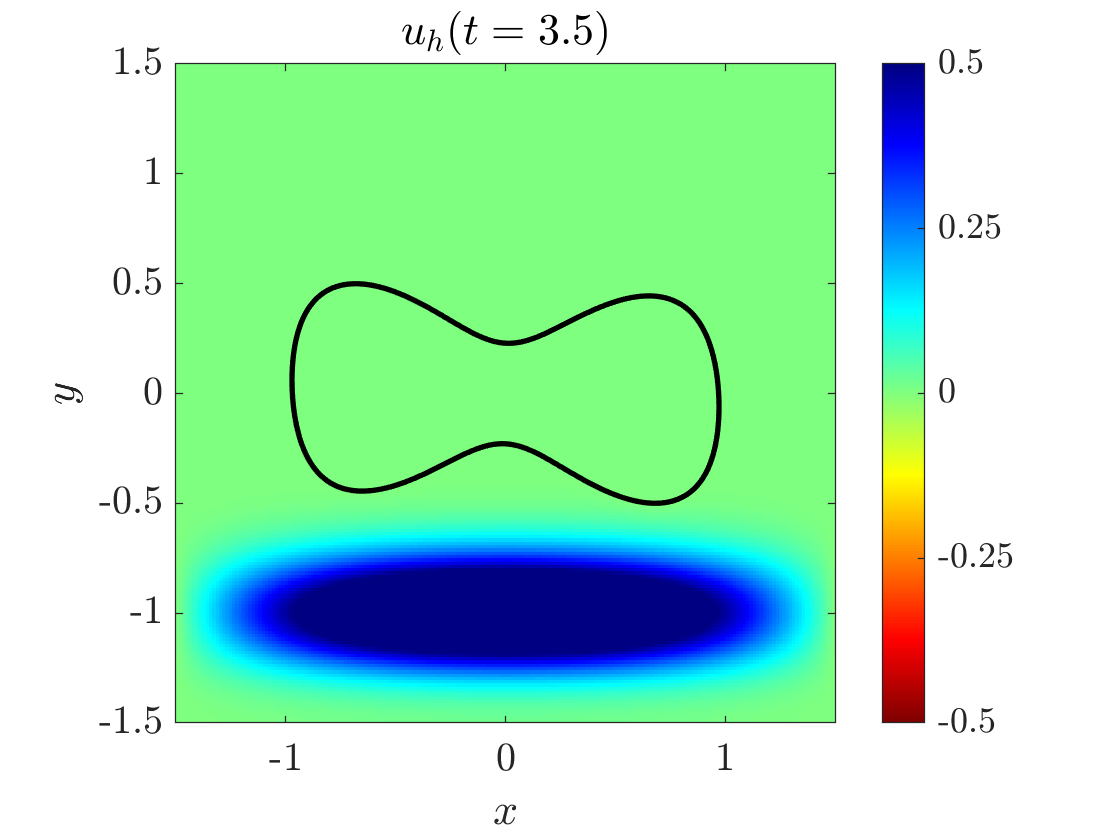}
\includegraphics[width=0.24\textwidth,trim={2.2cm 0.4cm 2.2cm 0.4cm}, clip=true]{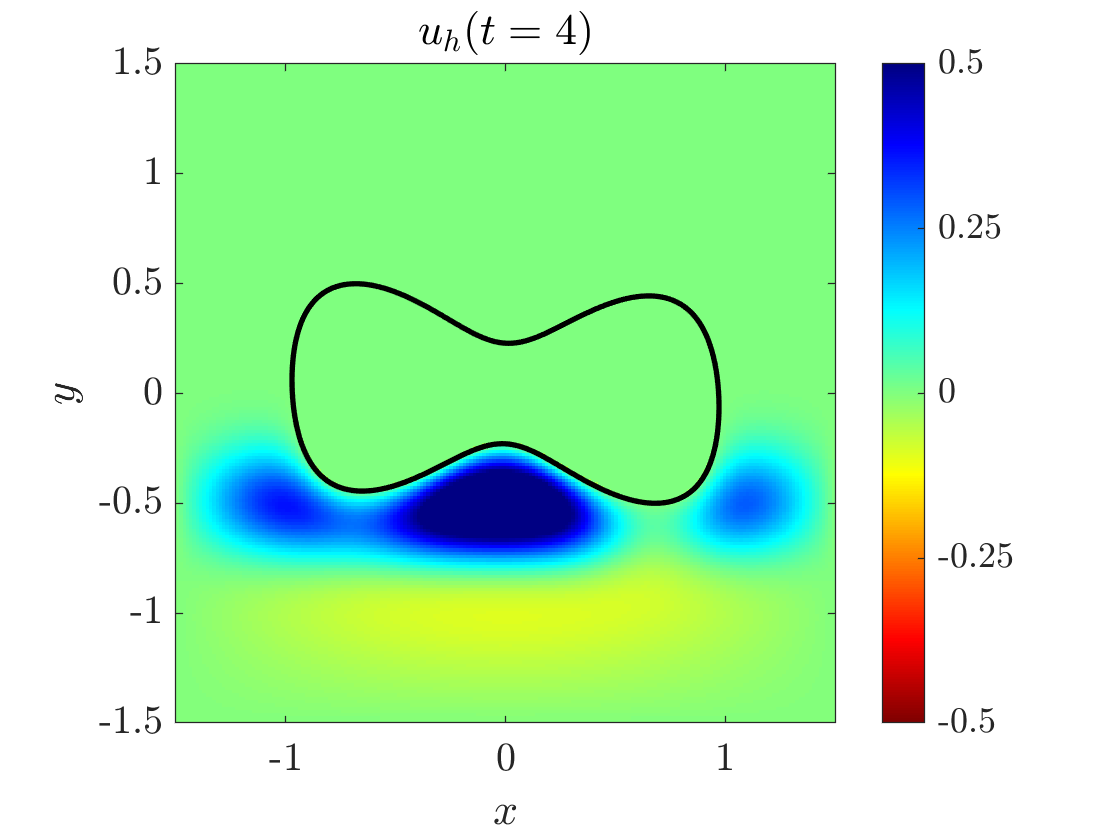}
\includegraphics[width=0.24\textwidth,trim={2.2cm 0.4cm 2.2cm 0.4cm}, clip=true]{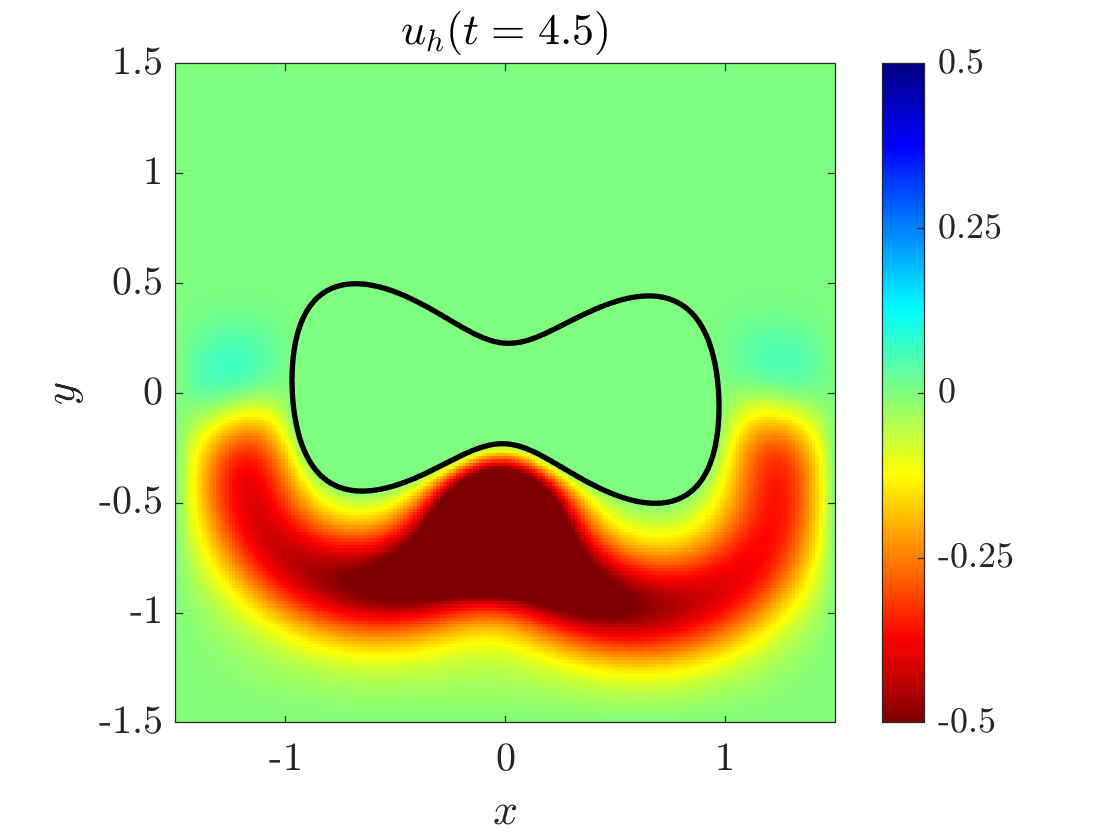}
\includegraphics[width=0.24\textwidth,trim={2.2cm 0.4cm 2.2cm 0.4cm}, clip=true]{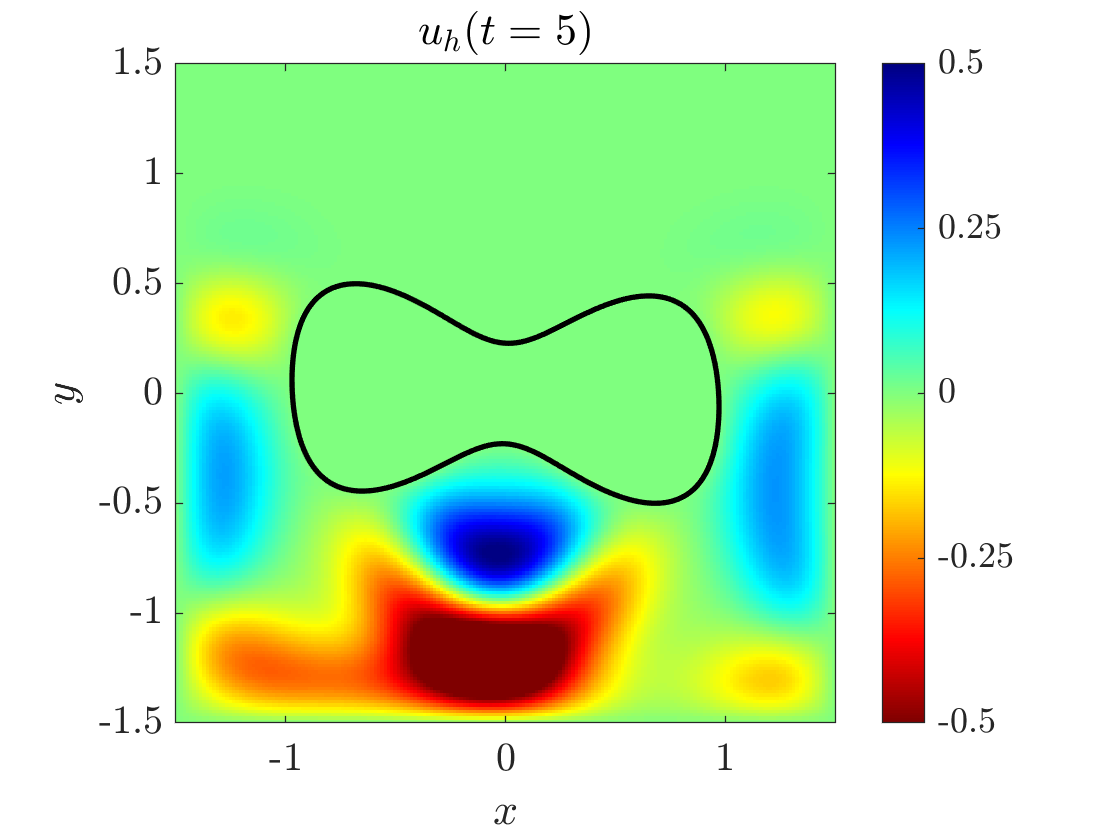}\\
\includegraphics[width=0.24\textwidth,trim={2.2cm 0.4cm 2.2cm 0.4cm}, clip=true]{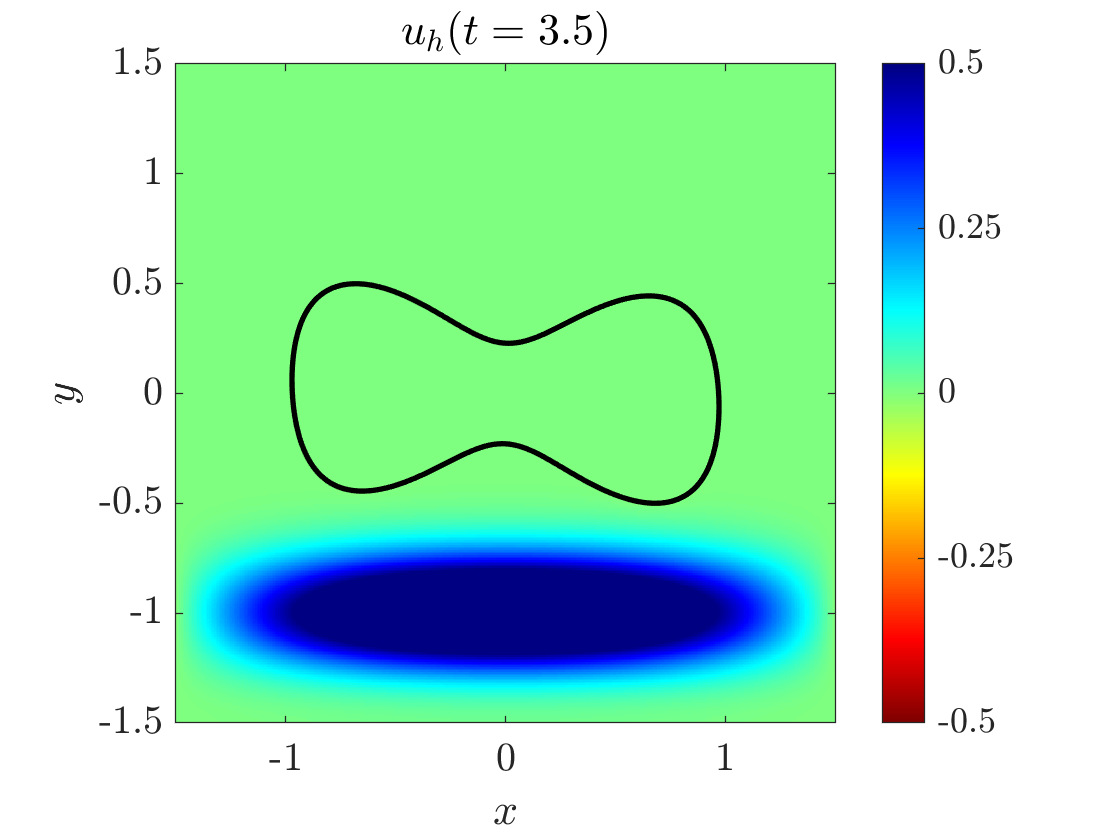}
\includegraphics[width=0.24\textwidth,trim={2.2cm 0.4cm 2.2cm 0.4cm}, clip=true]{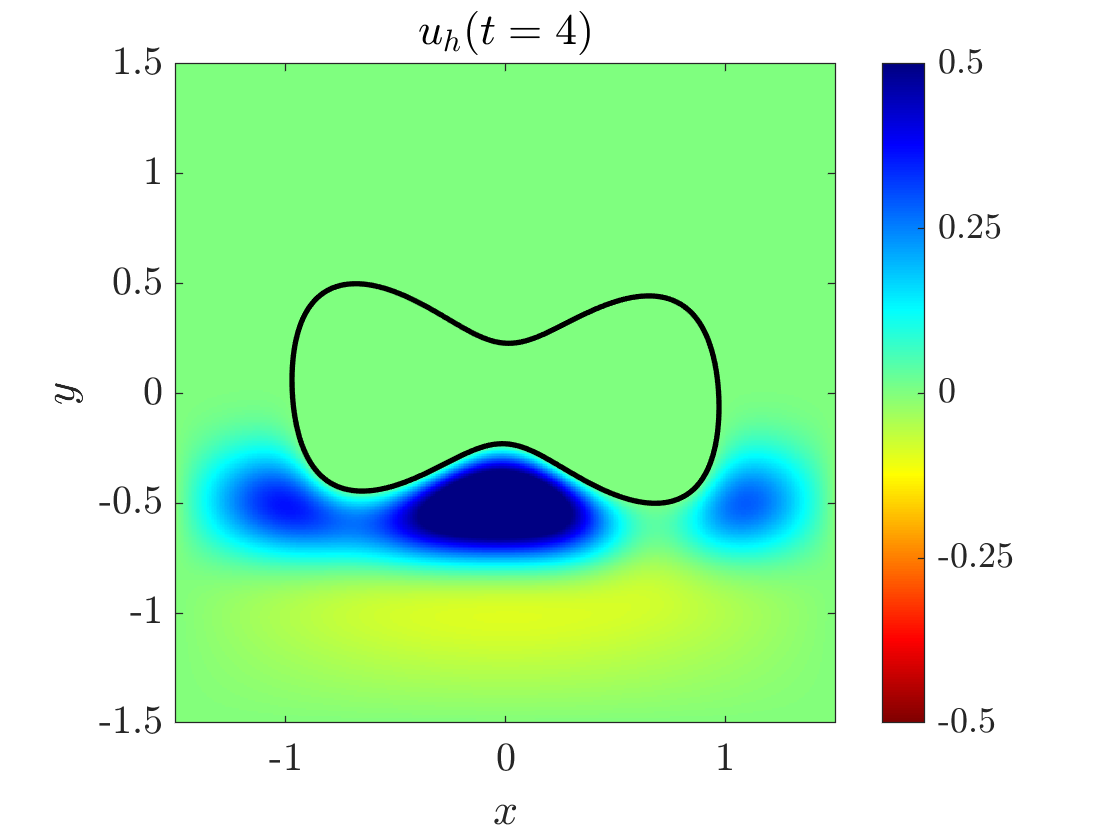}
\includegraphics[width=0.24\textwidth,trim={2.2cm 0.4cm 2.2cm 0.4cm}, clip=true]{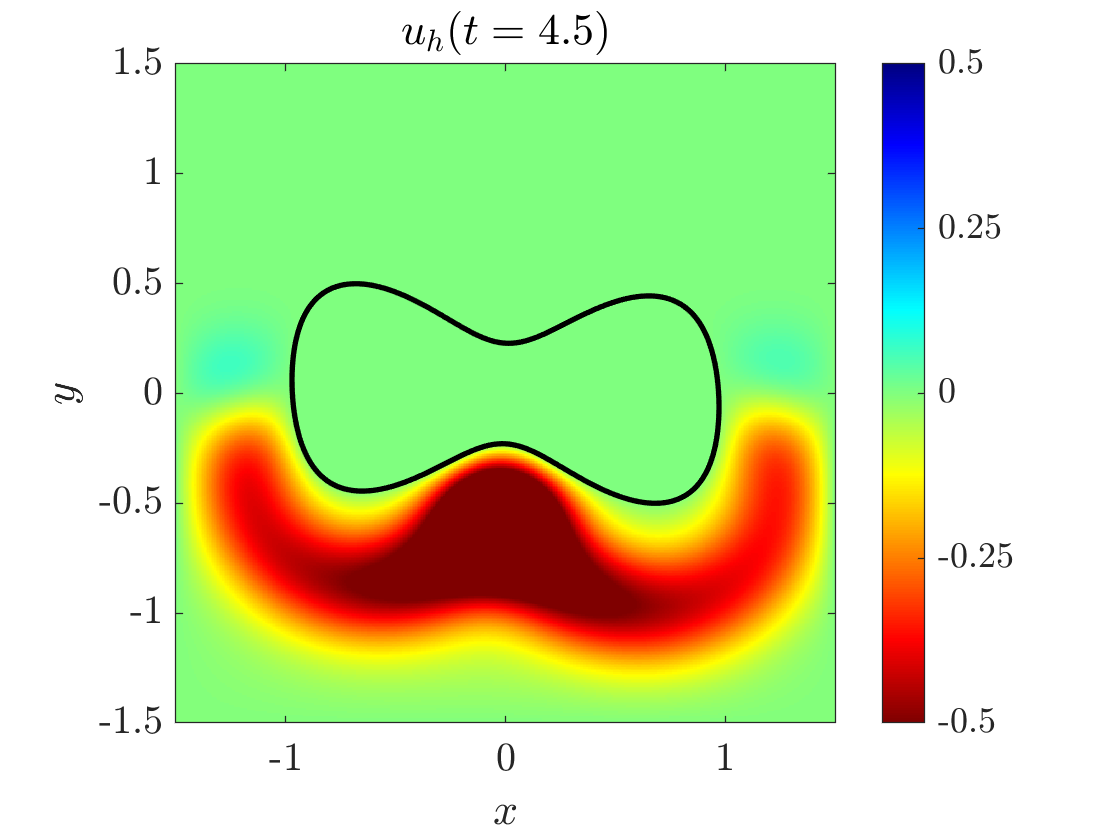}
\includegraphics[width=0.24\textwidth,trim={2.2cm 0.4cm 2.2cm 0.4cm}, clip=true]{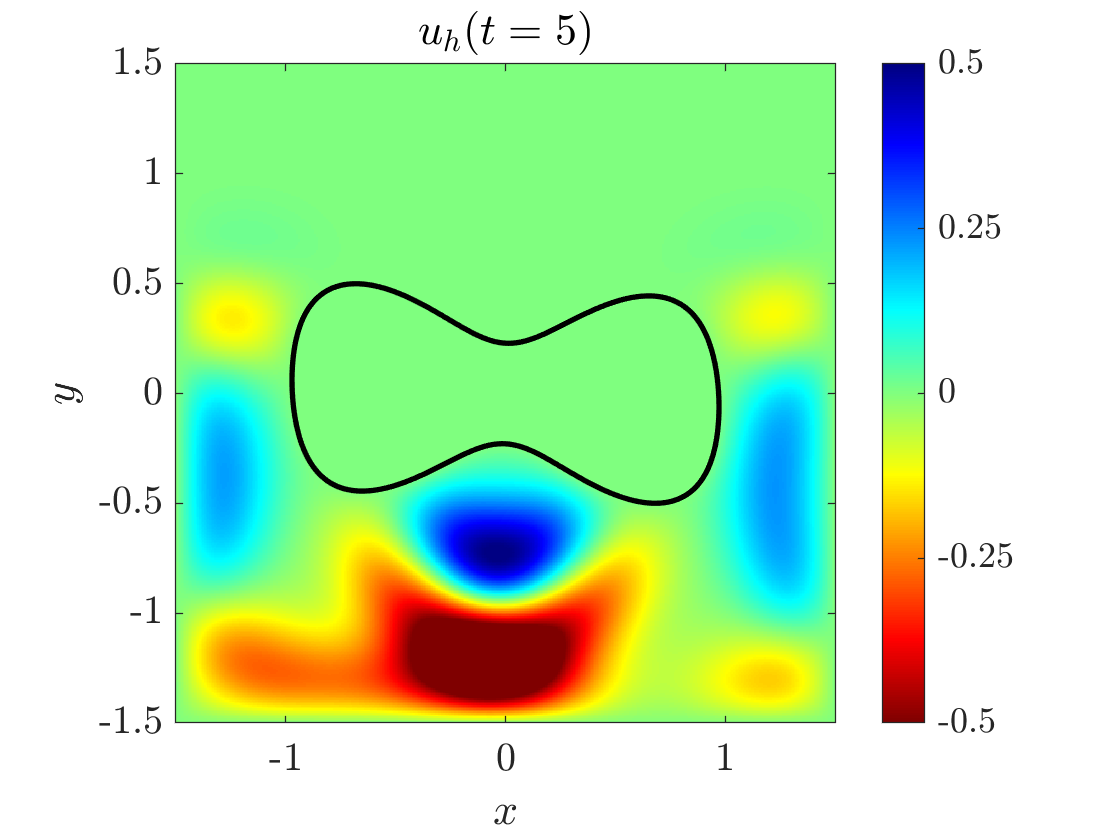}
\end{center}
\caption{Example \ref{ex5}: Snapshots at $t = 3.5,4,4.5,5$ with polynomial degree $p = 2, q = 1$. \textbf{top}: mesh number $N = 200$; \textbf{bottom}: mesh number $N = 400$.}
\label{fig11}
\end{figure}

\begin{figure}[!ht]
\begin{center}
\includegraphics[width=3.5in]{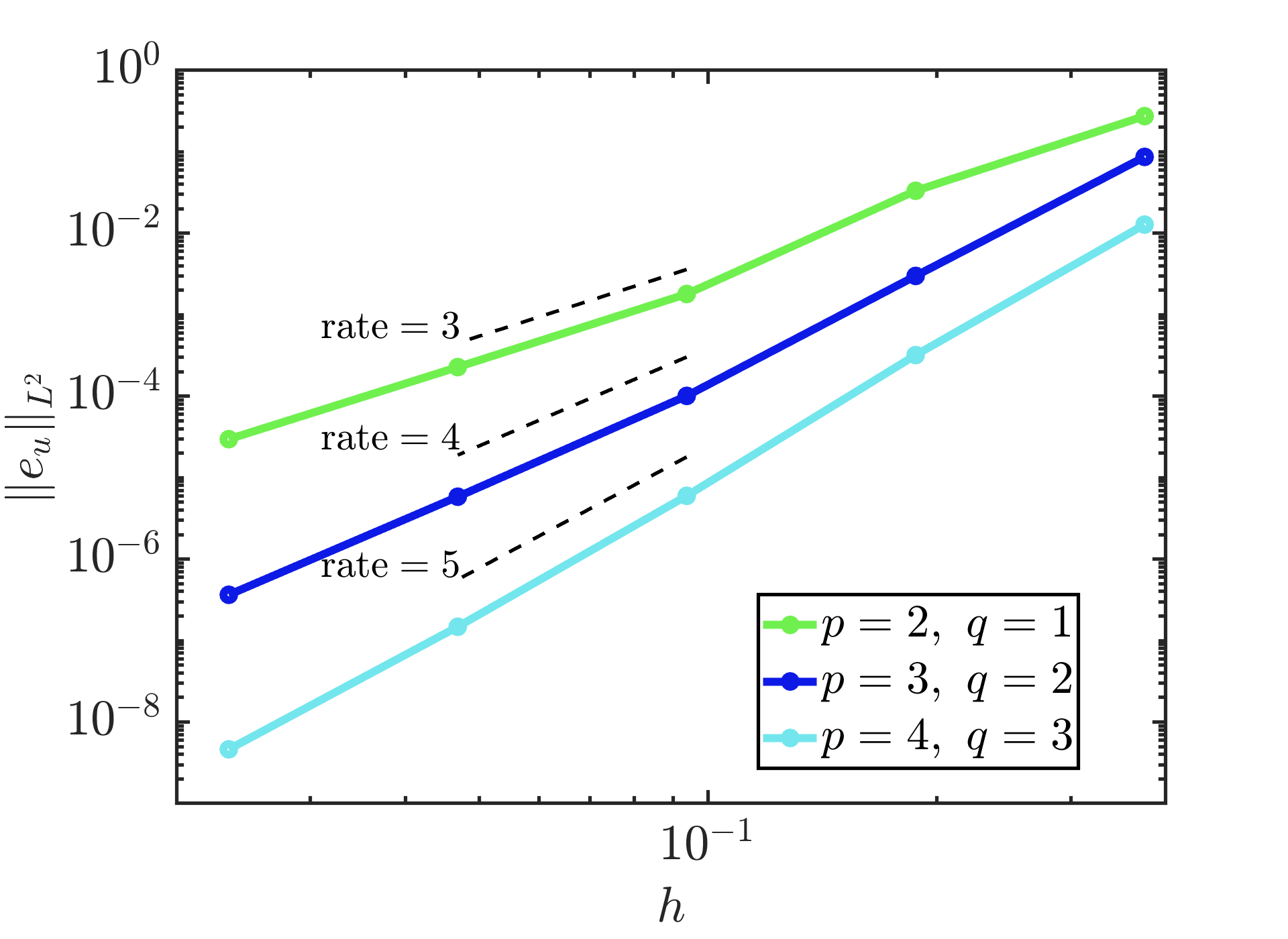}
\end{center}
\caption{Example \ref{ex5}: $L^2$ errors and convergence orders for $u_h$ under mesh refinement with different $(p,q)$.}
\label{fig_Ex5_err}
\end{figure}

\newpage
%%%%%%%%%%%%%%%%%%%
\begin{example}\label{ex6} Interface Problem 
\end{example}
Finally, we consider the following initial-boundary-value problem for the wave equation on a partitioned domain:
\begin{equation*}
\begin{cases}
u_{tt} - c_1^2 \Delta u = 0, & (x,y) \in \mathbb{R}_L^2, \quad t>0, \\
v_{tt} - c_2^2 \Delta v = 0, & (x,y) \in \mathbb{R}_R^2, \quad t>0, \\
u - v = 0, & x=0, \quad t>0, \\
c_1^2 u_x - c_2^2 v_x = 0, & x=0, \quad t>0.
\end{cases}
\end{equation*}
where $\mathbb{R}_L^2 = \{(x,y):x<0\}$ and $\mathbb{R}_R^2 = \{(x,y):x>0\}$ denote the left and right half-planes, respectively. The interface conditions at $x = 0$ enforce continuity of the solution and conservation of the normal flux, which model transmission and reflection across the material interface.
The exact solution to this problem is given by
\begin{equation*}
\begin{cases}
u = \cos(x + y - \sqrt{2}c_1 t) + k_2 \cos(x - y + \sqrt{2}c_1 t), \\
v = (1 + k_2) \cos(k_1 x + y - \sqrt{2}c_1 t),
\end{cases}
\end{equation*}
with the parameters 
$$k_1 = \sqrt{\dfrac{2c_1^2}{c_2^2} -1},\qquad
k_2 = \dfrac{c_1^2 - c_2^2k_1}{c_1^2 + c_2^2k_1}.$$
In our numerical experiments, we set $c_1 = 1$ and $c_2 = 0.5$. This choice makes the wavenumber in the right half plane twice as large as that in the left half plane, which can be seen in the plot of the exact solution at time $t = 0$ in Figure \ref{fig12}. We restrict the domain to $[-10,10]\times[0,10]$, impose Dirichlet boundary conditions at all outer boundaries, and use the exact solution to obtain the initial and boundary data. As shown in Figure \ref{fig14}, 
it is observed that the numerical solutions approach the exact solution monotonically as the mesh is refined, confirming the convergence of our method.
Figure \ref{fig16} reports the errors and convergence orders obtained with various polynomial spaces at the final time $t = 2$. The results again demonstrate that the scheme achieves the optimal convergence rate predicted by theory.

\begin{figure}[!ht]
\begin{center}
\includegraphics[width=3.5in]{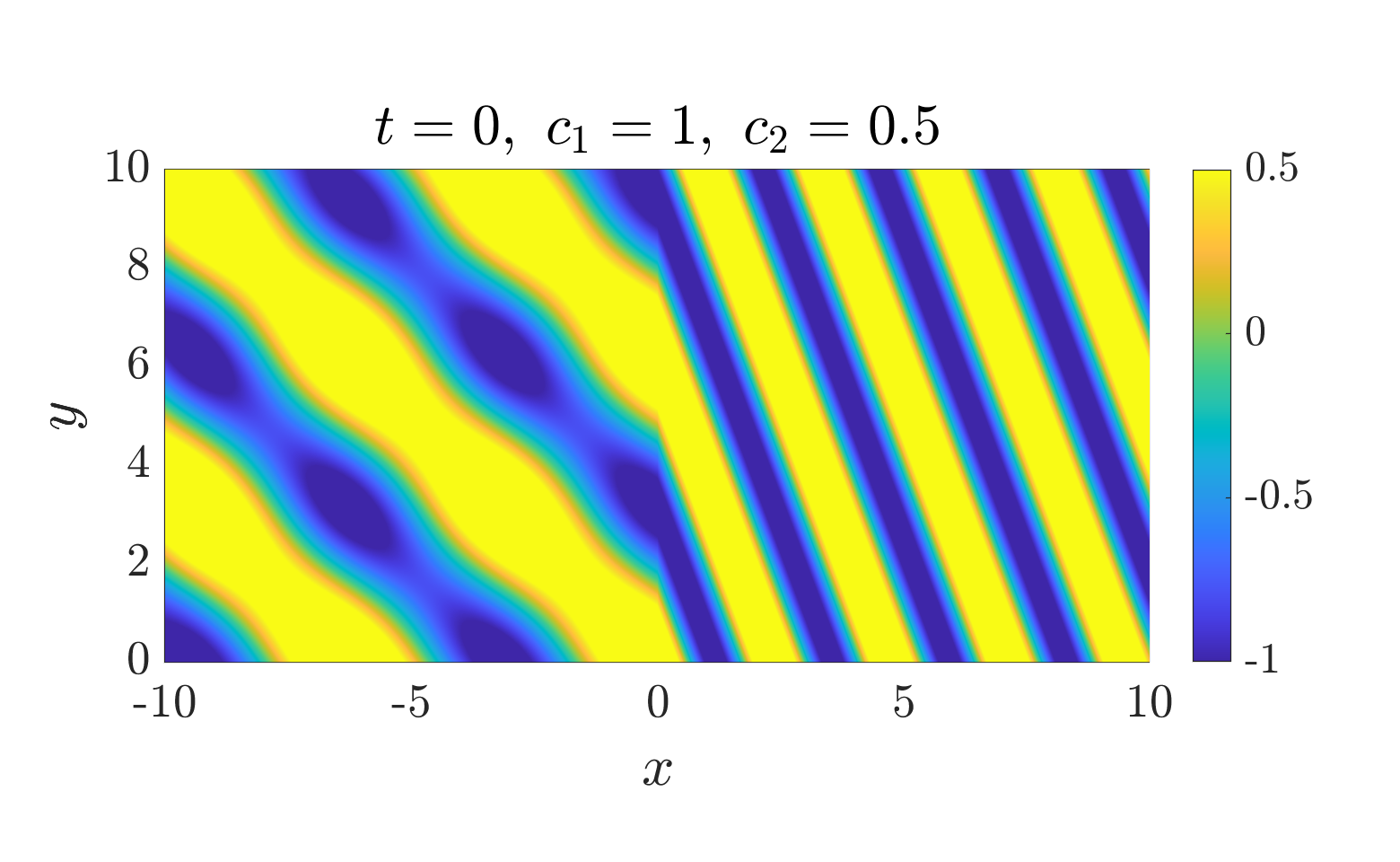}
\end{center}
\caption{Example \ref{ex6}: Initial condition at $t=0$ on the truncated domain $[-10,10]\times[0,10]$. The exact solution is plotted with $c_1=1$ and $c_2=0.5$.}
\label{fig12}
\end{figure}

\begin{figure}[!ht]
\begin{center}
\includegraphics[width=3.5in]{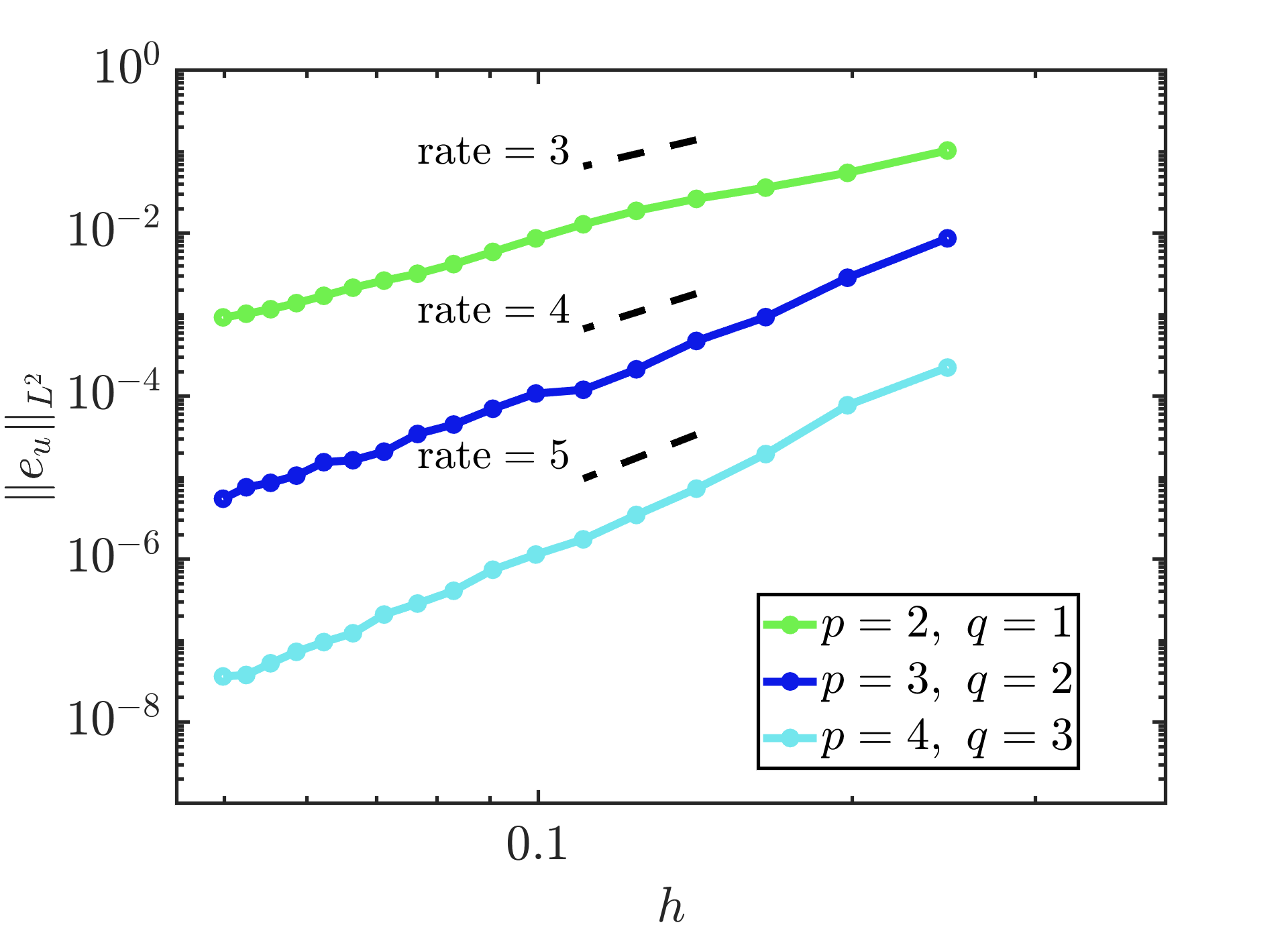}
\end{center}
\caption{Example \ref{ex6}: Error convergence with respect to the mesh size $h$ for various polynomial spaces.}
\label{fig16}
\end{figure}

\begin{figure}[!ht]
\begin{center}
\includegraphics[width=0.45\textwidth]{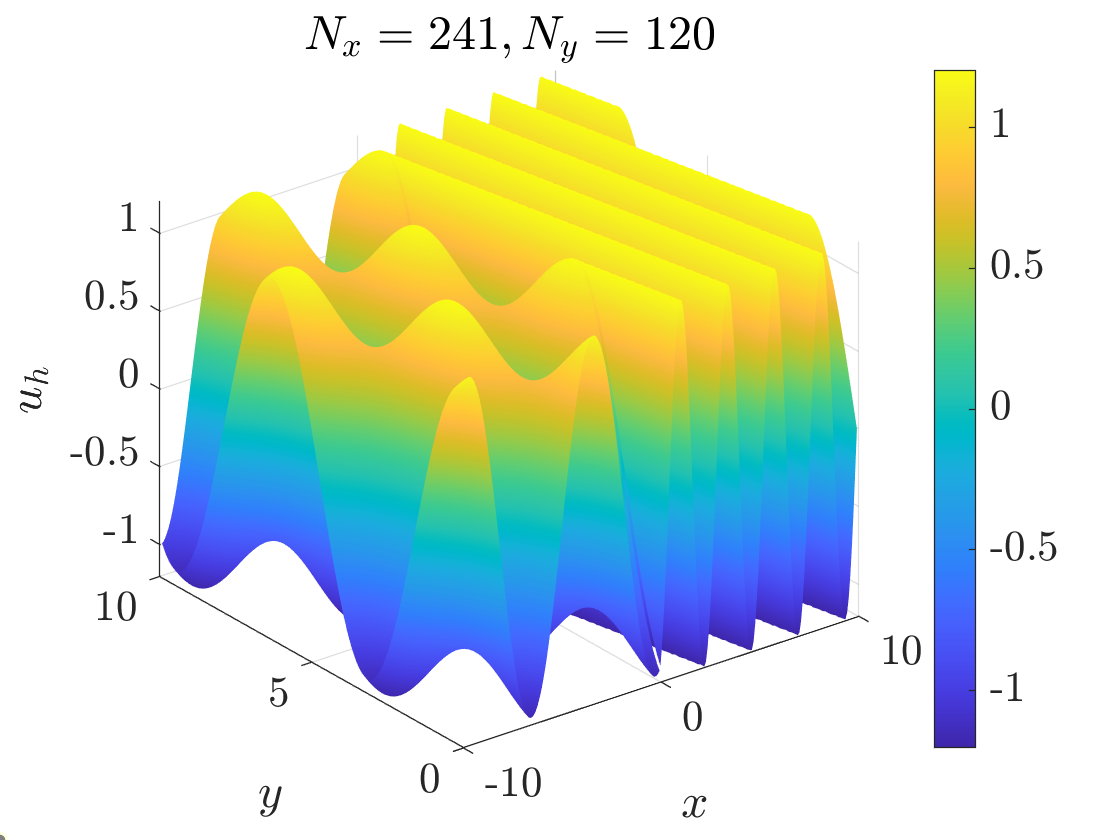} \qquad 
\includegraphics[width=0.45\textwidth]{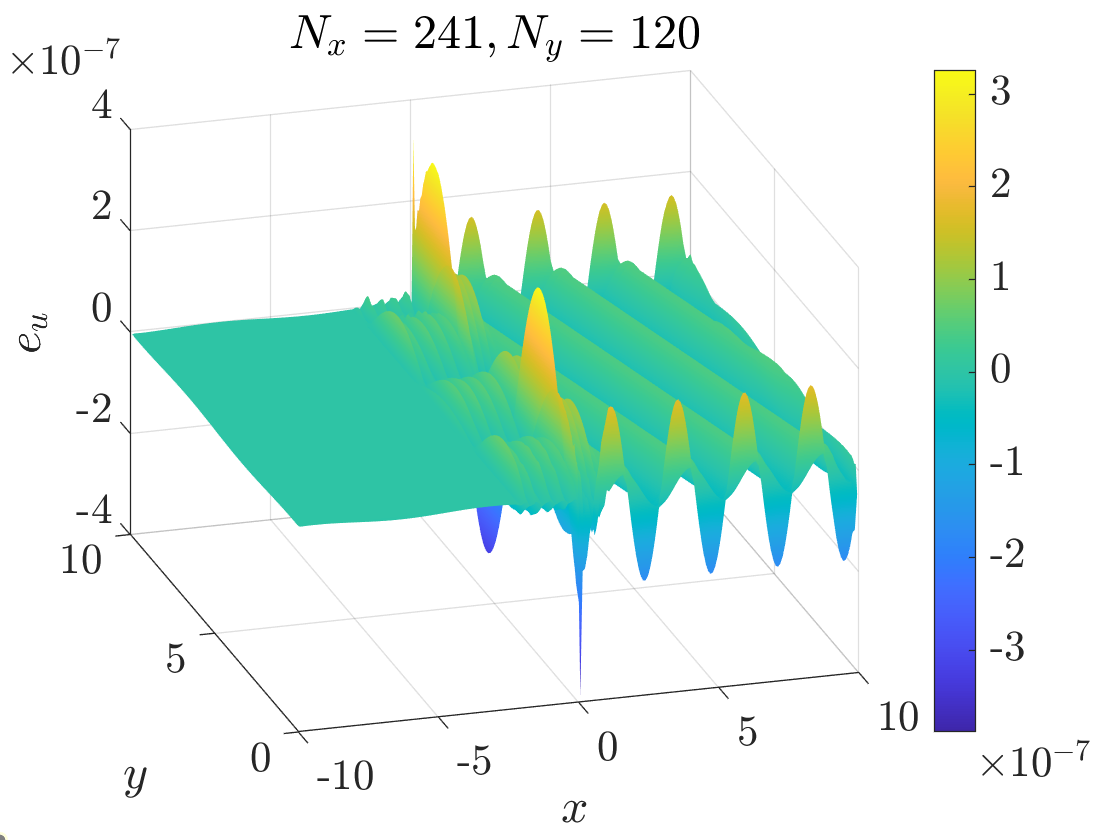}\\
\includegraphics[width=0.45\textwidth]{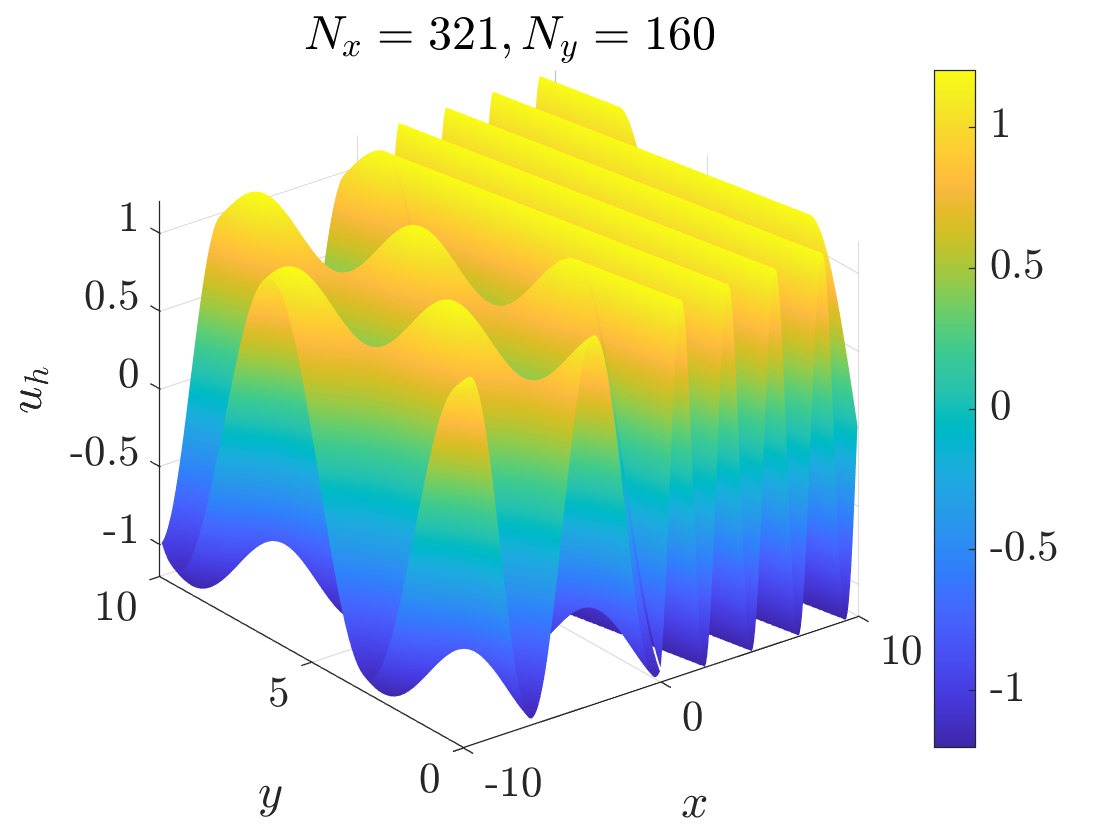}\qquad 
\includegraphics[width=0.45\textwidth]{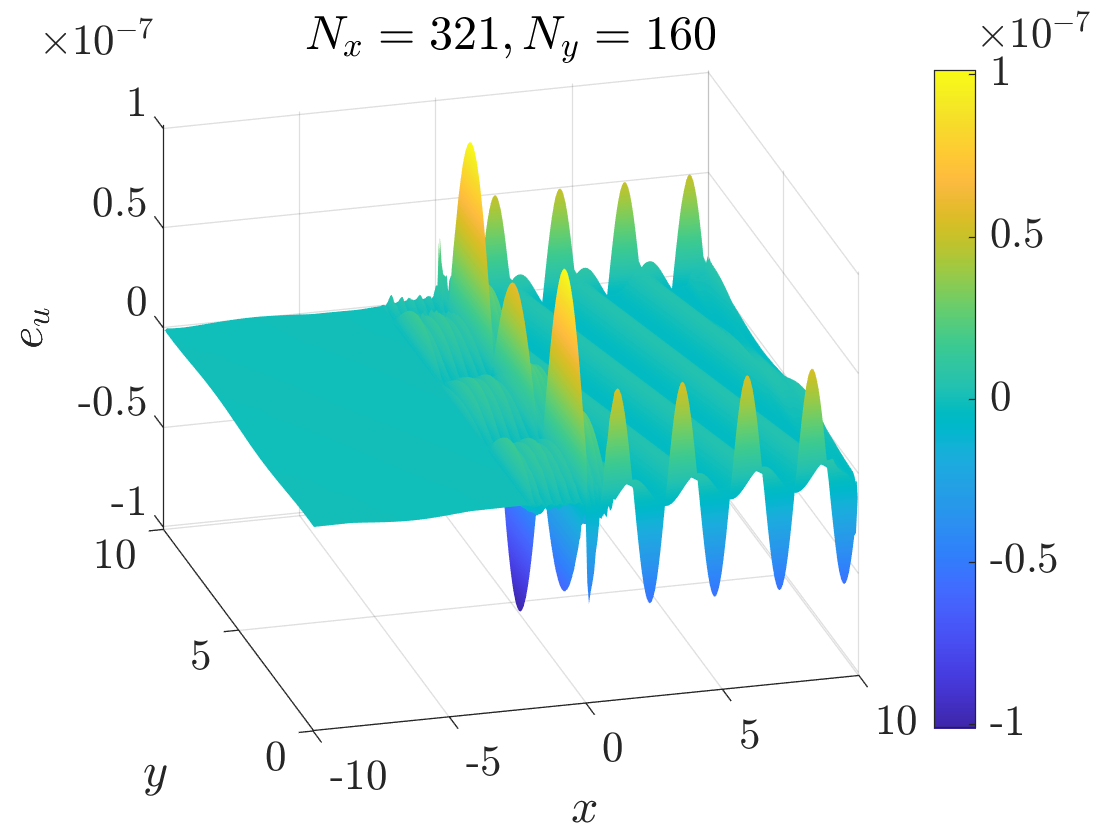}
\end{center}
\caption{Example \ref{ex6}: The numerical solution (\textbf{left}) and its corresponding errors (\textbf{right}) with $p = 4, q = 3$ and $t = 2$, mesh number $N_x = 241, 321$ and $N_y = 120, 160.$}
\label{fig14}
\end{figure}

\section{Conclusion}\label{sec:concluding remarks}

In this work, we developed a high-order CutEDG method for acoustic wave equations on unfitted meshes. By introducing ghost-penalty stabilization directly into the local bilinear forms defining the discrete EDG energy, the method retains the original flux structure while restoring robustness with respect to arbitrarily small cut cells. We established semidiscrete energy stability and high-order error estimates, and extended the formulation to stationary interface problems with discontinuous wave speeds. Numerical experiments confirmed the expected convergence rates, cut-independent conditioning, and an explicit time-step restriction governed by the background mesh size rather than the smallest physical cut cell. These results demonstrate the potential of CutEDG as a robust and accurate framework for high-order wave propagation in geometrically complex media.

\bibliographystyle{abbrv}
\bibliography{refs}

\end{document}